\newtheorem{theorem}{Theorem}
\newtheorem{remark}[theorem]{Remark}
\newtheorem{lemma}[theorem]{Lemma}
\newtheorem{proposition}[theorem]{Proposition}
\newtheorem{corollary}[theorem]{Corollary}
\newtheorem{definition}[theorem]{Definition}
\newcommand{\wto}{ \ \stackrel{w} {\longrightarrow} \ }
\DeclareMathOperator*{\Ss}{S}
\newcommand*\diff{\mathrm{d}}
\DeclareMathOperator*{\graph}{Gr}              %
\DeclareMathOperator*{\divergenz}{div}              %
\newcommand{\N}{\mathbb{N}}
\newcommand{\R}{\mathbb{R}}
\newcommand{\WH}{W^{1, \mathcal{H}}(\Omega)}
\newcommand{\Lp}[1]{L^{#1}(\Omega)}
\newcommand{\Lprand}[1]{L^{#1}(\partial\Omega)}
\newcommand{\Wp}[1]{W^{1,#1}(\Omega)}
\newcommand{\into}{\int_{\Omega}}
\newcommand{\Linf}{L^{\infty}(\Omega)}
\renewcommand{\l}{\left}
\renewcommand{\r}{\right}
\numberwithin{theorem}{section}
\numberwithin{equation}{section}
\title[Double phase implicit obstacle problems]{Double phase implicit obstacle problems with  convection  and multivalued mixed boundary value conditions}
\author[S. Zeng]{Shengda Zeng}
\address[S. Zeng]{Guangxi Colleges and Universities Key Laboratory of Complex System Optimization and Big Data Processing, Yulin Normal University, Yulin 537000, Guangxi, P.R. China, and Jagiellonian University in Krakow,
Faculty of Mathematics and Computer Science, ul. Lojasiewicza 6, 30-348 Krakow, Poland}
\email{zengshengda@163.com}
\author[V.D. R\u{a}dulescu]{Vicen\c{t}iu D. R\u{a}dulescu}
\address[V.D. R\u{a}dulescu]{Faculty of Applied Mathematics, AGH University of Science and Technology, 30-059 Krak\'ow, Poland \& Department of Mathematics, University of Craiova, 200585 Craiova, Romania}
\email[Corresponding author]{radulescu@inf.ucv.ro}
\author[P. Winkert]{Patrick Winkert}
\address[P. Winkert]{Technische Universit\"{a}t Berlin, Institut f\"{u}r Mathematik, Stra\ss e des 17.\,Juni 136, 10623 Berlin, Germany}
\email{winkert@math.tu-berlin.de}
\subjclass{35J20, 35J25, 35J60}
\keywords{Clarke's generalized gradient, convection term, convex subdifferential, double phase problem, existence results, implicit obstacle, Kakutani-Ky Fan fixed point theorem, mixed boundary conditions, multivalued mapping.}
\begin{document}

\begin{abstract}
In this paper we consider a mixed boundary value problem with a nonhomogeneous, nonlinear differential operator (called double phase operator), a nonlinear convection term (a reaction term depending on the gradient), three multivalued terms and an implicit obstacle constraint. Under very general assumptions on the data, we prove that the solution set of such implicit obstacle problem is nonempty (so there is at least one solution) and weakly compact. The proof of our main result uses the Kakutani-Ky Fan fixed point theorem for multivalued operators along with the theory of nonsmooth analysis and variational methods for pseudomonotone operators.
\end{abstract}
	
\maketitle
	
\section{Introduction}
Obstacle problems go back to the pioneering work by Stefan \cite{Stefan} who studied the temperature distribution in a homogeneous medium undergoing a phase change, typically a body of ice at zero degrees centigrade submerged in water. Obstacle problems are also commonly used in physics, biology, and financial mathematics.
Some relevant examples include the dam problem, the Hele-Shaw flow, pricing of American options, quadrature domains, random matrices, etc.

This paper is devoted to the study of a nonlinear partial differential system with a nonlinear convection term, three multivalued terms and an implicit obstacle effect. To this end,  let $\Omega$ be  a bounded domain in $\R^N$, $N\ge 2$, such that its boundary $\Gamma:=\partial \Omega$ is Lipschitz continuous and it is divided into three mutually disjoint parts $\Gamma_1$, $\Gamma_2$ and $\Gamma_3$ with $\Gamma_1$ having positive Lebesgue measure.  It should be pointed out that in our setting the parts $\Gamma_2$ and $\Gamma_3$ can be empty, i.e., $\Gamma_1$ could be the whole boundary $\Gamma_1=\Gamma$. Moreover,  let $1<p<q<N$ and let $\mu\colon\overline \Omega\to[0,\infty)$ be a given function, $U_1\colon \Omega\times \R\to 2^\R$, $U_2\colon \Gamma_2\times \R\to 2^\R$ be two multivalued mappings, $\phi\colon \Gamma_3\times \R\to \R$ be a convex function with respect to the second argument and $f\colon \Omega\times \R\times \R^N\to \R$ be a nonlinear convection function. We study the following problem
\begin{equation}\label{eqn1}
	\begin{aligned}
		-\divergenz\big(|\nabla u|^{p-2}\nabla u+\mu(x)|\nabla u|^{q-2}\nabla u\big)&\quad&&\\
		+|u|^{p-2}u+\mu(x)|u|^{q-2}u&\in U_1(x,u)+f(x,u,\nabla u)  \quad && \text{in } \Omega,\\
		u & = 0 &&\text{on } \Gamma_1,\\
		\frac{\partial u}{\partial \nu_a}&\in U_2(x,u) &&\text{on } \Gamma_2,\\
		-\frac{\partial u}{\partial \nu_a}&\in \partial_c\phi(x,u) &&\text{on } \Gamma_3,\\
		L(u) &\leq J(u),
	\end{aligned}
\end{equation}
where
\begin{align*}
	\frac{\partial u}{\partial \nu_a}:=\left(|\nabla u|^{p-2}\nabla u+\mu(x) |\nabla u|^{q-2}\nabla u\right) \cdot \nu,
\end{align*}
with $\nu$ being the unit normal vector on $\Gamma$, $\partial_c\phi(x,u)$ is the convex subdifferential of $s\mapsto \phi(x,s)$, and $L,J\colon W^{1,\mathcal H}(\Omega)\to \R $ are given functions defined on the Musielak-Orlicz Sobolev space $\WH$, see Section \ref{Section2} for its precise definition.

We point out that problem \eqref{eqn1} combines several interesting phenomena. First
the differential operator involved is the so-called double phase operator which is given by
\begin{align}\label{operator_double_phase}
	\divergenz \big(|\nabla u|^{p-2} \nabla u+ \mu(x) |\nabla u|^{q-2} \nabla u\big)\quad \text{for }u\in \WH.
\end{align}
The corresponding energy functional related to \eqref{operator_double_phase} is given by
\begin{align}\label{integral_minimizer}
	\omega \mapsto \int_\Omega \big(|\nabla  \omega|^p+\mu(x)|\nabla  \omega|^q\big)\,\diff x,
\end{align}
and was initially introduced by Zhikov \cite{Zhikov-1986} in 1986. Such functional was used to describe models for strongly anisotropic materials and it also demonstrated its importance in the study of duality theory as well as in the context
of the Lavrentiev phenomenon, see Zhikov \cite{Zhikov-1995}. The main feature of the variational integral \eqref{integral_minimizer} is the fact that the energy density changes its
ellipticity and growth properties according to the point in the domain. Indeed, the energy density of \eqref{integral_minimizer} exhibits ellipticity in the gradient of order $q$ on the points $x$ where $\mu(x)$ is positive and of order $p$ on the points $x$ where $\mu(x)$ vanishes. In general, double phase differential operators and corresponding energy functionals interpret various comprehensive natural phenomena, and model several problems in Mechanics, Physics and Engineering
Sciences. For instance, in the elasticity theory, the modulating coefficient $\mu(\cdot)$ dictates the geometry of composites made of two different materials with distinct power hardening exponents $p$ and $q$, see Zhikov \cite{Zhikov-2011}.
Further, we mention some famous results in the regularity theory of local minimizers of \eqref{integral_minimizer}, see, for example, the papers of Baroni-Colombo-Mingione \cite{Baroni-Colombo-Mingione-2015,Baroni-Colombo-Mingione-2018}, Colombo-Mingione \cite{Colombo-Mingione-2015a,Colombo-Mingione-2015b} and  Marcellini \cite{Marcellini-1991,Marcellini-1989b}.

A second interesting phenomenon is the appearance of a nonlinearity that depends on the gradient of the solution. Such terms are said to be convection functions. The difficulty in the study of such terms is their nonvariational character, that is, the standard variational tools to corresponding energy functionals cannot be applied. In the past years several interesting works have been published with convection terms, we refer to the papers of de Araujo-Faria \cite{de-Araujo-Faria-2019}, El Manouni-Marino-Winkert \cite{El-Manouni-Marino-Winkert-2022},  Faraci-Motreanu-Puglisi \cite{Faraci-Motreanu-Puglisi-2015}, Faraci-Puglisi \cite{Faraci-Puglisi-2016}, Figueiredo-Madeira \cite{Figueiredo-Madeira-2021}, Gasi\'nski-Papageorgiou \cite{Gasinski-Papageorgiou-2017}, Liu-Motreanu-Zeng \cite{Liu-Motreanu-Zeng-2019}, Marano-Winkert \cite{Marano-Winkert-2019}, Papageorgiou-R\u{a}\-du\-les\-cu-Repov\v{s} \cite{Papageorgiou-Radulescu-Repovs-2020}, and Pucci-Temperini \cite{pucci}.

Further interesting phenomena are the combination of an implicit obstacle effect along with mixed boundary conditions in a very general setting and the appearance of multivalued mappings (which include as special case Clarke's generalized gradients) and convex subdifferentials. We point out that in several critical situations arising in
engineering and economic models, such as Nash equilibrium problems with shared
constraints, semipermeability problems with free boundary conditions, and transport
route optimization with feedback control, the constraint conditions, usually,
depend explicitly on the unknown solution. On the other hand, the theory of (variational-)hemivariational inequalities was originally developed by Panagiotopoulos \cite{Panagiotopoulos-1985,Panagiotopoulos-1993} (see also Naniewicz-Panagiotopoulos \cite{Naniewicz-Panagiotopoulos-1995}) in order
to study the nonsmooth mechanical problems. The main feature in hemivariational
inequalities is to remove the hypotheses on differentiability and convexity
of energy functionals with the help of generalized subgradients introduced by Clarke \cite{Clarke-1983}. Nowadays such problems with implicit obstacle effect and generalized gradient for homogeneous Dirichlet problems have been considered by many authors, see, for example the works of Alleche-R\u{a}dulescu \cite{Alleche-Radulescu-2016}, Aussel-Sultana-Vetrivel \cite{Aussel-Sultana-Vetrivel-2016}, Bonanno-Motreanu-Winkert \cite{Bonanno-Motreanu-Winkert-2011}, Carl-Winkert \cite{Carl-Winkert-2009}, Mig\'{o}rski-Khan-Zeng \cite{Migorski-Khan-Zeng-2020,Migorski-Khan-Zeng-2019}, Gwinner \cite{Gwinner-2018}, Zeng-Mig\'{o}rski-Khan \cite{Zeng-Migorski-Khan-2021} and the references therein. We also refer to the recent monograph of Carl-Le \cite{Carl-Le-2021} about multivalued variational inequalities and inclusions.

Let us comment on some relevant special cases of problem \eqref{eqn1}.

\begin{enumerate}
	\item[\textnormal{(i)}]
	Let $j_1\colon \Omega\times\R\to \R$ and $j_2\colon \Gamma_2\times \R\to \R$ be such that $x\mapsto j(x,s)$ and $x\mapsto j_2(x,s)$ are measurable in $\Omega$ and on $\Gamma_2$, respectively, and $s\mapsto j_1(x,s)$ and $s\mapsto j_2(x,s)$ are both locally Lipschitz continuous. Moreover, let $r_1\colon \R\to \R$ and $r_2\colon \R\to \R$ be two functions and denote by  $\partial j_i$  Clarke's generalized gradient of $j_i$ (with respect to the second variable) for $i=1,2$. If $U_1$ and $U_2$ are defined by $U_1(x,s)=r_1(s)\partial j_1(x,s)$ for a.\,a.\,$x\in \Omega$, $s\in\R$ and $U_2(x,s)=r_2(s)\partial j_2(x,s)$ for a.\,a.\,$x\in\Gamma_2$, $s\in\R$, then problem \eqref{eqn1} becomes
	\begin{equation}\label{eqns2}
		\begin{aligned}
			-\divergenz\big(|\nabla u|^{p-2}\nabla u+\mu(x)|\nabla u|^{q-2}\nabla u\big)&\quad&&\\
			+|u|^{p-2}u+\mu(x)|u|^{q-2}u\in& r_1(u)\partial j_1(x,u)+f(x,u,\nabla u)  \quad && \text{in } \Omega,\\
			u  = &0 &&\text{on } \Gamma_1,\\
			\frac{\partial u}{\partial \nu_a}\in& r_2(u)\partial j_2(x,u) &&\text{on } \Gamma_2,\\
			-\frac{\partial u}{\partial \nu_a}\in& \partial_c\phi(x,u) &&\text{on } \Gamma_3.\\
			L(u) \leq& J(u).
		\end{aligned}
	\end{equation}
	As far as we know problem \eqref{eqns2} has not been studied yet. As a special of our main theorem we state in Theorem \ref{theorem-special-case} that the solution set of \eqref{eqns2} is nonempty and weakly compact.
	\item[\textnormal{(ii)}]
	If $\Gamma_2=\emptyset$ and $\Gamma_3=\emptyset$, i.e., $\Gamma_1=\Gamma$,  then problem \eqref{eqn1} reduces to the following  double phase implicit obstacle system with Dirichlet boundary condition
	\begin{equation}\label{eqns3}
		\begin{aligned}
			-\divergenz\big(|\nabla u|^{p-2}\nabla u+\mu(x)|\nabla u|^{q-2}\nabla u\big)&\quad&\\
			+|u|^{p-2}u+\mu(x)|u|^{q-2}u&\in U_1(x,u)+f(x,u,\nabla u)  \quad && \text{in } \Omega,\\
			u  &= 0 &&\text{on } \Gamma,\\
			L(u) &\leq J(u).
		\end{aligned}
	\end{equation}
	If $U_1(x,s) =-\partial j(x,s)$ with $j\colon \Omega\times \R\to \R $ being locally Lipschitz continuous with respect to the second argument, problem \eqref{eqns3} has been recently studied by Zeng-Bai-Papageorgiou-R\v adulescu~\cite{Zeng-Bai-Papageorgiou-Radulescu-2021}. Further special case when $f$ is independent of $u$ or $U_1\equiv 0$ and $J(u)=\infty$ has been treated by Zeng-Bai-Gasi\'{n}ski-Winkert \cite{Zeng-Bai-Gasinski-Winkert-2020,Zeng-Gasinski-Winkert-Bai-2021,Zeng-Bai-Gasinski-Winkert-2021}. Additionally to $U_1\equiv 0$, if we suppose $J(u)\equiv+\infty$, \eqref{eqns3} reduces to single valued equation studied by Gasi\'{n}ski-Winkert \cite{Gasinski-Winkert-2020b}. Note that the techniques used in these papers differ from ours.
	\item[\textnormal{(iii)}]
	Let $\Psi\colon \Omega\to \R$ be a given obstacle. When $J(u)\equiv0$  and $L(u): =\int_\Omega(u(x)-\Psi(x))^+\,\diff x$ for all $u\in W^{1,\mathcal H}(\Omega)$, then our problem \eqref{eqn1} can be rewritten to the following obstacle inclusion problem
	\begin{equation}\label{eqns4}
		\begin{aligned}
			-\divergenz\big(|\nabla u|^{p-2}\nabla u+\mu(x)|\nabla u|^{q-2}\nabla u\big)&\quad&&\\
			+|u|^{p-2}u+\mu(x)|u|^{q-2}u&\in U_1(x,u)+f(x,u,\nabla u)  \quad && \text{in } \Omega,\\
			u  &= 0 &&\text{on } \Gamma_1,\\
			\frac{\partial u}{\partial \nu_a}&\in U_2(x,u) &&\text{on } \Gamma_2,\\
			-\frac{\partial u}{\partial \nu_a}&\in \partial_c\phi(x,u) &&\text{on } \Gamma_3,\\
			u(x) &\leq \Psi(x)  &&\text{in } \Omega.
		\end{aligned}
	\end{equation}
	To the best of our knowledge, this special case of \eqref{eqn1} has not been considered yet. The same can be said when $\Phi\colon \Gamma\to \R$ is a given obstacle on the boundary. Then the last inequality in \eqref{eqns4} is replaced by $u(x)\leq \Phi(x)$ on $\Gamma$.
	\item[\textnormal{(iv)}]
	If  $J(u)\equiv+\infty$  for all $u\in W^{1,\mathcal H}(\Omega)$, then problem \eqref{eqn1} turns into the following mixed boundary value problem without obstacle effect
	\begin{equation}\label{eqns5}
		\begin{aligned}
			-\divergenz\big(|\nabla u|^{p-2}\nabla u+\mu(x)|\nabla u|^{q-2}\nabla u\big)&\quad&&\\
			+|u|^{p-2}u+\mu(x)|u|^{q-2}u&\in U_1(x,u)+f(x,u,\nabla u)  \quad && \text{in } \Omega,\\
			u  &= 0 &&\text{on } \Gamma_1,\\
			\frac{\partial u}{\partial \nu_a}&\in U_2(x,u) &&\text{on } \Gamma_2,\\
			-\frac{\partial u}{\partial \nu_a}&\in \partial_c\phi(x,u) &&\text{on } \Gamma_3,
		\end{aligned}
	\end{equation}
	which has not been investigated yet. As a corollary we can prove that there exists a solution of \eqref{eqns5} and the solution set of \eqref{eqns5} is weakly compact (see Corollary \ref{corollary} below).
\end{enumerate}

The main goal of the paper is to develop a general framework for determining the existence of a (weak) solution to the nonlinear double phase differential inclusion problem \eqref{eqn1} via employing the Kakutani-Ky Fan fixed point theorem for multivalued operators, the theory of nonsmooth analysis and variational methods for pseudomonotone operators. In fact, to the best of our knowledge, this is the first work which combines a double phase phenomena along with an implicit obstacle constraint,  a nonlinear convection term (i.e., a reaction term depending on the gradient) and multivalued mixed boundary conditions which include a convex subdifferential operator and an abstract multivalued function.

The features of this paper are the following:

(i) the presence of a nonhomogeneous differential operator with different isotropic growth, which generates a double phase
associated energy;

(ii) the analysis developed in this paper is concerned with the  combined effects of a nonstandard operator with unbalanced  growth, a  convection
nonlinearity, three multivalued terms, and an implicit
obstacle constraint;

(iii) the proofs rely on fixed point methods for
multivalued operators in combination with tools from nonsmooth analysis and theory of pseudomonotone operators.

To the best of our knowledge, this is the first paper dealing with multivalued double phase obstacle implicit problems with convection.

In order to give a complete picture of existence results for double phase problems, we also refer to some recent works dealing with different types of problems. In Bahrouni-R\u{a}dulescu-Repov\v{s} \cite{Bahrouni-Radulescu-Repovs-2019} and Bahrouni-R\u{a}dulescu-Winkert \cite{Bahrouni-Radulescu-Winkert-2020} the authors studied a class of double phase energy functionals arising in the theory of transonic flow which are driven by the Baouendi-Grushin operator with variable exponents. Very recently, Biagi-Esposito-Vecchi \cite{Biagi-Esposito-Vecchi-2021} considered positive singular solutions of double phase type equations and proved symmetry as well as monotonicity properties of such solutions. A new class of double phase problems with variable growth has been developed in Cencelj-R\u{a}dulescu-Repov\v{s} \cite{Cencelj-Radulescu-Repovs-2018} while Colasuonno-Squassina \cite{Colasuonno-Squassina-2016} studied existence and properties of related variational eigenvalues. A new type of so-called Finsler double phase operators has been introduced in Farkas-Winkert \cite{Farkas-Winkert-2021} in order to prove an existence result for corresponding singular problems. Furthermore, a Nehari manifold treatment for finding sign changing solutions can be found in Gasi\'nski-Papa\-georgiou \cite{Gasinski-Papageorgiou-2019} for locally Lipschitz right-hand side, in Gasi\'nski-Winkert \cite{Gasinski-Winkert-2021} for nonlinear boundary conditions and in Liu-Dai \cite{Liu-Dai-2018} for superlinear right-hand sides. Existence and multiplicity results for double phase problems with Robin boundary conditions has been recently studied in Papageorgiou-R\u{a}dulescu-Repov\v{s} \cite{Papageorgiou-Radulescu-Repovs-2020b} and
Papageorgiou-Vetro-Vetro \cite{Papageorgiou-Vetro-Vetro-2020}. Perera-Squassina \cite{Perera-Squassina-2019} used a Morse theoretical approach in order to prove the existence of a solution where they used a cohomological local splitting to get an estimate of the critical groups at zero. Existence results for general quasilinear elliptic equations of double phase type in $\R^N$ have been obtained in Zhang-R\u{a}dulescu \cite{Zhang-Radulescu-2018} by using the tools of critical points theory in generalized Orlicz–Sobolev spaces with variable exponent. Finally, we mention the overview articles of R\u{a}dulescu \cite{rna,Radulescu-2019} about isotropic and anistropic double-phase problems and of Mingione-R\u{a}dulescu \cite{Mingione-Radulescu-2021}  about recent developments for problems with nonstandard growth and nonuniform ellipticity.

The paper is organized as follows. Section \ref{Section2} presents a detailed overview about Musielak-Orlicz Sobolev spaces  including several useful embedding results. Moreover, we state some fundamental results in nonsmooth analysis as well as the Kakutani-Ky Fan fixed point theorem for multivalued functions. In Section \ref{Section3}, we first impose the full assumptions on the data of problem \eqref{eqn1} and introduce an auxiliary problem defined in \eqref{eqn3.1}. Then, we apply an existence theorem for a class of mixed variational inequalities involving pseudomonotone operators in which the constraint set is a bounded, closed and convex set, to show that the solution map of the auxiliary problem  \eqref{eqn3.1} is well-defined and completely continuous, see Theorem \ref{Theorems1}. Moreover, we use the Kakutani-Ky Fan fixed point theorem along with the theory of nonsmooth analysis to explore the nonemptiness and compactness of the solution set of problem \eqref{eqn1}. Finally, several special cases of our problem \eqref{eqn1} are discussed and the corresponding existence results are obtained at the end of the paper.

\section{Preliminaries}\label{Section2}

In this section we are going to provide the main definitions and tools which will be needed in the sequel. To this end, let $\Omega\subset \R^N$ be a bounded domain with Lipschitz boundary $\Gamma:=\partial \Omega$ such that $\Gamma$ is divided into three mutually disjoint parts $\Gamma_1$, $\Gamma_2$ and $\Gamma_3$ with $\Gamma_1$ having positive Lebesgue measure. For any fixed $r\in [1,\infty)$ and for any subset $D$ of $\overline \Omega$ we denote the usual Lebesgue spaces by $L^r(D):=L^r(D;\R)$ and $L^r(D;\R^N)$ equipped with the norm $\|\cdot\|_{r,D}$ given by
\begin{align*}
	\|u\|_{r,D}:=\left(\int_D |u|^r\,\diff x\right)^\frac{1}{r}\quad \text{for all }u\in L^r(D).
\end{align*}
We set $L^r(D)_+:=\{u\in L^r(D)\,:\,u(x)\ge 0\text{ for a.a.\,}x\in D\}$ and define by $W^{1,r}(\Omega)$ the corresponding Sobolev space endowed with the norm $\|\cdot\|_{1,r,\Omega}$ given by
\begin{align*}
	\|u\|_{1,r,\Omega}:=\|u\|_{r,\Omega}+\|\nabla u\|_{r,\Omega}\quad \text{for all }u\in W^{1,r}(\Omega).
\end{align*}
Throughout the paper the symbols "$\wto$" and "$\to$" stand for the weak and the strong convergence, respectively. Furthermore, $r'>1$ is the conjugate of $r>1$, that is, $\frac{1}{r}+\frac{1}{r'}=1$. Moreover, we denote by $r^*$ and $r_*$  the critical exponents to $r$ in the domain and on the boundary, respectively, given by
\begin{align}\label{defp1*}
	r^*=
	\begin{cases}
		\frac{Nr}{N-r} &\text{ if }r<N,\\
		+\infty &\text{ if }r\ge N,
	\end{cases}
	\quad\text{and}\quad
	r_*=
	\begin{cases}
		\frac{(N-1)r}{N-r} &\text{ if }r<N,\\
		+\infty &\text{ if }r\ge N,
	\end{cases}
\end{align}
respectively.

Since $\Gamma_1$  has positive measure we know from Korn's inequality that the function space
\begin{align*}
	W:=\l\{u\in W^{1,p}(\Omega)\,:\,u=0\text{ for a.\,a.\,}x\in \Gamma_1\r\}
\end{align*}
equipped with the norm $\|\nabla \cdot\|_{p,\Omega}$, is a reflexive Banach space.  In what follows, let $\lambda>0$ and $\rho>0$ be the smallest constants such that
\begin{align}\label{lamrho}
	\|u\|_{p,\Omega}^p\le \lambda \|\nabla u\|_{p,\Omega}^p
	\quad\text{and}\quad
	\|u\|_{p,\Gamma_2}^p\le \rho\|\nabla u\|_{p,\Omega}^p
\end{align}
for all $u\in W$. From Simon \cite[formula (2.2)]{Simon-1978} we have the well-known inequality
\begin{align}\label{lemmaineqp}
	\left(|x|^{r-2}x-|y|^{r-2}y\r) \cdot \l(x-y\right) \ge k(r)|x-y|^r
\end{align}
for $r\geq 2$ and for all $x,y\in \R^N$, where $k(r)$ is a positive constant.

In the entire paper, we assume that
\begin{align}\label{conditions-p-q-mu}
	1<p<q<N,\quad q<p^*\quad \text{and}\quad 0\leq \mu(\cdot)\in \Linf.
\end{align}
Next we introduce the function $\mathcal H\colon \Omega\times[0,\infty)\to [0,\infty)$ defined  by
\begin{align*}
	\mathcal H(x,t)=t^p+\mu(x)t^q \quad \text{for all }(x,t)\in \Omega\times [0,\infty),
\end{align*}
and recall that $M(\Omega)$ stands for the space of all measurable functions $u\colon\Omega\to\R$, where we identify two such functions which differ on a Lebesgue-null set. Then,  the Musielak-Orlicz Lebesgue space $L^\mathcal{H}(\Omega)$ is given by
\begin{align*}
	L^\mathcal{H}(\Omega)=\left \{u\in M(\Omega)\,:\,\rho_{\mathcal{H}}(u)<+\infty \right \}
\end{align*}
equipped with the Luxemburg norm
\begin{align*}
	\|u\|_{\mathcal{H}} = \inf \left \{ \tau >0\,:\, \rho_{\mathcal{H}}\left(\frac{u}{\tau}\right) \leq 1  \right \},
\end{align*}
where the modular function is given by
\begin{align*}
	\rho_{\mathcal{H}}(u):=\into \mathcal{H}(x,|u|)\,\diff x=\into \big(|u|^{p}+\mu(x)|u|^q\big)\,\diff x.
\end{align*}
We know that $L^\mathcal H(\Omega)$ is uniformly convex, so it is a reflexive Banach space. Additionally, let us introduce the seminormed function space $L_\mu^q(\Omega)$
\begin{align*}
	L^q_\mu(\Omega)=\left \{u\in M(\Omega)\,:\,\into \mu(x) |u|^q\, \diff x< +\infty \right \}
\end{align*}
endowed with the seminorm
\begin{align*}
	\|u\|_{q,\mu} = \left(\into \mu(x) |u|^q \,\diff x \right)^{\frac{1}{q}}.
\end{align*}

Furthermore, we recall that the Musielak-Orlicz Sobolev space $W^{1,\mathcal{H}}(\Omega)$ is given by
\begin{align*}
	W^{1,\mathcal{H}}(\Omega)= \Big \{u \in L^\mathcal{H}(\Omega) \,:\, |\nabla u| \in L^{\mathcal{H}}(\Omega) \Big\}
\end{align*}
equipped with the norm
\begin{align*}
	\|u\|_{1,\mathcal{H}}= \|\nabla u \|_{\mathcal{H}}+\|u\|_{\mathcal{H}},
\end{align*}
where $\|\nabla u\|_\mathcal{H}=\|\,|\nabla u|\,\|_{\mathcal{H}}$. Clearly, $\WH$ is a reflexive Banach space.

Let us now introduce a closed subspace $V$ of $W^{1,\mathcal{H}}(\Omega)$ defined by
\begin{align*}
	V:=\{u\in W^{1,\mathcal{H}}(\Omega)\,:\,u=0\text{ on $\Gamma_1$}\}.
\end{align*}
It is clear that $V$ is also a reflexive Banach space.  In the following we denote the norm of $V$ by $\|u\|_V=  \|u\|_{1,\mathcal{H}}$ for all $u\in V$, and by $\|\cdot\|_{V^*}$ we denote the norm of the dual space $V^*$  of $V$.

The following proposition states the main embeddings for the spaces $\Lp{\mathcal{H}}$ and $\Wp{\mathcal{H}}$, see Gasi\'nski-Winkert \cite[Proposition 2.2]{Gasinski-Winkert-2021} or Crespo-Blanco-Gasi\'nski-Harjulehto-Winkert \cite[Propositions 2.17 and 2.19]{Crespo-Blanco-Gasinski-Harjulehto-Winkert-2021}.

\begin{proposition}\label{proposition_embeddings}
	Let \eqref{conditions-p-q-mu} be satisfied and let $p^*$ as well as $p_*$ be the critical exponents to $p$ as given in \eqref{defp1*} for $r=p$. Then the following embeddings hold:
	\begin{enumerate}
		\item[\textnormal{(i)}]
		$\Lp{\mathcal{H}} \hookrightarrow \Lp{r}$ and $\WH\hookrightarrow \Wp{r}$ are continuous for all $r\in [1,p]$;
		\item[\textnormal{(ii)}]
		$\WH \hookrightarrow \Lp{r}$ is continuous for all $r \in [1,p^*]$ and compact for all $r \in [1,p^*)$;
		\item[\textnormal{(iii)}]
		$\WH \hookrightarrow \Lprand{r}$ is continuous for all $r \in [1,p_*]$ and compact for all $r \in [1,p_*)$;
		\item[\textnormal{(iv)}]
		$\Lp{\mathcal{H}} \hookrightarrow L^q_\mu(\Omega)$ is continuous;
		\item[\textnormal{(v)}]
		$\Lp{q}\hookrightarrow\Lp{\mathcal{H}} $ is continuous.
	\end{enumerate}
\end{proposition}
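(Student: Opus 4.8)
The plan is to derive everything from two elementary pointwise bounds on the generalized $\Phi$-function, namely $\mathcal H(x,t)\ge t^p$ and $\mathcal H(x,t)\ge \mu(x)t^q$ for all $(x,t)$ (both immediate from $\mu\ge 0$), combined with the classical Sobolev theory on $\Wp{p}$. The only genuinely Musielak--Orlicz ingredient I need is the unit-ball property relating the modular and the Luxemburg norm: since $t\mapsto\mathcal H(x,t)$ is convex, continuous and increasing with $\mathcal H(x,0)=0$, the map $\tau\mapsto\rho_{\mathcal H}(u/\tau)$ is continuous and nonincreasing, whence $\rho_{\mathcal H}\big(u/\|u\|_{\mathcal H}\big)\le 1$ whenever $\|u\|_{\mathcal H}>0$. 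I would record this first, since it is used repeatedly.

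For parts (i) and (iv) I would argue directly. Fix $u\ne 0$ and set $\tau=\|u\|_{\mathcal H}$. From $\rho_{\mathcal H}(u/\tau)\le 1$ and $\mathcal H(x,t)\ge t^p$ one gets $\tau^{-p}\|u\|_{p,\Omega}^p\le\rho_{\mathcal H}(u/\tau)\le 1$, hence $\|u\|_{p,\Omega}\le\|u\|_{\mathcal H}$; this is the cornerstone embedding $\Lp{\mathcal H}\hookrightarrow\Lp{p}$ with constant one. Because $\Omega$ is bounded, Hölder's inequality yields $\Lp{p}\hookrightarrow\Lp{r}$ for every $r\in[1,p]$, and applying the $L^p$-bound simultaneously to $u$ and to $\nabla u$ gives the Sobolev statement $\WH\hookrightarrow\Wp{r}$. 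Using instead $\mathcal H(x,t)\ge\mu(x)t^q$ in the same computation produces $\tau^{-q}\into\mu|u|^q\,\diff x\le 1$, that is $\|u\|_{q,\mu}\le\|u\|_{\mathcal H}$, which is precisely (iv). Parts (ii) and (iii) then reduce to the classical theory: by (i) the inclusion $\WH\hookrightarrow\Wp{p}$ is continuous, so I would compose it with the Sobolev embedding $\Wp{p}\hookrightarrow\Lp{r}$ (continuous for $r\in[1,p^*]$, compact for $r\in[1,p^*)$ by Rellich--Kondrachov) and with the trace embedding $\Wp{p}\hookrightarrow\Lprand{r}$ (continuous for $r\in[1,p_*]$, compact for $r\in[1,p_*)$). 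Since a continuous map followed by a compact one is compact, both assertions follow.

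The step that requires actual estimation — and the one I would treat as the main obstacle — is the reverse embedding (v), where the pointwise bounds point the wrong way. Here I would use a scaling argument. For $u\in\Lp{q}$ and $\tau>0$, the definition of the modular together with Hölder's inequality (valid as $\Omega$ is bounded and $p<q$) and $\mu\in\Linf$ gives
\[
\rho_{\mathcal H}(u/\tau)
\le \tau^{-p}|\Omega|^{1-p/q}\|u\|_{q,\Omega}^p
+\tau^{-q}\|\mu\|_{\infty,\Omega}\,\|u\|_{q,\Omega}^q .
\]
Choosing $\tau=C\|u\|_{q,\Omega}$ with $C$ large enough that $C^{-p}|\Omega|^{1-p/q}+C^{-q}\|\mu\|_{\infty,\Omega}\le 1$ forces $\rho_{\mathcal H}(u/\tau)\le 1$, so the definition of the Luxemburg norm yields $\|u\|_{\mathcal H}\le C\|u\|_{q,\Omega}$; as $C$ is independent of $u$, this is the desired continuous embedding $\Lp{q}\hookrightarrow\Lp{\mathcal H}$. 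The only delicate point throughout is the bookkeeping between modular and norm at the endpoints, which is settled once and for all by the unit-ball property recorded at the outset; the remaining estimates are routine applications of Hölder's inequality and the classical Sobolev and trace theorems.
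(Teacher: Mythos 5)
Your proof is correct. The paper itself gives no argument for this proposition---it simply cites Gasi\'nski--Winkert and Crespo-Blanco--Gasi\'nski--Harjulehto--Winkert---and your argument is essentially the standard one found in those references: the pointwise bounds $\mathcal H(x,t)\ge t^p$ and $\mathcal H(x,t)\ge \mu(x)t^q$ together with the unit-ball property yield (i) and (iv), composition of $\WH\hookrightarrow \Wp{p}$ with the classical Sobolev and trace embeddings on the bounded Lipschitz domain $\Omega$ yields (ii) and (iii), and the modular scaling estimate with $\tau=C\|u\|_{q,\Omega}$ yields (v). The only step you state without justification, the unit-ball property $\rho_{\mathcal H}\bigl(u/\|u\|_{\mathcal H}\bigr)\le 1$, is indeed routine: for every $\tau>\|u\|_{\mathcal H}$ one has $\rho_{\mathcal H}(u/\tau)\le 1$ by definition of the infimum and monotonicity, and letting $\tau\downarrow\|u\|_{\mathcal H}$ one concludes by monotone convergence (finiteness of the modular on all scalar multiples of $u$ is guaranteed since $\mathcal H$ satisfies the $\Delta_2$-condition, $\mathcal H(x,2t)\le 2^q\mathcal H(x,t)$), so this does not affect the validity of your argument.
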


\begin{remark}
	Note that if we replace the space $W^{1,\mathcal H}(\Omega)$ by $V$ in Proposition \ref{proposition_embeddings}, then the embeddings \textnormal{(ii)} and \textnormal{(iii)} remain valid.
\end{remark}

The following proposition is due to Liu-Dai \cite[Proposition 2.1]{Liu-Dai-2018}.

\begin{proposition}\label{inesp}
	Let \eqref{conditions-p-q-mu} be satisfied and let $y\in \Lp{\mathcal{H}}$. Then the following hold:
	\begin{enumerate}
		\item[{\rm(i)}]
		if $y\neq 0$, then $\|y\|_\mathcal H=\lambda$ if and only if $\rho_\mathcal H\left(\frac{y}{\lambda}\right)=1$;
		\item[{\rm(ii)}]
		$\|y\|_\mathcal H<1$ (resp. $>1$ and $=1$) if and only if $\rho_\mathcal H(y)<1$ (resp. $>1$ and $=1$);
		\item[{\rm(iii)}]
		if $\|y\|_\mathcal H<1$, then $\|y\|_{\mathcal H}^q\le \rho_\mathcal H(y)\le \|y\|_\mathcal H^p$;
		\item[{\rm(iv)}]
		if $\|y\|_\mathcal H>1$, then $\|y\|_{\mathcal H}^p\le \rho_\mathcal H(y)\le \|y\|_\mathcal H^q$;
		\item[{\rm(v)}]
		$\|y\|_\mathcal H\to 0$ if and only if $\rho_\mathcal H(y)\to 0$;
		\item[{\rm(vi)}]
		$\|y\|_\mathcal H\to +\infty$ if and only if $ \rho_\mathcal H(y)\to +\infty$.
	\end{enumerate}
\end{proposition}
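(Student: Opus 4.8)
The plan is to reduce everything to the elementary behaviour of the one–variable map obtained by dilating $y$. Fix $y\in \Lp{\mathcal H}$ with $y\neq 0$ and consider
\begin{align*}
	g(\tau):=\rho_{\mathcal H}\left(\frac{y}{\tau}\right)=\frac{1}{\tau^p}\|y\|_{p,\Omega}^p+\frac{1}{\tau^q}\|y\|_{q,\mu}^q,\qquad \tau\in(0,\infty),
\end{align*}
where the explicit splitting uses the separable form $\mathcal H(x,t)=t^p+\mu(x)t^q$ and the fact that $\rho_{\mathcal H}(y)<\infty$ makes both coefficients finite. Since $y\neq 0$, at least one of the nonnegative numbers $\|y\|_{p,\Omega}^p$, $\|y\|_{q,\mu}^q$ is strictly positive, so $g$ is continuous and strictly decreasing on $(0,\infty)$ with $g(0^+)=+\infty$ and $g(+\infty)=0$. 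First I would invoke the intermediate value theorem together with strict monotonicity to produce a unique $\tau_0>0$ with $g(\tau_0)=1$; then $\{\tau>0:g(\tau)\le 1\}=[\tau_0,\infty)$, so the Luxemburg infimum is attained and $\|y\|_{\mathcal H}=\tau_0$ with $\rho_{\mathcal H}(y/\|y\|_{\mathcal H})=1$. This is precisely (i), with $\lambda$ playing the role of the candidate value $\tau_0$. For (ii) I would simply evaluate $g$ at $\tau=1$: as $g$ is strictly decreasing and $g(\|y\|_{\mathcal H})=1$, the three cases $\|y\|_{\mathcal H}<1$, $=1$, $>1$ translate in the same order into $g(1)=\rho_{\mathcal H}(y)<1$, $=1$, $>1$; the degenerate case $y=0$ is trivial.

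The heart of (iii) and (iv) is a two–sided homogeneity estimate for $\mathcal H$. From $\mathcal H(x,\tau s)=\tau^p s^p+\mu(x)\tau^q s^q$ and the ordering of $\tau^p$ and $\tau^q$ forced by $p<q$, I would record the pointwise bounds
\begin{align*}
	\tau^q\mathcal H(x,s)\le \mathcal H(x,\tau s)\le \tau^p\mathcal H(x,s)\quad(0<\tau\le 1),
\end{align*}
together with the reversed chain $\tau^p\mathcal H(x,s)\le \mathcal H(x,\tau s)\le \tau^q\mathcal H(x,s)$ for $\tau\ge 1$, both valid for all $s\ge 0$. Integrating over $\Omega$ yields $\tau^q\rho_{\mathcal H}(w)\le\rho_{\mathcal H}(\tau w)\le\tau^p\rho_{\mathcal H}(w)$ when $0<\tau\le 1$, and the reversed inequalities when $\tau\ge 1$. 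To obtain (iii) I would set $\tau:=\|y\|_{\mathcal H}<1$ and $w:=y/\tau$, so that $\rho_{\mathcal H}(w)=1$ by (i); the first chain then gives $\|y\|_{\mathcal H}^q\le\rho_{\mathcal H}(y)\le\|y\|_{\mathcal H}^p$. Statement (iv) comes out of the reversed chain in the same way with $\tau=\|y\|_{\mathcal H}>1$.

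Finally, (v) and (vi) follow by applying (ii)--(iv) along a sequence. If $\|y_n\|_{\mathcal H}\to 0$, then eventually $\|y_n\|_{\mathcal H}<1$ and (iii) forces $\rho_{\mathcal H}(y_n)\le\|y_n\|_{\mathcal H}^p\to 0$; conversely $\rho_{\mathcal H}(y_n)\to 0$ gives $\rho_{\mathcal H}(y_n)<1$ eventually, hence $\|y_n\|_{\mathcal H}<1$ by (ii) and then $\|y_n\|_{\mathcal H}^q\le\rho_{\mathcal H}(y_n)\to 0$ again by (iii); this is (v). The divergence statement (vi) is handled identically from (iv), using $\|y_n\|_{\mathcal H}^p\le\rho_{\mathcal H}(y_n)\le\|y_n\|_{\mathcal H}^q$ once $\|y_n\|_{\mathcal H}>1$. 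I expect the only genuine content of the argument to be the two–sided homogeneity estimate for $\mathcal H$, that is, keeping straight how $p<q$ and the position of $\tau$ relative to $1$ dictate the direction of the inequalities; once that pointwise bound and the strict monotonicity of $g$ are in place, the remaining steps are routine bookkeeping, with the degenerate case $y=0$ checked separately.
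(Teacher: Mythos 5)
Your proof is correct. Note that the paper does not prove this proposition at all --- it simply cites Liu--Dai [Proposition 2.1] --- and your argument (attainment of the Luxemburg infimum via the strictly decreasing dilation function $\tau\mapsto\rho_{\mathcal H}(y/\tau)$, followed by the two-sided homogeneity bounds $\tau^{q}\mathcal H(x,s)\le\mathcal H(x,\tau s)\le\tau^{p}\mathcal H(x,s)$ for $\tau\le 1$ and the reversed chain for $\tau\ge 1$) is precisely the standard proof given in that reference and its antecedents for variable-exponent and Musielak--Orlicz spaces, so it faithfully fills in what the paper delegates to the citation.
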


Further, we introduce the nonlinear operator $A\colon V\to V^*$ given by
\begin{align}\label{defA}
	\langle A(u),v\rangle :=&\into \big(|\nabla u|^{p-2}\nabla u+\mu(x)|\nabla u|^{q-2}\nabla u \big)\cdot\nabla v \,\diff x\\
	&+\int_\Omega\left(|u|^{p-2}u+\mu(x)|u|^{q-2}u\right)v\,\diff x,\nonumber
\end{align}
for $u,v\in V$ with $\langle \cdot,\cdot\rangle$ being the duality pairing
between $V$ and its dual space $V^*$. The following proposition states the main properties of $A\colon V\to V^*$. We refer to Liu-Dai \cite[Proposition 3.1]{Liu-Dai-2018} or Crespo-Blanco-Gasi\'nski-Harjulehto-Winkert \cite[Proposition 3.4]{Crespo-Blanco-Gasinski-Harjulehto-Winkert-2021} for its proof.

\begin{proposition}\label{prop1}
	The operator $A$ defined by \eqref{defA} is bounded, continuous, monotone (hence maximal monotone) and of type $(\Ss_+)$, that is,
	\begin{align*}
		u_n\wto u \quad \text{in }V \quad\text{and}\quad  \limsup_{n\to\infty}\langle Au_n,u_n-u\rangle\le 0,
	\end{align*}
	imply $u_n\to u$ in $V$.
\end{proposition}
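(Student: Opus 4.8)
\emph{Plan.} I would treat $A$ as the sum $A=A_1+A_2$, where
\[
\langle A_1(u),v\rangle:=\into\big(|\nabla u|^{p-2}\nabla u+\mu(x)|\nabla u|^{q-2}\nabla u\big)\cdot\nabla v\,\diff x
\]
is the principal (gradient) part and
\[
\langle A_2(u),v\rangle:=\into\big(|u|^{p-2}u+\mu(x)|u|^{q-2}u\big)v\,\diff x
\]
is the lower-order part, establishing the four assertions in turn. \textbf{Boundedness and continuity} I would obtain from the theory of Nemytskii operators: the fields $\xi\mapsto|\xi|^{p-2}\xi$ and $\xi\mapsto|\xi|^{q-2}\xi$ generate bounded and continuous operators from $\Lpvalued{p}$ into $\Lpvalued{p'}$ and (after weighting by $\mu\in\Linf$) into the appropriate weighted dual, so that H\"older's inequality together with the modular estimates of Proposition \ref{inesp} yields a bound of the form $\|A(u)\|_{V^*}\le c\,(1+\|u\|_V^{q-1})$ on bounded sets and sequential continuity of $A$.

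\textbf{Monotonicity} follows pointwise: for every $r>1$ the vector field $\xi\mapsto|\xi|^{r-2}\xi$ is monotone, so each of the four integrands occurring in $\langle A(u)-A(v),u-v\rangle$ is nonnegative, and summing gives $\langle A(u)-A(v),u-v\rangle\ge 0$. Since $A$ is moreover continuous and hence hemicontinuous, and $V$ is reflexive, a monotone hemicontinuous operator is maximal monotone, which settles that claim.

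\textbf{The main work is the $(\Ss_+)$ property.} Let $u_n\weak u$ in $V$ with $\limsup_{n\to\infty}\langle A(u_n),u_n-u\rangle\le 0$. First I would dispose of $A_2$: by the compact embeddings $V\hookrightarrow\Lp{p}$ and $V\hookrightarrow\Lp{q}$, which hold since $q<p^*$ (Proposition \ref{proposition_embeddings}(ii) and the subsequent remark), one has $u_n\to u$ in $\Lp{p}$ and in $\Lp{q}$, whence $A_2(u_n)\to A_2(u)$ in $V^*$ by continuity of the Nemytskii operators, and therefore $\langle A_2(u_n),u_n-u\rangle\to 0$. This reduces the hypothesis to $\limsup_{n\to\infty}\langle A_1(u_n),u_n-u\rangle\le 0$. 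Combining the monotonicity of $A_1$ with $\langle A_1(u),u_n-u\rangle\to 0$ (as $u_n-u\weak 0$) I would upgrade this to
\[
\lim_{n\to\infty}\into\big(|\nabla u_n|^{p-2}\nabla u_n-|\nabla u|^{p-2}\nabla u\big)\cdot(\nabla u_n-\nabla u)\,\diff x=0
\]
and the analogous limit for the $\mu$-weighted $q$-term, each integrand being nonnegative.

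\textbf{Expected obstacle.} The delicate point is to convert these two vanishing integrals into the strong convergences $\nabla u_n\to\nabla u$ in $\Lpvalued{p}$ and $\into\mu(x)|\nabla u_n-\nabla u|^q\,\diff x\to 0$, which together with the already established $\Lp{p}$- and $L^q_\mu$-convergence of $u_n$ give modular convergence $\rho_{\mathcal H}(\nabla u_n-\nabla u)+\rho_{\mathcal H}(u_n-u)\to 0$ and hence, by Proposition \ref{inesp}(v), $\|u_n-u\|_V\to 0$. When the relevant exponent is $\ge 2$ this is immediate from Simon's inequality \eqref{lemmaineqp}. When it lies in $(1,2)$ the estimate \eqref{lemmaineqp} is unavailable, and I would instead invoke the companion inequality
\[
\big(|\xi|^{r-2}\xi-|\eta|^{r-2}\eta\big)\cdot(\xi-\eta)\ge c(r)\,\frac{|\xi-\eta|^2}{(|\xi|+|\eta|)^{2-r}},
\]
recovering $L^r$-convergence through a H\"older argument with exponents $\tfrac{2}{r}$ and $\tfrac{2}{2-r}$ and using the boundedness of $\{u_n\}$ in $V$ to control the remaining factor. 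Executing this case distinction carefully for both the $p$-part and the $\mu$-weighted $q$-part is the crux of the proof.
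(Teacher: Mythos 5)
Your proposal is correct and takes essentially the same approach as the paper's proof: the paper does not prove Proposition \ref{prop1} itself but defers to \cite[Proposition 3.1]{Liu-Dai-2018} and \cite[Proposition 3.4]{Crespo-Blanco-Gasinski-Harjulehto-Winkert-2021}, whose arguments, like yours, handle the lower-order terms by the compact embeddings, reduce the $(\Ss_+)$ property to the gradient part via monotonicity, and conclude with the two Simon-type inequalities (cases $r\ge 2$ and $1<r<2$) followed by modular-to-norm convergence via Proposition \ref{inesp}(v). Your treatment of the weighted $q$-term and the H\"older argument in the singular case $1<r<2$ matches the standard execution there, so there is no gap.
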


Subsequently, we recall some notion and results concerning nonsmooth analysis and multivalued analysis. We start by recalling definitions and properties of semicontinuous multivalued operators.

\begin{definition}
	Let $Y$ and $Z$ be topological spaces, let $D\subset Y$ be a nonempty set, and let $G\colon Y\to 2^Z$ be a multivalued map.
	\begin{enumerate}
		\item[{\rm(i)}]
		The map $G$ is called upper semicontinuous (u.s.c. for short) at $y\in Y,$ if for each open set $O\subset Z$ such that $G(y)\subset O$,  there exists a neighborhood $N(y)$ of $y$ satisfying $G(N(y)):=\cup_{z\in N(y)}G(z)\subset O$. If it holds for each $y\in D$, then $G$ is called to be upper semicontinuous in $D$.
		\item[{\rm(ii)}]
		The map $G$ is closed at $y\in Y$, if for every sequence $\{(y_n,z_n)\}_{n\in\N}\subset \graph(G)$ satisfying  $(y_n,z_n)\to(y,z)$ in $Y\times Z$, then it holds $(y,z)\in \graph (G)$, where $\graph(G)$ is the graph of $G$ defined by
		\begin{align*}
			\graph(G):=\l\{(y,z)\in Y\times Z\,:\,z\in G(y)\r\}.
		\end{align*}
		If it holds for each $y\in Y$, then $G$ is called to be closed or $G$ has a closed graph.
	\end{enumerate}
\end{definition}

The following proposition provides two useful ways to determinate that a multi-valued map is u.s.c.

\begin{proposition}\label{uscproposition}
	Let $F\colon X\to 2^Y$ with $X$ and $Y$ being topological spaces. The following statements are equivalent:
	\begin{enumerate}
		\item[{\rm(i)}]
		$F$ is upper semicontinuous.
		\item[{\rm(ii)}]
		For each closed set $C\subset Y$, $F^{-}(C):=\{x\in X\,:\, F(x)\cap C\neq\emptyset\}$ is closed in $X$.
		\item[{\rm(iii)}]
		For each open set $O\subset Y$, $F^{+}(O):=\{x\in X\,:\, F(x)\subset O\}$ is open in $X$.
	\end{enumerate}
\end{proposition}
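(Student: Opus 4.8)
The plan is to prove the three equivalences by exploiting the duality between the ``strong preimage'' $F^{-}$ and the ``weak preimage'' $F^{+}$ under complementation, and then relating $F^{+}$ directly to the neighborhood formulation of upper semicontinuity. The only set-theoretic ingredient needed is the identity
\begin{align*}
	F^{+}(S)=X\setminus F^{-}(Y\setminus S)\quad\text{for every }S\subset Y,
\end{align*}
which follows immediately from the definitions, since $F(x)\subset S$ holds exactly when $F(x)\cap(Y\setminus S)=\emptyset$, i.e.\ when $x\notin F^{-}(Y\setminus S)$. Granting this identity, the equivalence of \textnormal{(ii)} and \textnormal{(iii)} is purely formal: a set $O\subset Y$ is open precisely when $C:=Y\setminus O$ is closed, and then $F^{+}(O)=X\setminus F^{-}(C)$, so $F^{+}(O)$ is open in $X$ if and only if $F^{-}(C)$ is closed in $X$. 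Running this over all open $O$ (equivalently, all closed $C$) yields \textnormal{(ii)}$\Leftrightarrow$\textnormal{(iii)}.

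It remains to establish \textnormal{(i)}$\Leftrightarrow$\textnormal{(iii)} directly from the definition of upper semicontinuity. For \textnormal{(i)}$\Rightarrow$\textnormal{(iii)}, I would fix an open set $O\subset Y$ and an arbitrary point $x\in F^{+}(O)$, so that $F(x)\subset O$; upper semicontinuity at $x$ then furnishes a neighborhood $N(x)$ with $F(N(x))\subset O$, which says precisely that $N(x)\subset F^{+}(O)$. Hence $F^{+}(O)$ is a neighborhood of each of its points and is therefore open. For the converse \textnormal{(iii)}$\Rightarrow$\textnormal{(i)}, the key observation is that $F^{+}(O)$ can itself serve as the required neighborhood: given $x\in X$ and an open set $O\supset F(x)$, we have $x\in F^{+}(O)$, and by \textnormal{(iii)} the set $F^{+}(O)$ is open, hence an open neighborhood of $x$. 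By the very meaning of $F^{+}$ we get $F(F^{+}(O))\subset O$, so the upper semicontinuity condition at $x$ is verified; since $x$ was arbitrary, $F$ is upper semicontinuous.

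This result is a standard fact in multivalued analysis, so I do not anticipate a genuine obstacle; the work is entirely bookkeeping. The one place to stay careful is the complementation identity and the consistent use of the two preimages $F^{+}$ and $F^{-}$, since it is easy to interchange their roles. The only conceptual point worth isolating is that in the direction \textnormal{(iii)}$\Rightarrow$\textnormal{(i)} one need not construct a neighborhood by hand: the openness of $F^{+}(O)$ provided by \textnormal{(iii)} already delivers it, which is what makes the equivalence clean.
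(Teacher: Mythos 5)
Your proposal is correct and complete: the complementation identity $F^{+}(S)=X\setminus F^{-}(Y\setminus S)$ gives (ii)$\Leftrightarrow$(iii) formally, and your two directions linking (iii) to the neighborhood definition of upper semicontinuity are both sound, including the observation that $F^{+}(O)$ itself serves as the required neighborhood. The paper states this proposition without proof, as a standard fact of multivalued analysis, so there is no argument to compare against; yours is the standard one and fills that gap correctly.
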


Let  $(E,\|\cdot\|_E)$ be a real Banach space.  Recall that a function $\varphi\colon E\to \overline \R:=\R\cup\{+\infty\}$ is called to be proper, convex and lower semicontinuous, if the following conditions are satisfied:
\begin{enumerate}
	\item[$\bullet$]
	$D(\varphi):=\{u\in E\,:\,\varphi(u)<+\infty\}\neq\emptyset$;
	\item[$\bullet$]
	for any $u,v\in E$ and $t\in(0,1)$, it holds
	$\varphi(tu+(1-t)v)\le t\varphi(u)+(1-t)\varphi(v);$
	\item[$\bullet$]
	$\liminf_{n\to \infty}\varphi(u_n)\ge \varphi(u)$ where the sequence $\{u_n\}_{n\in\N}\subset E$ is such that $u_n\wto u$ as $n\to\infty$ for some $u\in E$.
\end{enumerate}
If the map $\varphi $ is convex, an element $x^* \in E^*$ is called a subgradient of $\varphi$ at $u \in E,$ if
\begin{align}\label{B.SUBCONV}
	\langle x^*, v - u \rangle \le \varphi(v) - \varphi(u) \quad \text{for all} \ v \in E.
\end{align}
The set of all elements $x^* \in E^*$ which satisfies (\ref{B.SUBCONV})
is called the convex subdifferential of $\varphi$ at $u$ and is denoted by $\partial_c \varphi(u)$.

We call a function $j\colon E\to \R$ to be locally Lipschitz at $x\in E$, if there is a neighborhood $O(x)$ of $x$ and a constant $L_x>0$ such that
\begin{align*}
	|j(y)-j(z)|\leq L_x\|y-z\|_E \quad \text{for all } y, z\in O(x).
\end{align*}
Moreover, we denote by
\begin{align*}
	j^\circ(x;y): = \limsup \limits_{z\to x,\, \lambda\downarrow 0 } \frac{j(z+\lambda y)-j(z)}{\lambda},
\end{align*}
the generalized directional derivative of $j$ at the point $x$ in the direction $y$
and $\partial j\colon E\to 2^{E^*}$ given by
\begin{align*}
	\partial j(x): =\left \{\, \xi\in E^{*} \, : \, j^\circ (x; y)\geq \langle\xi, y\rangle_{E^*\times E} \ \text{ for all } y \in    E  \right \} \quad \text{for all } x\in E
\end{align*}
is the generalized gradient of $j$ at $x$ in the sense of Clarke.

The next proposition summarizes the properties of generalized gradients and generalized directional derivatives of a locally Lipschitz functions, see, for example, Mig\'{o}rski-Ochal-Sofonea \cite[Proposition 3.23]{Migorski-Ochal-Sofonea-2013}.

\begin{proposition}\label{P1}
	Let $j\colon E \to \R$ be locally Lipschitz with Lipschitz constant $L_{x}>0$ at $x\in E$. Then we have the following:
	\begin{enumerate}
		\item[{\rm(i)}]
		The function $y\mapsto j^\circ(x;y)$ is positively    homogeneous, subadditive, and satisfies
		\begin{align*}
			|j^\circ(x;y)|\leq L_{x}\|y\|_E \quad \text{for all }y\in E.
		\end{align*}
		\item[{\rm(ii)}]
		The function $(x,y)\mapsto j^\circ(x;y)$ is upper semicontinuous.
		\item[{\rm(iii)}]
		For each $x\in E$, $\partial j(x)$ is a nonempty, convex, and weakly$^*$ compact subset of $E^*$ with $ \|\xi\|_{E^{*}}\leq L_{x}$ for all $\xi\in\partial j(x)$.
		\item[{\rm(iv)}]
		$j^\circ(x;y) = \max\left\{\langle\xi,y\rangle_{E^*\times E} \,:\, \xi\in\partial j(x)\right\}$ for all $y\in E$.
		\item[{\rm(v)}]
		The multivalued function $E\ni x\mapsto \partial j(x)\subset E^*$ is upper semicontinuous from $E$ into w$^*$-$E^*$.
	\end{enumerate}
\end{proposition}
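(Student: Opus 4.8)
The plan is to establish the five assertions in the order listed, since each relies on the previous ones. I would first treat the algebraic and growth properties of the generalized directional derivative in (i). Positive homogeneity follows by the substitution $\mu=\lambda t$ in the difference quotient defining $j^\circ(x;ty)$ for $t>0$. Subadditivity follows by splitting $j(z+\lambda(y_1+y_2))-j(z)$ as $[\,j(z+\lambda y_2+\lambda y_1)-j(z+\lambda y_2)\,]+[\,j(z+\lambda y_2)-j(z)\,]$ and noting that $z+\lambda y_2\to x$ as $z\to x$ and $\lambda\downarrow 0$, so each $\limsup$ is bounded by the corresponding $j^\circ(x;y_i)$. The bound $|j^\circ(x;y)|\le L_x\|y\|_E$ is immediate from the local Lipschitz estimate $|j(z+\lambda y)-j(z)|\le L_x\lambda\|y\|_E$, valid once $z$ is close enough to $x$ and $\lambda$ small.

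For (ii) I would argue by sequences: given $x_n\to x$ and $y_n\to y$, use the definition of the $\limsup$ to pick points $z_n$ with $\|z_n-x_n\|_E<1/n$ and scalars $0<\lambda_n<1/n$ realizing $j^\circ(x_n;y_n)$ up to an error $1/n$. Writing $\frac{j(z_n+\lambda_n y_n)-j(z_n)}{\lambda_n}$ as $\frac{j(z_n+\lambda_n y)-j(z_n)}{\lambda_n}+\frac{j(z_n+\lambda_n y_n)-j(z_n+\lambda_n y)}{\lambda_n}$, the second term is controlled by $L_x\|y_n-y\|_E\to 0$, while the first has $z_n\to x$ and $\lambda_n\downarrow 0$, so its $\limsup$ is $\le j^\circ(x;y)$; this yields $\limsup_{n\to\infty} j^\circ(x_n;y_n)\le j^\circ(x;y)$.

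Assertion (iii) is where Hahn--Banach enters. By (i) the map $y\mapsto j^\circ(x;y)$ is sublinear, so extending the zero functional on $\{0\}$ via Hahn--Banach furnishes $\xi\in E^*$ with $\langle\xi,y\rangle\le j^\circ(x;y)$ for all $y$; hence $\partial j(x)\neq\emptyset$. Convexity and weak$^*$ closedness are clear since $\partial j(x)$ is an intersection of the weak$^*$ closed half-spaces $\{\xi:\langle\xi,y\rangle\le j^\circ(x;y)\}$ over $y\in E$; the bound $\|\xi\|_{E^*}\le L_x$ follows from the estimate in (i), and weak$^*$ compactness is then Banach--Alaoglu. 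For (iv), the identity expresses $j^\circ(x;\cdot)$ as the support function of $\partial j(x)$: the inequality $j^\circ(x;y)\ge\max_{\xi\in\partial j(x)}\langle\xi,y\rangle$ is the definition, while the reverse inequality with attainment follows from a second application of Hahn--Banach, extending $ty_0\mapsto t\,j^\circ(x;y_0)$ from the line $\R y_0$ (dominated by $j^\circ(x;\cdot)$ thanks to positive homogeneity and subadditivity) to a functional lying in $\partial j(x)$ that attains $j^\circ(x;y_0)$.

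The main obstacle is (v), the upper semicontinuity of $x\mapsto\partial j(x)$ into $E^*$ with the weak$^*$ topology, because that topology is in general not metrizable, so a naive sequential argument does not establish u.s.c.\ directly. My plan is to invoke Proposition \ref{uscproposition}: it suffices to show that for every weak$^*$ closed set $C\subset E^*$ the set $\{x:\partial j(x)\cap C\neq\emptyset\}$ is closed, which I would reduce to a closed-graph statement combined with local boundedness. Concretely, near a fixed $x_0$ the Lipschitz constants stay bounded, so $\bigcup_x\partial j(x)$ lies in a fixed weak$^*$ compact ball by Banach--Alaoglu; on such a ball one passes to weak$^*$ limits of nets $\xi_\alpha\in\partial j(x_\alpha)$ with $x_\alpha\to x_0$, and the inequality $\langle\xi_\alpha,y\rangle\le j^\circ(x_\alpha;y)$ passes to the limit using the upper semicontinuity of $j^\circ$ from (ii), giving $\langle\xi,y\rangle\le j^\circ(x_0;y)$ for all $y$, i.e.\ $\xi\in\partial j(x_0)$. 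It is precisely this local boundedness that upgrades the closed-graph property to genuine upper semicontinuity in the weak$^*$ topology.
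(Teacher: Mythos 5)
The paper offers no proof of Proposition \ref{P1}: it is quoted as a known background result, with a citation to Mig\'{o}rski--Ochal--Sofonea (Proposition 3.23 there), which in turn goes back to Clarke's monograph. So there is no in-paper argument to compare against; what can be judged is correctness, and your proof is correct --- indeed it is essentially the canonical proof found in the cited sources. All the key steps are in place: the change of variables and the three-term splitting for positive homogeneity and subadditivity in (i), together with the two-sided Lipschitz bound on the difference quotients; the near-maximizing choices $z_n$, $\lambda_n$ plus the Lipschitz correction term $L_x\|y_n-y\|_E$ for (ii); Hahn--Banach domination by the sublinear function $j^\circ(x;\cdot)$ for nonemptiness in (iii) (the resulting linear functional is automatically continuous, with $\|\xi\|_{E^*}\le L_x$, by the bound in (i)); the representation of $\partial j(x)$ as an intersection of weak$^*$ closed half-spaces plus Banach--Alaoglu for convexity and weak$^*$ compactness; and the one-dimensional Hahn--Banach extension of $t y_0\mapsto t\,j^\circ(x;y_0)$ for the attainment statement in (iv), where the needed domination for $t<0$ follows from $0=j^\circ(x;0)\le j^\circ(x;y_0)+j^\circ(x;-y_0)$. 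The only genuinely delicate point is the one you yourself flag in (v): since bounded sets in $E^*$ need not be weak$^*$ metrizable, a purely sequential argument does not suffice, and you correctly replace it by the combination of (a) the uniform bound $\|\xi\|_{E^*}\le L_{x_0}$ for $\xi\in\partial j(x)$ with $x$ near $x_0$ (valid because the difference quotients defining $j^\circ(x;\cdot)$ for such $x$ only sample points in the fixed Lipschitz neighborhood of $x_0$), (b) weak$^*$ compactness of the corresponding ball, so that one may pass to weak$^*$ convergent subnets, and (c) the upper semicontinuity from (ii) to pass the inequality $\langle\xi_\alpha,y\rangle\le j^\circ(x_\alpha;y)$ to the limit, using that a $\limsup$ along a subnet of a sequence is dominated by the $\limsup$ of the sequence. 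This upgrades the closed-graph property to genuine upper semicontinuity via Proposition \ref{uscproposition}, exactly as claimed, so the argument is complete.
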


The following theorem provides a useful criterion to determinate the existence of solutions for a class of mixed variational inequalities involving pseudomonotone operators, see, for example, Liu-Liu-Wen-Yao-Zeng \cite[Proposition 5]{Liu-Liu-Wen-Yao-Zeng-2020}.

\begin{theorem}\label{Brodersurjectivity}
	Let $V$ be a reflexive Banach space, $T\colon V\to 2^{V^*}$ be a pseudomonotone operator in the sense of Br\'ezis, $C\subset V$ be nonempty, bounded, closed and convex, and $\varphi\colon V\to \overline\R:=\R\cup\{+\infty\}$ be a proper, convex and l.s.c. function. Then, for a given element $f\in V^*$, there exist $u\in C$ and $u^*\in T(u)$ such that
	\begin{align*}
		\langle u^*-f,v-u\rangle+\varphi(v)-\varphi(u)\ge 0\text{ for all $v\in C$}.
	\end{align*}
\end{theorem}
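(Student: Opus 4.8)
The plan is to prove the statement by a Galerkin approximation combined with the defining properties of a Brézis pseudomonotone operator, removing the set-valuedness of $T$ only at the very last step via a minimax argument. Throughout I assume, as is implicit in the statement, that $C\cap D(\varphi)\neq\emptyset$ (otherwise the inequality is vacuous) and that $T$ maps bounded subsets of $V$ into bounded subsets of $V^*$, which is part of the working notion of pseudomonotonicity. It is worth noting that the asserted inequality says exactly that $u$ solves the inclusion $f\in T(u)+\partial_c\psi(u)$ with $\psi:=\varphi+I_C$, $I_C$ the indicator of $C$; this reformulation explains why the boundedness of $C$ takes the place of a coercivity hypothesis, since $\operatorname{dom}(\partial_c\psi)\subset C$ is then bounded.

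First I would fix a base point $u_0\in C\cap D(\varphi)$ and choose an increasing sequence of finite-dimensional subspaces $F_1\subset F_2\subset\cdots$ of $V$ with $u_0\in F_1$ and $\overline{\bigcup_n F_n}=V$, so that each $C_n:=C\cap F_n$ is nonempty, compact and convex. On each $C_n$ the finite-dimensional problem, namely to find $u_n\in C_n$ and $u_n^*\in T(u_n)$ with $\langle u_n^*-f,v-u_n\rangle+\varphi(v)-\varphi(u_n)\ge 0$ for all $v\in C_n$, is solvable by combining the upper semicontinuity of $T$ on finite-dimensional subspaces (with nonempty, convex, compact values) with a Fan--Knaster--Kuratowski--Mazurkiewicz argument, equivalently the Kakutani fixed point theorem applied after a Moreau--Yosida regularisation of $\varphi$. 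Since $C$ is bounded and $T$ is bounded, the sequences $\{u_n\}\subset C$ and $\{u_n^*\}\subset V^*$ are bounded; by reflexivity of $V$ and $V^*$ I pass to subsequences with $u_n\rightharpoonup u$ in $V$ and $u_n^*\rightharpoonup\xi$ in $V^*$, and $u\in C$ because $C$ is weakly closed. Testing the finite-dimensional inequality with $v=u_0$ and using these bounds shows $\varphi(u_n)$ is bounded above, whence $u\in D(\varphi)$ by the weak lower semicontinuity of $\varphi$.

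The heart of the proof is the limit passage. For any $v\in\bigcup_m F_m\cap C$ and all large $n$ the finite-dimensional inequality gives $\langle u_n^*,u_n-v\rangle\le\langle f,u_n-v\rangle+\varphi(v)-\varphi(u_n)$, and passing to $\limsup_{n\to\infty}$, using $u_n\rightharpoonup u$, the weak convergence of $u_n^*$ and the lower semicontinuity of $\varphi$, yields the key bound
\[
\limsup_{n\to\infty}\langle u_n^*,u_n-v\rangle\le\langle f,u-v\rangle+\varphi(v)-\varphi(u).
\]
Writing $\langle u_n^*,u_n-u\rangle=\langle u_n^*,u_n-v\rangle+\langle u_n^*,v-u\rangle$ and using $\langle u_n^*,v-u\rangle\to\langle\xi,v-u\rangle$, I obtain $\limsup_{n\to\infty}\langle u_n^*,u_n-u\rangle\le\langle f-\xi,u-v\rangle+\varphi(v)-\varphi(u)$. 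I then want to let $v\to u$ on the right-hand side, and the \emph{main obstacle} is precisely the construction of a recovery sequence $v_k\in\bigcup_m F_m\cap C$ with $v_k\to u$ in $V$ and $\varphi(v_k)\to\varphi(u)$, since lower semicontinuity controls only one direction. I would secure this by arranging $\bigcup_m F_m$ to contain a countable set dense for the Mosco convergence of $\psi=\varphi+I_C$, or equivalently by running the whole Galerkin scheme on the Moreau--Yosida regularisations $\psi_\lambda$ and sending $\lambda\downarrow 0$ at the end; this produces the recovery sequence and forces $\limsup_{n\to\infty}\langle u_n^*,u_n-u\rangle\le 0$. The same density step extends the key bound above from $\bigcup_m F_m\cap C$ to every $v\in C$.

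With $\limsup_{n\to\infty}\langle u_n^*,u_n-u\rangle\le 0$ in hand, the defining property of Brézis pseudomonotonicity applies: for every $v\in V$ there is $\widehat u^*(v)\in T(u)$ with $\langle\widehat u^*(v),u-v\rangle\le\liminf_{n\to\infty}\langle u_n^*,u_n-v\rangle$. Combining this with the key bound (valid now for all $v\in C$) gives, for each $v\in C$, some $w^*\in T(u)$ with $\langle w^*-f,v-u\rangle+\varphi(v)-\varphi(u)\ge 0$. It remains to replace this $v$-dependent $w^*$ by a single one. Setting $g(v,w^*):=\langle w^*-f,v-u\rangle+\varphi(v)-\varphi(u)$, the previous line reads $\sup_{w^*\in T(u)}g(v,w^*)\ge 0$ for all $v\in C$; since $g$ is convex and lower semicontinuous in $v$ and affine and weak$^*$-continuous in $w^*$, while $T(u)$ is convex and weak$^*$-compact (being bounded, closed and convex in the dual of the reflexive space $V$) and $C$ is convex, Sion's minimax theorem permits interchanging $\inf_v$ and $\sup_{w^*}$. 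This yields $u^*\in T(u)$ with $g(v,u^*)\ge 0$ for all $v\in C$, which is exactly the assertion. The one delicate point, as noted, is the $\varphi$-respecting approximation of $u$ by points of $\bigcup_m F_m\cap C$; everything else is a routine assembly of the boundedness, weak compactness and pseudomonotonicity hypotheses.
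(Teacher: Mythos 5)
First, a point of reference: the paper itself does not prove this theorem; it quotes it from Liu--Liu--Wen--Yao--Zeng [Proposition 5]. So your argument must stand on its own, and its skeleton -- finite-dimensional solvability, boundedness of $\{u_n\}$ and $\{u_n^*\}$, the Br\'ezis limit condition, and the final minimax argument replacing the $v$-dependent $w^*\in T(u)$ by a single $u^*$ -- is indeed the classical Browder-type route; the Sion step at the end is correct as you set it up (restrict $v$ to $C\cap D(\varphi)$, where $g$ is real-valued, and use that $T(u)$ is weakly compact convex). The genuine gap is precisely the step you flag as the ``main obstacle'', and neither of your proposed repairs closes it under the stated hypotheses. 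The Mosco-density repair requires a countable subset of $C\cap D(\varphi)$ that is dense in the graph sense ($v_k\to u$ strongly and $\varphi(v_k)\to\varphi(u)$); such a set exists exactly when $\mathrm{epi}(\varphi+I_C)$ is strongly separable, which holds if $V$ is separable but is not implied by reflexivity. For $V$ nonseparable (say $\ell^2(\Gamma)$ with $\Gamma$ uncountable), $C$ its closed unit ball and $\varphi\equiv 0$, the set $\bigcup_m F_m\cap C$ lies in a separable subspace and can never be strongly dense in $C$, so no choice of the $F_m$ works, and the limiting variational inequality you obtain holds only for test functions $v$ in a separable slice of $C$ -- which is strictly weaker than the assertion (take $T(u)=J(u-w)$ with $w$ outside that slice to see the conclusion genuinely fails for the produced $u$). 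The Moreau--Yosida repair has a different flaw: regularizing $\psi=\varphi+I_C$ produces everywhere-finite functionals, so the constraint set -- whose boundedness is the only substitute for coercivity in this theorem -- disappears from the approximate problems, and their solvability and the boundedness of their solutions are no longer guaranteed; if instead you regularize only $\varphi$ and keep $C$, you are back to needing strong density of $\bigcup_m F_m\cap C$ in $C$, which fails in general (in $\ell^2$, take $C=\{tw:t\in[0,1]\}$ with $w=\sum_k 2^{-k}e_k$ and $F_m=\mathrm{span}\{e_1,\dots,e_m\}$; then $F_m\cap C=\{0\}$ for every $m$).

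The missing idea is Browder's trick, which eliminates the recovery sequence altogether: index the Galerkin problems not by an increasing sequence but by the family $\mathcal{F}$ of \emph{all} finite-dimensional subspaces $F\ni u_0$, directed by inclusion, and set $W_F:=$ the weak closure of $\{u_{F'}:F'\supseteq F\}$. These sets have the finite intersection property inside the weakly compact set $C$, so one may pick $u\in\bigcap_{F\in\mathcal{F}}W_F$; for a given $v\in C$ one then works only with subspaces $F'\supseteq\mathrm{span}\{u_0,u,v\}$, for which $u$ itself is an admissible test function in the approximate inequality. Testing with $u$ gives $\langle u^*_{F'},u_{F'}-u\rangle\le\langle f,u_{F'}-u\rangle+\varphi(u)-\varphi(u_{F'})$, and weak lower semicontinuity of $\varphi$ alone yields $\limsup\langle u^*_{F'},u_{F'}-u\rangle\le 0$; no approximation of $u$ in energy is needed (the finiteness of $\varphi(u)$ follows from testing with $u_0$, as in your proof). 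The remaining price is that pseudomonotonicity must then be invoked along nets rather than sequences, which needs either a net version of the Br\'ezis condition or an additional reduction; this is why the literature, including the reference the paper cites, typically proves the result instead by applying a surjectivity theorem to $T+\partial_c(\varphi+I_C)$ -- a bounded pseudomonotone operator plus a maximal monotone operator with bounded effective domain -- and unwinding $f-u^*\in\partial_c(\varphi+I_C)(u)$ into the asserted inequality. In short: your proof is correct in spirit and complete, along the lines you indicate, once separability of $C\cap D(\varphi)$ is available (as it is in the paper's application), but as a proof of the theorem as stated the density step remains a real gap.
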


We end this section by recalling the so-called Kakutani-Ky Fan fixed point theorem,
see, for example, Papageorgiou-Kyritsi-Yiallourou \cite[Theorem 2.6.7]{Papageorgiou-Kyritsi-Yiallourou-2009}. We are going to apply this theorem in order to prove the existence of solutions of problem \eqref{eqn1}.

\begin{theorem}\label{fpt}
	Let $Y$ be a reflexive Banach space and
	$D \subseteq Y$ be a nonempty, bounded, closed and convex set. Let $\Lambda \colon D \to 2^D$
	be a multivalued map with nonempty, closed and convex values such that its graph is sequentially closed in $Y_w \times Y_w$ topology.
	Then $\Lambda$ has a fixed point.
\end{theorem}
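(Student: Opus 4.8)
The plan is to transport the problem into the weak topology and reduce it to the classical set-valued fixed point theorem of Fan and Glicksberg (the locally convex extension of Kakutani's theorem), whose hypotheses require a compact convex set together with an upper semicontinuous multimap having nonempty, closed and convex values in a locally convex Hausdorff space. First I would observe that, since $Y$ is reflexive and $D$ is bounded, closed and convex, Kakutani's characterization of reflexivity gives that $D$ is weakly compact; equivalently, by the Eberlein--Šmulian theorem, $D$ is weakly sequentially compact. Hence $(D,w)$, the set $D$ equipped with the relative weak topology, is a compact convex subset of the locally convex Hausdorff space $Y_w$, which is exactly the ambient setting demanded by Fan--Glicksberg.

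Next I would verify the regularity of $\Lambda$ in this topology. For each $u\in D$ the value $\Lambda(u)$ is nonempty and convex by hypothesis and closed in $Y$; being convex and closed, Mazur's theorem renders it weakly closed, and as a subset of the weakly compact set $D$ it is therefore weakly compact, so the values are admissible for Fan--Glicksberg. The decisive point is upper semicontinuity, which by Proposition \ref{uscproposition}(ii) is equivalent to showing that $\Lambda^-(C):=\{u\in D:\Lambda(u)\cap C\neq\emptyset\}$ is weakly closed whenever $C\subseteq D$ is weakly closed. To this end I would first prove that $\Lambda^-(C)$ is weakly \emph{sequentially} closed: given $u_n\in\Lambda^-(C)$ with $u_n\weak u$ and $v_n\in\Lambda(u_n)\cap C$, weak sequential compactness of $D$ yields a subsequence $v_{n_k}\weak v$ with $v\in C$ (as $C$ is weakly closed), whence $(u_{n_k},v_{n_k})\to(u,v)$ in $Y_w\times Y_w$; the assumed sequential closedness of $\graph(\Lambda)$ then forces $v\in\Lambda(u)$, so $u\in\Lambda^-(C)$.

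The remaining gap --- passing from sequential weak closedness to genuine weak closedness without assuming the weak topology to be metrizable --- is the main obstacle, and it is resolved precisely by Eberlein--Šmulian: the set $\Lambda^-(C)$ is contained in the weakly compact set $D$, hence relatively weakly compact, and a relatively weakly compact, weakly sequentially closed subset of a Banach space is weakly sequentially compact, therefore weakly compact, and thus weakly closed. This establishes that $\Lambda^-(C)$ is weakly closed, so $\Lambda$ is upper semicontinuous as a map $(D,w)\to 2^{(D,w)}$. With all hypotheses of the Fan--Glicksberg fixed point theorem verified, it delivers a point $u\in D$ with $u\in\Lambda(u)$, which is the desired fixed point. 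The only genuinely delicate step is this sequential-to-topological upgrade; everything else is a routine check of the hypotheses against the weak topology.
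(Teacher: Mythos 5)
Your proof is correct, but note that there is nothing in the paper to compare it against: the paper does not prove Theorem \ref{fpt}, it simply quotes it from Papageorgiou--Kyritsi-Yiallourou \cite[Theorem 2.6.7]{Papageorgiou-Kyritsi-Yiallourou-2009}. Judged on its own merits, your argument is the standard derivation and every step checks out: $D$ is weakly compact by reflexivity together with Mazur's theorem; the values, being norm-closed and convex, are weakly closed and hence weakly compact; the sequential closed-graph hypothesis combines with weak sequential compactness of $D$ (Eberlein--\v{S}mulian) exactly as you say to give weak sequential closedness of $\Lambda^-(C)$; and your ``upgrade'' is valid --- a weakly sequentially closed subset of a weakly compact set is weakly sequentially compact, hence weakly compact by Eberlein--\v{S}mulian, hence weakly closed because the weak topology is Hausdorff --- so $\Lambda$ is u.s.c.\ from $(D,w)$ into $2^{(D,w)}$ by Proposition \ref{uscproposition}(ii), and Fan--Glicksberg applies. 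A slightly shorter packaging of the same idea, which you may prefer: perform the sequential-to-topological upgrade just once, applied to $\graph(\Lambda)$ itself, which is a weakly sequentially closed subset of the weakly compact set $D\times D$ in the Banach space $Y\times Y$; this shows $\graph(\Lambda)$ is weakly compact, hence weakly closed, and then one invokes the standard fact that a closed-graph multimap with values in a compact space is automatically u.s.c. Either way the two uses of Eberlein--\v{S}mulian (extraction of weakly convergent subsequences, and the compactness-from-sequential-compactness direction) are exactly where the reflexivity hypothesis enters, and you have identified that correctly as the delicate point.
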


\section{Main results}\label{Section3}

This section is devoted to the main results of this paper proving the existence of weak solutions to problem \eqref{eqn1}. First we state the hypotheses on the data of problem \eqref{eqn1}.

\begin{enumerate}
	\item[\textnormal{H($f$):}]
	$f\colon \Omega\times \R\times \R^N\to \R$ is a Carath\'eodory function which satisfies the following conditions:
	\begin{enumerate}
		\item[\textnormal{(i)}]
		there exist $a_f,b_f\ge 0$ and a function $\alpha_f\in L^{\frac{q_1}{q_1-1}}(\Omega)_+$ satisfying
		\begin{align*}
			|f(x,s,\xi)|\le a_f|\xi|^{\frac{p(q_1-1)}{q_1}}+b_f|s|^{q_1-1}+\alpha_f(x)
		\end{align*}
		for a.\,a.\,$x\in \Omega$, for all $s\in\R$ and for all $\xi\in\R^N$, where $1<q_1<p^*$ and $p^*$ is the critical exponents to $p$ in the domain given in \eqref{defp1*};
		\item[\textnormal{(ii)}]
		there exist $c_f,d_f\ge 0$, $\theta_1,\theta_2\in[1,p]$  and a function $\beta_f\in L^1(\Omega)_+$ such that
		\begin{align*}
			f(x,s,\xi)s\le c_f|\xi|^{\theta_1}+d_f|s|^{\theta_2}+\beta_f(x)
		\end{align*}
		for a.\,a.\,$x\in \Omega$, for all $s\in \R$ and for all $\xi\in \R^N$;
		\item[\textnormal{(iii)}]
		there exist $e_f,h_f\ge0$ such that
		\begin{align*}
			(f(x,s,\xi)-f(x,t,\xi))(s-t)&\leq e_f|s-t|^p\\
			|f(x,s,\xi_1)-f(x,s,\xi_2)|&\leq h_f|\xi_1-\xi_2|^{p-1}
		\end{align*}
		for a.\,a.\,$x\in \Omega$, for all $s,t\in\R$ and for all $\xi,\xi_1,\xi_2\in \R^N$.
	\end{enumerate}
\end{enumerate}

\begin{enumerate}
	\item[\textnormal{H($U_1$):}]
	$U_1\colon \Omega\times \R \to 2^\R$ satisfies the following conditions:
	\begin{enumerate}
		\item[\textnormal{(i)}]
		$U_1(x,s)$ is a nonempty, bounded, closed and convex set in $\R$ for a.\,a.\,$x\in \Omega$ and all $s\in\R$;
		\item[\textnormal{(ii)}]
		$x\mapsto U_1(x,s)$ is measurable in $\Omega$ for all $s\in\R$;
		\item[\textnormal{(iii)}]
		$s\mapsto U_1(x,s)$ is u.s.c;
		\item[\textnormal{(iv)}]
		there exist $\theta_3\in [1,p]$, $\alpha_{U_1}\in L^{p'}(\Omega)_+$ and $\beta_{U_1}>0$ such that
		\begin{align*}
			|U_1(x,s)|\le \alpha_{U_1}(x)+\beta_{U_1}|s|^{\theta_3-1}
		\end{align*}
		for a.\,a.\,$x\in \Omega$ and for all $s\in\R$.
	\end{enumerate}
\end{enumerate}

\begin{enumerate}
	\item[\textnormal{H($U_2$):}]
	$U_2\colon \Gamma_2\times \R \to 2^\R$ satisfies the following conditions:
	\begin{enumerate}
		\item[\textnormal{(i)}]
		$U_2(x,s)$ is a nonempty, bounded,  closed and convex set in $\R$ for a.\,a.\,$x\in \Gamma_2$ and all $s\in\R$;
		\item[\textnormal{(ii)}]
		$x\mapsto U_2(x,s)$ is measurable on $\Gamma_2$ for all $s\in\R$;
		\item[\textnormal{(iii)}]
		$s\mapsto U_2(x,s)$ is u.s.c;
		\item[\textnormal{(iv)}]
		there exist $\theta_4\in [1,p]$, $\alpha_{U_2}\in L^{p'}(\Gamma_2)_+$ and $\beta_{U_2}>0$ such that
		\begin{align*}
			|U_2(x,s)|\le \alpha_{U_2}(x)+\beta_{U_2}|s|^{\theta_4-1}
		\end{align*}
		for a.\,a.\,$x\in \Gamma_2$ and for all $s\in\R$.
	\end{enumerate}
\end{enumerate}

\begin{enumerate}
	\item[\textnormal{H($\phi$):}]
	$\phi\colon \Gamma_3\times \R\to \R$ satisfies the following conditions:
	\begin{enumerate}
		\item[\textnormal{(i)}]
		$x\mapsto \phi(x,r)$ is measurable on $\Gamma_3$ for all $r\in\R$ such that $x\mapsto \phi(x,0)$ belongs to $L^1(\Gamma_3)$;
		\item[\textnormal{(ii)}]
		for a.\,a.\,$x\in \Gamma_3$, $r\mapsto \phi(x,r)$ is convex and l.s.c.
	\end{enumerate}
\end{enumerate}

\begin{enumerate}
	\item[\textnormal{H($L$):}]
	$L\colon V\to \R$ is positively homogeneous and subadditive such that
	\begin{align}\label{eqna2}
		L(u)\leq \limsup_{n\to\infty} L(u_n)
	\end{align}
	whenever $\{u_n\}_{n\in\N}\subset V$ is such that $u_n\wto u$ in $V$ for some $u\in V$.
\end{enumerate}

\begin{enumerate}
	\item[\textnormal{H($J$):}]
	$J\colon V\to (0,+\infty)$ is weakly continuous, that is, for any sequence $\{u_n\}_{n\in\N}\subset V$ such that $u_n\wto u$ for some $u\in V$, we have
	\begin{align*}
		J(u_n)\to J(u).
	\end{align*}
\end{enumerate}

\begin{enumerate}
	\item[\textnormal{H($0$):}]
	The inequalities
	\begin{align*}
		\max\{e_f, h_f\lambda^{\frac{1}{p}}\}<k(p)
		\quad\text{and}\quad
		\max\{c_f\delta(\theta_1),d_f\lambda \delta(\theta_2)+\beta_{U_1} \delta(\theta_3) +\beta_{U_2}\delta(\theta_4)\} <1
	\end{align*}
	hold, where $k(p)>0$ is given in \eqref{lemmaineqp}, $\lambda,\rho$ are given in \eqref{lamrho} and $\delta\colon[1,p]\to\{1,0\}$ is defined by
	\begin{align*}
		\delta(\theta)=
		\begin{cases}
			1 &\text{if }\theta=p,\\
			0 & \text{otherwise}.
		\end{cases}
	\end{align*}
\end{enumerate}

\begin{remark}\label{remark1}
	The homogeneity and subadditivity of $L$ reveals that $L$ is a convex function. So, it is not difficult to verify that  if $L\colon V\to \R$ is lower semicontinuous, then inequality \eqref{eqna2} holds automatically.
\end{remark}

We consider the  following multivalued map $K\colon V\to 2^{V}$ given by
\begin{align}\label{defK}
	K(u):=\left\{v\in V\,:\, L(v)\le J(u)\right\}
\end{align}
for all $u\in V$.

Next, we give the definition of a weak solution to problem \eqref{eqn1}.

\begin{definition}\label{weaksol}
	A function $u\in V$ is said to be a weak solution of problem \eqref{eqn1}, if  there exist functions $\eta\in L^{p'}(\Omega)$, $\xi\in L^{p'}(\Gamma_2)$ with $\eta(x)\in U_1(x,u(x))$ for a.\,a.\,$x\in \Omega$, $\xi(x)\in U_2(x,u(x))$ for a.\,a.\,$x\in \Gamma_2$ and if
	\begin{align*}
		&\int_\Omega\left(|\nabla u|^{p-2}\nabla u+\mu(x)|\nabla u|^{q-2}\nabla u\r)\cdot \nabla(v-u) \,\diff x+\int_{\Gamma_3}\phi(x,v)\,\diff\Gamma
		-\int_{\Gamma_3}\phi(x,u)\,\diff\Gamma\\
		&+\int_\Omega\left(|u|^{p-2}u+\mu(x)|u|^{q-2}u\right)(v-u)\,\diff x\\
		&\ge \int_\Omega \eta(x)(v-u)\,\diff x+\int_{\Gamma_2} \xi(x)(v-u)\,\diff\Gamma
		+\int_\Omega f(x,u,\nabla u)(v-u)\,\diff x
	\end{align*}
	is satisfied for all $v\in K(u)$, where the multivalued function $K\colon V\to 2^V$ is defined by \eqref{defK}.
\end{definition}

The following lemma states several properties of $K$, which will be used in the sequel.

\begin{lemma}\label{lemma1}
	Let $J\colon V\to (0,+\infty)$ and $L\colon V\to \R$ be two functions such that  \textnormal{H($L$)} and \textnormal{H($J$)} are satisfied. Then, the following statements hold:
	\begin{enumerate}
		\item[{\rm(i)}]
		for each $u\in V$, $K(u)$ is closed and convex in $V$ such that $0\in K(u)$;
		\item[{\rm(ii)}]
		the graph $\graph(K)$ of $K$ is sequentially closed in $V_w\times V_w$, that is, $K$ is sequentially closed from $V$ with the weak topology into the subsets of  $V$ with the weak topology;
		\item[{\rm(iii)}]
		if  $\{u_n\}_{n\in\N}\subset V$ is a sequence such that
		\begin{align*}
			u_n\wto u\quad \text{in }V \quad\text{as}\quad n\to\infty
		\end{align*}
		for some $u\in V$, then for each $v\in K(u)$ there exists a sequence  $\{v_n\}_{n\in\N}\subset V$ such that
		\begin{align*}
			v_n\in K(u_n)\quad \text{and}\quad v_n\to v \quad \text{ in }V \quad\text{as}\quad n\to\infty.
		\end{align*}
	\end{enumerate}
\end{lemma}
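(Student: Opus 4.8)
The plan is to verify the three claims in turn, relying only on positive homogeneity and subadditivity of $L$, the strict positivity and weak continuity of $J$ from H($J$), and the one-sided $\limsup$-estimate built into H($L$). For (i), I would first record that positive homogeneity forces $L(0)=0$: writing $L(0)=L(2\cdot 0)=2L(0)$ gives $L(0)=0$, and since $J(u)>0$, this yields $L(0)=0<J(u)$, i.e.\ $0\in K(u)$. Convexity is immediate from the observation in Remark \ref{remark1} that $L$ is convex; concretely, for $v_1,v_2\in K(u)$ and $t\in[0,1]$, subadditivity and positive homogeneity give $L(tv_1+(1-t)v_2)\le tL(v_1)+(1-t)L(v_2)\le J(u)$. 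For closedness I would take $v_n\in K(u)$ with $v_n\to v$ in $V$; then $v_n\weak v$, so H($L$) yields $L(v)\le\limsup_{n\to\infty}L(v_n)\le J(u)$ because $L(v_n)\le J(u)$ for every $n$, whence $v\in K(u)$. (This argument in fact gives weak sequential closedness of $K(u)$.)

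For (ii), I would take $(u_n,v_n)\in\graph(K)$ with $u_n\weak u$ and $v_n\weak v$ in $V$, so that $L(v_n)\le J(u_n)$ for all $n$. Weak continuity of $J$ gives $J(u_n)\to J(u)$, while the $\limsup$-estimate in H($L$) applied to $v_n\weak v$ gives
$L(v)\le\limsup_{n\to\infty}L(v_n)\le\limsup_{n\to\infty}J(u_n)=J(u)$.
Thus $v\in K(u)$, i.e.\ $(u,v)\in\graph(K)$, which is exactly the asserted sequential closedness of the graph in $V_w\times V_w$.

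Statement (iii) is the one requiring an actual construction, and is where I expect the only genuine difficulty. A naive attempt $v_n:=v$ fails: membership $v\in K(u)$ only guarantees $L(v)\le J(u)$, and the values $J(u_n)$ may dip strictly below $L(v)$, so that $v\notin K(u_n)$. The remedy is to exploit that each $K(u_n)$ is star-shaped about $0$ and to rescale $v$ slightly toward the origin. Concretely, I would set $t_n:=1$ if $L(v)\le 0$ and $t_n:=\min\{1,\,J(u_n)/L(v)\}$ if $L(v)>0$, and define $v_n:=t_n v$. Positive homogeneity then gives $L(v_n)=t_nL(v)\le J(u_n)$ in both cases (using $J(u_n)>0$ in the first), so $v_n\in K(u_n)$. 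Since $J(u_n)\to J(u)$ and $J(u)\ge L(v)$ because $v\in K(u)$, one checks $t_n\to 1$, whence $v_n=t_nv\to v$ strongly in $V$.

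The crux of the whole lemma is thus the interplay between the two one-sided hypotheses: H($L$) controls limits of $L$ from above along weakly convergent sequences, which is exactly the right direction for the inequalities defining $K$, while H($J$) controls the moving right-hand side $J(u_n)$. The only quantitative step is the rescaling in (iii), where the point is simply that the sublevel constraint is invariant under shrinking toward $0$ (by positive homogeneity together with $J>0$), so a one-parameter rescaling suffices and no delicate approximation scheme is needed.
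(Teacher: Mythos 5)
Your proof is correct and follows essentially the same route as the paper: parts (i) and (ii) are the paper's arguments almost verbatim, and part (iii) again rescales $v$ toward the origin using positive homogeneity of $L$ together with the weak continuity of $J$. The only difference is the scaling factor: the paper takes $v_n=\frac{J(u_n)}{J(u)}\,v$, which handles every sign of $L(v)$ uniformly (since $L(v)\le J(u)$ and $\frac{J(u_n)}{J(u)}>0$ give $L(v_n)\le J(u_n)$ at once), whereas your truncated factor $t_n=\min\bigl\{1,\,J(u_n)/L(v)\bigr\}$ needs the case split on the sign of $L(v)$; both are equally valid.
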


\begin{proof}
	\textnormal{(i)}
	Note that $J(u)>0$ for each $u\in V$ and $L$ is positively homogeneous. So we have $0=L(0_V)<J(u)$ for each $u\in V$, thus, $K(u)\neq\emptyset$ for every $u\in V$. The convexity of $L$ (see Remark~\ref{remark1}) deduces directly that $K(u)$ is convex for each $u\in V$. On the other hand, inequality (\ref{eqna2}) implies that $K(u)$  is closed for each $u\in V$.
	
	\textnormal{(ii)}
	Let $\{(u_n,v_n)\}_{n\in\N}\subset V\times V$ be a sequence  and $(u,v)\in V\times V$ such that $v_n\in K(u_n)$ for each $n\in\mathbb N$ and
	\begin{align*}
		(u_n,v_n)\wto (u,v)\quad \text{in } V\times V\quad \text{as}\quad n\to\infty.
	\end{align*}
	Then, for each $n\in\mathbb N$, it holds $v_n\in K(u_n)$, namely, $L(v_n)\le J(u_n)$. Keeping in mind that $J$ is weakly continuous on $V$ (see hypotheses \textnormal{H($J$)}), we obtain
	\begin{align}\label{eqn3.02}
		\lim_{n\to \infty }J(u_n)=J(u).
	\end{align}
	However, hypotheses \textnormal{H($L$)} turn out
	\begin{align}\label{eqn3.03}
		L(v)\le \limsup_{n\to \infty} L(v_n).
	\end{align}
	Combining \eqref{eqn3.02} and \eqref{eqn3.03} yields
	\begin{align*}
		L(v)\le \limsup_{n\to \infty} L(v_n)\le \limsup_{n\to \infty} J(u_n)=J(u).
	\end{align*}
	This means that $v\in K(u)$, that is, $(u,v)\in \graph(K)$. Therefore, we conclude that $\graph(K)$ is sequentially closed in $V_w\times V_w$.
	
	\textnormal{(iii)}
	Let $\{u_n\}_{n\in\N}\subset V$ be a sequence such that
	\begin{align*}
		u_n\wto u\quad \text{in }V \quad\text{as}\quad n\to\infty
	\end{align*}
	for some $u\in V$ and let $v\in K(u)$ be arbitrary. Because of $J(w)>0$ for every $w\in V$, we construct the sequence $\{v_n\}_{n\in\N}\subset V$ given by
	\begin{align*}
		v_n=\frac{J(u_n)}{J(u)}v.
	\end{align*}
	Due to $v\in K(u)$ (i.e., $L(v)\le J(u)$), it follows from  hypotheses \textnormal{H($L$)} that
	\begin{align*}
		L(v_n)=L\left(\frac{J(u_n)}{J(u)}v\right)=\frac{J(u_n)}{J(u)}L(v)\le \frac{J(u_n)}{J(u)}J(u)=J(u_n).
	\end{align*}
	Therefore, $v_n\in K(u_n)$. Moreover, a simply calculation shows
	\begin{align*}
		\|v_n-v\|_V=\left|\frac{J(u_n)}{J(u)}-1\right|\|v\|_V=0.
	\end{align*}
	Hence, we get that $v_n\to v$ in $ V$ as $n\to\infty$. This completes the proof of the lemma.
\end{proof}

Let $X=L^p(\Omega)\times L^p(\Gamma_2)$ and $(\eta,\xi)\in X^*:=L^{p'}(\Omega)\times L^{p'}(\Gamma_2)$, $w\in V$ be arbitrary. We introduce the following nonlinear auxiliary elliptic unilateral obstacle system
\begin{equation}\label{eqn3.1}
	\begin{aligned}
		-\divergenz\big(|\nabla u|^{p-2}\nabla u+\mu(x)|\nabla u|^{q-2}\nabla u\big)&\quad&&\\
		+|u|^{p-2}u+\mu(x)|u|^{q-2}u&= \eta(x)+f(x,u,\nabla u)  \quad && \text{in } \Omega,\\
		u &=  0 &&\text{on } \partial \Gamma_1,\\
		\frac{\partial u(x)}{\partial \nu_a}&=\xi(x)  &&\text{on } \partial \Gamma_2,\\
		-\frac{\partial u(x)}{\partial \nu_a} &\in\partial_c\phi(x,u) &&\text{on } \partial \Gamma_3,\\
		L(u) &\leq J(w).
	\end{aligned}
\end{equation}

By virtue of Definition \ref{weaksol}, a function $u\in V$ is said to be a weak solution of problem \eqref{eqn3.1}, if the following holds: $u\in K(w)$ and
\begin{align*}
	\begin{split}
		&\int_\Omega\left(|\nabla u|^{p-2}\nabla u+\mu(x)|\nabla u|^{q-2}\nabla u\r)\cdot \nabla(v-u)\,\diff x
		+\int_{\Gamma_3}\phi(x,v)\,\diff\Gamma-\int_{\Gamma_3}\phi(x,u)\,\diff\Gamma\\
		&+\int_\Omega\left(|u|^{p-2}u+\mu(x)|u|^{q-2}u\right)(v-u)\,\diff x\\
		&\geq \int_\Omega \eta(x)(v-u)\,\diff x+\int_{\Gamma_2} \xi(x)(v-u)\,\diff\Gamma
		+\int_\Omega f(x,u,\nabla u)(v-u)\,\diff x
	\end{split}
\end{align*}
for all $v\in K(w)$.

Next, we prove the existence and uniqueness of weak solutions to problem \eqref{eqn3.1}.

\begin{theorem}\label{Theorems1}
	Let $p\ge 2$. Suppose that \eqref{conditions-p-q-mu}, \textnormal{H($f$)}, \textnormal{H($L$)} and \textnormal{H($\phi$)} are satisfied. If, in addition, the inequality $\max\{e_f, h_f\lambda^{\frac{1}{p}}\}< k(p)$ holds , then, for every $(w,(\eta,\xi))\in V\times X^*$, there exists a unique  weak  solution $u\in K(w)$ of problem \eqref{eqn3.1}.
\end{theorem}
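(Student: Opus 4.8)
The strategy is to recast \eqref{eqn3.1} as a single mixed variational inequality driven by a pseudomonotone operator on the (possibly unbounded) constraint set $K(w)$, to solve it via Theorem~\ref{Brodersurjectivity} after a truncation, and to deduce uniqueness from a difference estimate. Since $w$ is fixed, $J(w)>0$ is a fixed number and, by Lemma~\ref{lemma1}(i), $K(w)$ is a nonempty (it contains $0$), closed and convex subset of $V$. I introduce $G\colon V\to V^*$ by $G:=A-N_f$, where $A$ is as in \eqref{defA} and $N_f(u):=f(\cdot,u,\nabla u)$; by H($f$)(i) together with Proposition~\ref{proposition_embeddings}(ii) (the continuous embedding $V\hookrightarrow \Lp{q_1}$, $q_1<p^*$), the Nemytskii map $N_f$ sends $V$ boundedly and continuously into $L^{q_1'}(\Omega)\hookrightarrow V^*$. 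I set $\Phi(v):=\int_{\Gamma_3}\phi(x,v)\,\diff\Gamma$, which is proper, convex and lower semicontinuous on $V$ by H($\phi$) (finiteness at $0$ uses $\phi(\cdot,0)\in L^1(\Gamma_3)$, and a convex affine minorant bounds it from below), the traces being controlled by Proposition~\ref{proposition_embeddings}(iii); the fixed data are regarded as $\eta+\xi\in V^*$. A weak solution of \eqref{eqn3.1} is then exactly a $u\in K(w)$ with
\begin{align*}
\langle G(u)-(\eta+\xi),v-u\rangle+\Phi(v)-\Phi(u)\ge 0\qquad\text{for all }v\in K(w).
\end{align*}

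First I would verify that $G$ is pseudomonotone in the sense of Br\'ezis, in fact of type $(\Ss_+)$. By Proposition~\ref{prop1}, $A$ is bounded, continuous and of type $(\Ss_+)$. If $u_n\weak u$ in $V$, then $u_n\to u$ in $\Lp{q_1}$ by the compact embedding in Proposition~\ref{proposition_embeddings}(ii), while $\{N_f(u_n)\}$ remains bounded in $L^{q_1'}(\Omega)$, so $\langle N_f(u_n),u_n-u\rangle\to 0$ by H\"older's inequality. Consequently $\limsup_{n\to\infty}\langle G(u_n),u_n-u\rangle\le 0$ forces $\limsup_{n\to\infty}\langle A(u_n),u_n-u\rangle\le 0$, and the $(\Ss_+)$ property yields $u_n\to u$ in $V$; since $G$ is bounded and demicontinuous, pseudomonotonicity follows. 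This is the point where the gradient dependence of the convection is handled: $N_f$ is not compact, but only the pairing $\langle N_f(u_n),u_n-u\rangle$ (not $N_f(u_n)$ itself) needs to vanish, which the compact embedding supplies.

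Next I would establish coercivity and then existence. Writing $\langle A(u),u\rangle=\rho_{\mathcal H}(\nabla u)+\rho_{\mathcal H}(u)$ and using Proposition~\ref{inesp}, the growth bound H($f$)(ii) with $\theta_1,\theta_2\in[1,p]$ keeps the convection contribution of order at most $p$, while $\langle\eta+\xi,\cdot\rangle$ is linear and $\Phi$ is affinely minorized; hence $\langle G(u)-(\eta+\xi),u\rangle\ge\|u\|_V^p-(\text{lower order})$ for $\|u\|_V$ large, so that $G-(\eta+\xi)$ is coercive (here $p\ge 2$ enters). Because $K(w)$ may be unbounded, I would apply Theorem~\ref{Brodersurjectivity} not to $K(w)$ but to the bounded, closed and convex sets $C_n:=K(w)\cap\overline B_n(0)$ (each containing $0$), obtaining $u_n\in C_n$ solving the inequality on $C_n$. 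Testing with $v=0\in C_n$ and invoking coercivity gives a bound $\|u_n\|_V\le M$ independent of $n$. Fixing $n_0>M$ and, for any $v\in K(w)$, inserting the admissible test functions $u_{n_0}+t(v-u_{n_0})\in C_{n_0}$ with $t\downarrow 0$, together with the convexity of $\Phi$, a division by $t$ shows that $u:=u_{n_0}\in K(w)$ solves the inequality for all $v\in K(w)$. I expect this unboundedness of $K(w)$, forcing the truncation-and-localization detour, to be the main obstacle of the proof.

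For uniqueness, let $u_1,u_2\in K(w)$ be two solutions. Testing the inequality for $u_1$ with $v=u_2$ and that for $u_2$ with $v=u_1$ and adding, the contributions of $\Phi$ and of $\eta+\xi$ cancel, leaving
\begin{align*}
\langle A(u_1)-A(u_2),u_1-u_2\rangle\le\into\big(f(x,u_1,\nabla u_1)-f(x,u_2,\nabla u_2)\big)(u_1-u_2)\,\diff x.
\end{align*}
Since $p\ge 2$, applying \eqref{lemmaineqp} to both the gradient and the lower order part of $A$ bounds the left-hand side below by $k(p)\big(\|\nabla(u_1-u_2)\|_{p,\Omega}^p+\|u_1-u_2\|_{p,\Omega}^p\big)$. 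Splitting the integrand as $f(x,u_1,\nabla u_1)-f(x,u_2,\nabla u_1)$ plus $f(x,u_2,\nabla u_1)-f(x,u_2,\nabla u_2)$ and using the two estimates in H($f$)(iii), H\"older's inequality and the Poincar\'e-type inequality \eqref{lamrho}, the right-hand side is at most $e_f\|u_1-u_2\|_{p,\Omega}^p+h_f\lambda^{\frac1p}\|\nabla(u_1-u_2)\|_{p,\Omega}^p$. Hence
\begin{align*}
\big(k(p)-h_f\lambda^{\frac1p}\big)\|\nabla(u_1-u_2)\|_{p,\Omega}^p+\big(k(p)-e_f\big)\|u_1-u_2\|_{p,\Omega}^p\le 0,
\end{align*}
and the standing assumption $\max\{e_f,h_f\lambda^{1/p}\}<k(p)$ makes both coefficients positive, forcing $u_1=u_2$.
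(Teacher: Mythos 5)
Your proposal is correct and follows essentially the same route as the paper's proof: the same operator decomposition $T=A-\iota^*N_f$ with the convex functional $\varphi(v)=\int_{\Gamma_3}\phi(x,v)\,\diff\Gamma$, the same truncation to the bounded, closed, convex sets $K_n=K(w)\cap\overline{B(0,n)}$ handled by Theorem~\ref{Brodersurjectivity}, the same a priori bound obtained by testing with $v=0\in K(w)$, the same passage from the truncated set back to all of $K(w)$ via convex combinations $u_{n_0}+t(v-u_{n_0})$, and the same cross-testing uniqueness estimate based on \eqref{lemmaineqp}, \textnormal{H($f$)(iii)} and \eqref{lamrho}. The only cosmetic deviations are that you verify the pseudomonotonicity of $A-N_f$ directly where the paper cites Gasi\'nski--Winkert, and you phrase the uniform bound as coercivity where the paper argues by contradiction; note that in the boundary cases $\theta_1=p$ or $\theta_2=p$ your coercivity claim (like the paper's own estimate) tacitly needs the smallness condition on $c_f,d_f$ from \textnormal{H($0$)}, which is not listed among the theorem's stated hypotheses.
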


\begin{proof}
	{\bf Uniqueness:}
	Let $(w,(\eta,\xi))\in V\times X^*$  be fixed and assume that the solution set of  problem \eqref{eqn3.1} associated with $(w,(\eta,\xi))\in V\times X^*$ is nonempty.  Let $u_1,u_2\in V$ be two weak solutions of  problem \eqref{eqn3.1} corresponding to $(w,(\eta,\xi))\in V\times X^*$. This means that $u_i\in K(w)$ and
	\begin{align*}
		&\int_\Omega\left(|\nabla u_i|^{p-2}\nabla u_i+\mu(x)|\nabla u_i|^{q-2}\nabla u_i\r)\cdot \nabla(v-u_i)\,\diff x
		+\int_{\Gamma_3}\phi(x,v)\,\diff\Gamma\\
		&-\int_{\Gamma_3}\phi(x,u_i)\,\diff\Gamma+\int_\Omega\left(|u_i|^{p-2}u_i+\mu(x)|u_i|^{q-2}u_i\right)(v-u_i)\,\diff x\\
		&\ge \int_\Omega \eta(x)(v-u_i)\,\diff x
		+\int_{\Gamma_2} \xi(x)(v-u_i)\,\diff\Gamma
		+\int_\Omega f(x,u,\nabla u_i)(v-u_i)\,\diff x
	\end{align*}
	for all $v\in K(w)$ and for $i=1,2$.  Taking $v=u_2$ and $v=u_1$ for the inequalities above with $i=1$ and $i=2$, respectively, we sum up the resulting inequalities to get
	\begin{align*}
		&\int_\Omega\left(|\nabla u_1|^{p-2}\nabla u_1+\mu(x)|\nabla u_1|^{q-2}\nabla u_1\r)\cdot\nabla(u_1-u_2)\,\diff x\\
		&\quad-\int_\Omega\left(|\nabla u_2|^{p-2}\nabla u_2+\mu(x)|\nabla u_2|^{q-2}\nabla u_2\r)\cdot \nabla(u_1-u_2)\,\diff x\\
		&\quad+\int_\Omega\left(|u_1|^{p-2}u_1+\mu(x)|u_1|^{q-2}u_1\right)(u_1-u_2)\,\diff x\\
		&\quad-\int_\Omega\left(|u_2|^{p-2}u_2+\mu(x)|u_2|^{q-2}u_2\right)(u_1-u_2)\,\diff x\\
		&\le \int_\Omega (f(x,u_1,\nabla u_1)-f(x,u_2,\nabla u_2))(u_1-u_2)\,\diff x\\
		&=\int_\Omega (f(x,u_1,\nabla u_1)-f(x,u_2,\nabla u_1))(u_1-u_2)\,\diff x\\
		&\quad +\int_\Omega (f(x,u_2,\nabla u_1)-f(x,u_2,\nabla u_2))(u_1-u_2)\,\diff x.
	\end{align*}
	Taking \eqref{lemmaineqp}, \eqref{conditions-p-q-mu} and \textnormal{H($f$)(iii)} into account implies
	\begin{align*}
		& k(p)\left(\|\nabla u_1-\nabla u_2\|_{p,\Omega}^p+\| u_1- u_2\|_{p,\Omega}^p\right)\\
		&\leq \int_\Omega e_f|u_1-u_2|^p\,\diff x+\int_\Omega h_f|\nabla u_1-\nabla u_2|^{p-1}|u_1-u_2|\,\diff x.
	\end{align*}
	Applying H\"older's inequality and \eqref{lamrho} gives
	\begin{align*}
		&k(p)\left(\|\nabla u_1-\nabla u_2\|_{p,\Omega}^p+\| u_1- u_2\|_{p,\Omega}^p\right)\\
		&\leq e_f\|u_1-u_2\|_{p,\Omega}^p+ h_f\|\nabla u_1-\nabla u_2\|_{p,\Omega}^{p-1}\|u_1-u_2\|_{p,\Omega}\\
		& \le e_f \| u_1- u_2\|_{p,\Omega}^p+ h_f\lambda^{\frac{1}{p}}\|\nabla u_1-\nabla u_2\|_{p,\Omega}^p.
	\end{align*}
	Hence,
	\begin{align*}
		\min\left\{(k(p)-e_f),(k(p)-h_f\lambda^{\frac{1}{p}})\right\}\left(\|\nabla u_1-\nabla u_2\|_{p,\Omega}^p+\| u_1- u_2\|_{p,\Omega}^p\right)\le0.
	\end{align*}
	By assumption, we know that $\max\{e_f, h_f\lambda^{\frac{1}{p}}\}< k(p)$, thus $u_1=u_2$. Therefore, for each $(w,(\eta,\xi))\in V\times X^*$  problem \eqref{eqn3.1} has a unique weak solution $u\in V$ provided the solution set of problem \eqref{eqn3.1} is nonempty.
	
	{\bf Existence:}
	Let $n\in \mathbb N$ be fixed such that $K_n:=\overline {B(0,n)}\cap K(w)\neq\emptyset$, where $\overline{B(0,n)}$ is the closed ball centered at the origin with radius $n>0$, that is, $\overline{B(0,n)}:=\{u\in V\,:\,\|u\|_V\le n\}$.
	We first consider the following auxiliary problem: find $u_n\in K_n$ such that
	\begin{align}\label{eqns3.03}
		\begin{split}
			&\int_\Omega\left(|\nabla u_n|^{p-2}\nabla u_n+\mu(x)|\nabla u_n|^{q-2}\nabla u_n\r)\cdot \nabla(v-u_n)\,\diff x\\
			&+\int_\Omega\left(|u_n|^{p-2}u_n+\mu(x)|u_n|^{q-2}u_n\right)(v-u_n)\,\diff x\\
			&\quad +\int_{\Gamma_3}\phi(x,v)\,\diff\Gamma -\int_{\Gamma_3}\phi(x,u_n)\,\diff\Gamma\\
			& \geq \int_\Omega \eta(x)(v-u_n)\,\diff x
			+\int_{\Gamma_2} \xi(x)(v-u_n)\,\diff\Gamma\\
			&\quad +\int_\Omega f(x,u_n,\nabla u_n)(v-u_n)\,\diff x\quad\text{for all }v\in K_n.
		\end{split}
	\end{align}
	Let us consider the function $\varphi\colon V\to \R\cup\{+\infty\}$ defined by
	\begin{align}\label{defvarphi}
		\varphi(u):=\int_{\Gamma_3}\phi(x,u)\,\diff\Gamma \quad\text{for all }u\in V.
	\end{align}
	Moreover, let $N_f\colon V\subset L^{q_1}(\Omega)\to L^{q_1}(\Omega)$ be the Nemytskij operator associated to $f$ and let $\iota$ be the embedding operator from $V$ to $L^{q_1}(\Omega)$ with its adjoint operator $\iota^*\colon L^{q_1'}(\Omega)\to V^*$.  We set $T\colon V\to V^*$
	\begin{align*}
		Tu=Au-\iota^*N_f(u)\quad \text{for all }u\in V,
	\end{align*}
	where $A\colon V\to V^*$ is defined by \eqref{defA}. Based on this, it is easy to see that problem (\ref{eqns3.03}) can be rewritten equivalently to the following one: find $u_n\in K_n$ such that
	\begin{align}\label{eqns3.04}
		\langle Tu_n,v-u_n\rangle+\varphi(v)-\varphi(u_n)\ge \langle g,v-u_n\rangle
	\end{align}
	for all $v\in K_n$, where $g\in V^*$ is defined by (because of Riesz's representative theorem)
	\begin{align*}
		\langle g,w\rangle:= \int_\Omega \eta(x)w(x)\,\diff x+\int_{\Gamma_2} \xi(x)w(x)\,\diff\Gamma\quad \text{for all }w\in V.
	\end{align*}
	
	Note that $K_n$ is a bounded, closed and convex subset of $V$.  We are going to apply Theorem \ref{Brodersurjectivity} to prove that problem \eqref{eqns3.04} has at least one solution. By hypothesis \textnormal{H($\phi$)} it is obvious to see that $\varphi$ defined in \eqref{defvarphi} is a proper, convex and l.s.c.\,function. Reasoning as in the proof of Theorem 3.2 in Gasi\'nski-Winkert \cite{Gasinski-Winkert-2020b}, we conclude that $T$ is a bounded and pseudomonotone operator. Therefore, all conditions of Theorem \ref{Brodersurjectivity} are satisfied and so for each $n\in \mathbb N$ there exists one solution $u_n\in K_n$ of problem \eqref{eqns3.04}. Applying the same arguments as in the proof of uniqueness part, we conclude that $u_n$ is the unique solution to problem \eqref{eqns3.04}.
	
	Furthermore, we claim that there exists a number $N_0\in \mathbb N$ such that the unique solution $u_{N_0}$ of problem \eqref{eqns3.04} with $n=N_0$ satisfies the inequality
	\begin{align}\label{eqns3.05}
		\|u_{N_0}\|_V<N_0.
	\end{align}
	Let us suppose \eqref{eqns3.05} is not true. Then for each $n\in \mathbb N$ we have $\|u_n\|_V=n$. So, it holds $\|u_n\|_V\to +\infty$ as $n\to\infty$.
	
	Recall that $0\in K(w)$ (see Lemma \ref{lemma1}(i)), a simple calculation shows
	\begin{equation}\label{eqns3.07}
		\begin{split}
			&\int_\Omega|\nabla u_n|^p+\mu(x)|\nabla u_n|^q+|u_n|^p+\mu(x)|u_n|^{q}\,\diff x+\int_{\Gamma_3}\phi(x,u_n)\,\diff\Gamma\\
			&\le \int_\Omega \eta(x)u_n\,\diff x+\int_{\Gamma_2} \xi(x)u_n\,\diff\Gamma+\int_{\Gamma_3}\phi(x,0)\,\diff\Gamma
			+\int_\Omega f(x,u_n,\nabla u_n)u_n\,\diff x.
		\end{split}
	\end{equation}
	Since $\varphi$ is a proper, convex and l.s.c.\,function, there are constants $\alpha_\varphi>0$ and $\beta_\varphi\ge 0$ satisfying
	\begin{align}\label{eqns3.08}
		\varphi(v)\ge -\alpha_\varphi\|v\|_V-\beta_\varphi\text{ for all $v\in V$},
	\end{align}
	owing to Br\'{e}zis \cite[Proposition 1.10]{Brezis-2011}. Let $\varepsilon_1,\varepsilon_2>0$. Employing hypothesis \textnormal{H($f$)(ii)}, we have
	\begin{align}\label{eqns3.09}
		\begin{split}
			&\int_\Omega f(x,u_n,\nabla u_n)u_n(x)\,\diff x\\
			&\leq \int_\Omega c_f|\nabla u_n(x)|^{\theta_1}+d_f|u_n(x)|^{\theta_2}+\beta_f(x)\,\diff x\\
			&\leq \begin{cases}
				c_f\|\nabla u\|_{p,\Omega}^p+d_f\|u_n\|_{p,\Omega}^p+\|\beta_f\|_{1,\Omega}&\text{if }\theta_1=\theta_2=p,\\
				c_f\|\nabla u_n\|_{p,\Omega}^p+\varepsilon_1\|u_n\|_{p,\Omega}^p+c_1(\varepsilon_1)+\|\beta_f\|_{1,\Omega}&\text{if }\theta_1=p \text{ and }\theta_2<p,\\
				\varepsilon_2\|\nabla u_n\|_{p,\Omega}^p+c_2(\varepsilon_2)+d_f\|u_n\|_{p,\Omega}^p+\|\beta_f\|_{1,\Omega}&\text{if }\theta_1<p \text{ and }\theta_2=p,\\
				\varepsilon_2\|\nabla u_n\|_{p,\Omega}^p+c_2(\varepsilon_2)+\varepsilon_1\|u_n\|_{p,\Omega}^p+c_1(\varepsilon_1)+\|\beta_f\|_{1,\Omega}&\text{if }\theta_1<p \text{ and }\theta_2<p,
			\end{cases}
		\end{split}
	\end{align}
	for some $c_1(\varepsilon_1),c_2(\varepsilon_2)>0$.  Keeping in mind that $(\eta,\xi)\in X^*$, we use H\"older's inequality to get
	\begin{align}\label{eqns3.010}
		\int_\Omega \eta(x)u_n\,\diff x+\int_{\Gamma_2} \xi(x)u_n\,\diff\Gamma
		\le \|\eta\|_{p',\Omega}\|u_n\|_{p,\Omega} +\|\xi\|_{p',\Gamma_2}\|u_n\|_{p,\Gamma_2}.
	\end{align}
	Combining \eqref{eqns3.08}, \eqref{eqns3.09} and \eqref{eqns3.010} with \eqref{eqns3.07} and using \eqref{lamrho}, \eqref{conditions-p-q-mu} as well as Young's inequality we obtain
	\begin{align*}
		\int_\Omega|\nabla u_n|^p+\mu(x)|\nabla u_n|^q+|u_n|^p+\mu(x)|u_n|^{q}\,\diff x\le c_3
	\end{align*}
	for some $c_3>0$. Using Proposition~\ref{inesp}, it is not difficult to see that the inequality above is a contradiction to the fact that $\|u_n\|_V\to +\infty$ as $n\to\infty$. Therefore, there exists a number $N_0\in\mathbb N$ such that (\ref{eqns3.05}) is satisfied.
	
	For any $z\in K(w)$, we can choose $t\in(0,1)$ small enough such that $v_t=u_{N_0}+(1-t)z\in K_{N_0}$ thanks to (\ref{eqns3.05}). Taking $v=v_t$ in \eqref{eqns3.04} gives
	\begin{align*}
		\langle Tu_{N_0},z-u_{N_0}\rangle+\varphi(z)-\varphi(u_{N_0})\ge \langle g,z-u_{N_0}\rangle.
	\end{align*}
	Since $z\in K(w)$ is arbitrary, we see that $u_{N_0}\in K(w)$ is a weak solution of problem \eqref{eqn3.1}.
	
	In conclusion, we have shown that for every $(w,(\eta,\xi))\in V\times X^*$ problem \eqref{eqn3.1} has a unique solution.
\end{proof}

Particularly, when $J(w)=+\infty$ for all $w\in V$, then problem \eqref{eqn3.1} becomes the following non-obstacle nonlinear elliptic system with mixed boundary conditions: find $u\in V$ such that
\begin{equation}\label{eqns3.011}
	\begin{aligned}
		-\divergenz\big(|\nabla u|^{p-2}\nabla u+\mu(x)|\nabla u|^{q-2}\nabla u\big)&\quad&&\\
		+|u|^{p-2}u+\mu(x)|u|^{q-2}u&= \eta(x)+f(x,u,\nabla u)  \quad && \text{in } \Omega,\\
		u&=   0 &&\text{on } \Gamma_1,\\
		\frac{\partial u}{\partial \nu_a}&= \xi(x) &&\text{on } \Gamma_2,\\
		-\frac{\partial u}{\partial \nu_a}&\in \partial_c\phi(x,u) &&\text{on } \Gamma_3.\\
	\end{aligned}
\end{equation}

We have the following corollary for problem \eqref{eqns3.011}

\begin{corollary}
	Let $p\ge 2$. Assume that \eqref{conditions-p-q-mu}, \textnormal{H($f$)} and \textnormal{H($\phi$)}  are fulfilled. If the inequality $\max\{e_f, h_f\lambda^{\frac{1}{p}}\}<k(p)$ holds, then for each pair of function $(\eta,\xi)\in  X^*$,  problem (\ref{eqns3.011}) has a unique weak solution.
\end{corollary}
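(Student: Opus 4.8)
The plan is to recognize that problem \eqref{eqns3.011} is exactly the special case of problem \eqref{eqn3.1} in which the obstacle is vacuous, and then to re-run the argument of Theorem \ref{Theorems1} in this simplified setting. Formally, taking $J(w)\equiv+\infty$ forces the constraint set $K(w)=\{v\in V:L(v)\le J(w)\}$ to coincide with the whole space $V$, independently of $w$; both the dependence on $w$ and the function $L$ disappear, which is why the corollary may drop hypotheses \textnormal{H($L$)} and \textnormal{H($J$)}. Note that one cannot literally invoke Theorem \ref{Theorems1}, since $J\equiv+\infty$ violates \textnormal{H($J$)} (which requires finite values in $(0,+\infty)$); hence I would reproduce its proof verbatim with $K(w)$ replaced by $V$ throughout, the weak formulation of \eqref{eqns3.011} being the mixed variational inequality over all $v\in V$ carrying the convex term $\varphi(u)=\int_{\Gamma_3}\phi(x,u)\,\diff\Gamma$ (which encodes the subdifferential condition $-\partial u/\partial\nu_a\in\partial_c\phi(x,u)$ on $\Gamma_3$).

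For uniqueness I would copy the computation of Theorem \ref{Theorems1}. Given two weak solutions $u_1,u_2\in V$, I test the inequality for $u_1$ with $v=u_2$ and that for $u_2$ with $v=u_1$ (both admissible since now $K=V$) and add; the boundary terms $\int_{\Gamma_3}\phi(x,\cdot)\,\diff\Gamma$ as well as the $\eta,\xi$ contributions cancel, leaving the difference of the double phase operators against the difference of the convection terms. Applying Simon's inequality \eqref{lemmaineqp} (which is where $p\ge 2$ enters), hypothesis \textnormal{H($f$)(iii)}, Hölder's inequality and \eqref{lamrho} then yields
\begin{align*}
\min\left\{k(p)-e_f,\, k(p)-h_f\lambda^{\frac{1}{p}}\right\}\left(\|\nabla u_1-\nabla u_2\|_{p,\Omega}^p+\|u_1-u_2\|_{p,\Omega}^p\right)\le 0,
\end{align*}
so the hypothesis $\max\{e_f,h_f\lambda^{1/p}\}<k(p)$ gives $u_1=u_2$.

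For existence I would set $T=A-\iota^*N_f\colon V\to V^*$ and $\varphi$ as above, exactly as in the proof of Theorem \ref{Theorems1}; here $T$ is bounded and pseudomonotone, and $\varphi$ is proper, convex and l.s.c.\ by \textnormal{H($\phi$)}. The obstruction is that the variational inequality is now posed on the \emph{unbounded} space $V$, whereas Theorem \ref{Brodersurjectivity} applies only on a bounded, closed, convex set. I would therefore solve the inequality on each closed ball $\overline{B(0,n)}$ through Theorem \ref{Brodersurjectivity}, obtaining unique $u_n$, and then establish a uniform a priori bound precisely as in \eqref{eqns3.07}--\eqref{eqns3.010}, combining the coercivity of the double phase modular with the growth control \textnormal{H($f$)(ii)}, the affine lower bound \eqref{eqns3.08} for $\varphi$, Hölder's inequality on the $\eta,\xi$ terms, \eqref{lamrho}, Young's inequality and Proposition \ref{inesp}. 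Such a bound yields an index $N_0$ with $\|u_{N_0}\|_V<N_0$, i.e.\ an interior solution, and the standard perturbation $v_t=u_{N_0}+(1-t)(z-u_{N_0})$ with $z\in V$ arbitrary and $t\uparrow 1$ upgrades the ball-restricted inequality to one valid for all $v\in V$, which is exactly a weak solution of \eqref{eqns3.011}.

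I expect the a priori estimate to be the only delicate point: controlling $\int_\Omega f(x,u_n,\nabla u_n)u_n\,\diff x$ against the $p$-homogeneous part of the double phase energy requires the growth exponents $\theta_1,\theta_2$ of \textnormal{H($f$)(ii)} to interact favorably with the modular, which in the borderline cases $\theta_1=p$ or $\theta_2=p$ is governed by the smallness conditions $c_f<1$, $d_f\lambda<1$ carried by the second inequality of \textnormal{H($0$)}. Apart from this bookkeeping, the corollary is a direct transcription of Theorem \ref{Theorems1} under the trivial constraint $K\equiv V$.
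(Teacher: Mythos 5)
Your proposal is correct and takes essentially the same route as the paper: the paper gives no separate proof, treating the corollary as the specialization of Theorem~\ref{Theorems1} to the unconstrained case $K(w)\equiv V$, which is precisely what your verbatim re-run of that proof establishes (uniqueness via \eqref{lemmaineqp} and \textnormal{H($f$)(iii)}, existence via Theorem~\ref{Brodersurjectivity} on the balls $\overline{B(0,n)}$ together with the a priori bound and the convex-combination argument). Your only unnecessary detour is the claim that Theorem~\ref{Theorems1} cannot be invoked literally: that theorem never assumes \textnormal{H($J$)}, so one may simply take $L\equiv 0$ and, say, $J\equiv 1$ (both admissible for \textnormal{H($L$)} and \textnormal{H($J$)}), for which $K(w)=V$ for every $w\in V$ and problem \eqref{eqn3.1} becomes exactly problem \eqref{eqns3.011}, making the corollary a literal instance of Theorem~\ref{Theorems1}.
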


We define $\gamma \colon V\to X$ by $\gamma u=(\gamma_1u,\gamma_2u)$ for all $u\in V$, where $\gamma_1\colon V\to L^p(\Omega)$ is the embedding operator of $V$ to $L^p(\Omega)$ and $\gamma_2\colon V\to L^p(\Gamma_2)$ is the trace operator. From Proposition \ref{proposition_embeddings} we know that $\gamma$ is a linear, bounded and compact operator. Next we introduce the multivalued mapping $U\colon X\to 2^{X^*}$ defined by
\begin{align}\label{defU}
	U(u,v):=\big\{(\eta,\xi)\in X^*\,:\, \eta(x)\in U_1(x,u(x)) \text{ a.\,a.\,in }\Omega,\ \xi(x)\in U_2(x,v(x))\text{ a.\,a.\,on }\Gamma_2\big\}	
\end{align}
for all $(u,v)\in X$.

\begin{lemma}\label{lemma2}
	Assume that \textnormal{H($U_1$)} and \textnormal{H($U_2$)} are satisfied. Then, the following hold:
	\begin{enumerate}
		\item[{\rm(i)}]
		$U$ is well-defined and for each $(u,v)\in X$ the set $U(u,v)$ is bounded, closed and convex in $X^*$;
		\item[{\rm(ii)}]
		$U$ is strongly-weakly u.s.c.
	\end{enumerate}
\end{lemma}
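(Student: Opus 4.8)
The plan is to prove the two assertions in turn. For part (i), I would first verify that $U(u,v)$ is nonempty for each $(u,v)\in X$. Fix $(u,v)\in X$; then $u\in L^p(\Omega)$ and $v\in L^p(\Gamma_2)$. Using hypotheses \textnormal{H($U_1$)(i)--(ii)}, the multimap $x\mapsto U_1(x,u(x))$ is measurable on $\Omega$ with nonempty, closed values in $\R$, so by a standard measurable selection theorem (e.g.\ Kuratowski--Ryll-Nardzewski) there exists a measurable selection $\eta(x)\in U_1(x,u(x))$; the same argument applied on $\Gamma_2$ using \textnormal{H($U_2$)(i)--(ii)} produces a measurable selection $\xi(x)\in U_2(x,v(x))$. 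The growth bound \textnormal{H($U_1$)(iv)} gives $|\eta(x)|\le\alpha_{U_1}(x)+\beta_{U_1}|u(x)|^{\theta_3-1}$, and since $\alpha_{U_1}\in L^{p'}(\Omega)$ and $\theta_3-1\le p-1$ so that $|u|^{\theta_3-1}\in L^{p'}(\Omega)$, we conclude $\eta\in L^{p'}(\Omega)$; likewise $\xi\in L^{p'}(\Gamma_2)$ via \textnormal{H($U_2$)(iv)}. Hence $(\eta,\xi)\in X^*$ and $U(u,v)\ne\emptyset$, so $U$ is well-defined. Boundedness of $U(u,v)$ in $X^*$ follows from the same growth estimates (the bound depends only on $\|u\|_{p,\Omega}$ and $\|v\|_{p,\Gamma_2}$). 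Convexity is immediate because $U_1(x,u(x))$ and $U_2(x,v(x))$ are convex in $\R$ pointwise, so any convex combination of selections is again a selection. For closedness, I would take $(\eta_k,\xi_k)\to(\eta,\xi)$ in $X^*$ with $(\eta_k,\xi_k)\in U(u,v)$, pass to a subsequence converging a.e., and use that $U_1(x,\cdot)$, $U_2(x,\cdot)$ have closed values to conclude $\eta(x)\in U_1(x,u(x))$ a.e.\ and $\xi(x)\in U_2(x,v(x))$ a.e.

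For part (ii), by Proposition \ref{uscproposition} it suffices to show that for every weakly closed set $C\subset X^*$ the set $U^{-}(C)=\{(u,v)\in X:U(u,v)\cap C\ne\emptyset\}$ is strongly closed in $X$. Equivalently, and more directly given the structure here, I would use the sequential characterization: let $(u_k,v_k)\to(u,v)$ strongly in $X$ and let $(\eta_k,\xi_k)\in U(u_k,v_k)$ with $(\eta_k,\xi_k)\weak(\eta,\xi)$ weakly in $X^*$; the goal is to show $(\eta,\xi)\in U(u,v)$. Passing to a subsequence, $u_k\to u$ a.e.\ in $\Omega$ and $v_k\to v$ a.e.\ on $\Gamma_2$, while the growth bounds \textnormal{H($U_1$)(iv)} and \textnormal{H($U_2$)(iv)} keep $(\eta_k,\xi_k)$ bounded in $X^*$, consistent with the weak limit.

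The main obstacle is upgrading the a.e.\ convergence of $(u_k,v_k)$ together with the weak convergence of $(\eta_k,\xi_k)$ into the pointwise inclusion $\eta(x)\in U_1(x,u(x))$. The key step is a convergence theorem of Aubin--Cellina type (or equivalently an application of Mazur's lemma): since $\eta_k\weak\eta$ in $L^{p'}(\Omega)$, convex combinations of $\{\eta_j\}_{j\ge k}$ converge strongly to $\eta$, hence a further subsequence of such combinations converges a.e.\ to $\eta(x)$. Combining this with the a.e.\ convergence $u_k(x)\to u(x)$ and the upper semicontinuity of $s\mapsto U_1(x,s)$ from \textnormal{H($U_1$)(iii)} (which, for closed convex values in $\R$, forces the graph to be closed in the relevant sense) yields $\eta(x)\in U_1(x,u(x))$ for a.a.\ $x\in\Omega$. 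The identical argument on $\Gamma_2$, using \textnormal{H($U_2$)(iii)}, gives $\xi(x)\in U_2(x,v(x))$ a.e.\ on $\Gamma_2$. Therefore $(\eta,\xi)\in U(u,v)$, establishing that $U$ is strongly--weakly u.s.c.\ with weakly compact values (the values are bounded, closed, convex in the reflexive space $X^*$, hence weakly compact), which in a reflexive setting is exactly the required closed-graph property.
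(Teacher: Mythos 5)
Your proposal follows essentially the same route as the paper's proof: part (i) by producing a measurable selection and using the growth conditions H($U_1$)(iv), H($U_2$)(iv) to place it in $X^*$, with convexity and closedness checked pointwise from the values of $U_1$, $U_2$; part (ii) by Proposition \ref{uscproposition} applied to weakly closed subsets of $X^*$, boundedness of the selections, reflexivity, and Mazur's lemma combined with the upper semicontinuity of $U_1(x,\cdot)$, $U_2(x,\cdot)$ to pass the inclusion to the limit. The structure and all the key tools match.

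One step is misattributed, and as you state it the inference would fail: hypotheses H($U_1$)(i)--(ii) alone (closed values and measurability of $x\mapsto U_1(x,s)$ for each \emph{fixed} $s$) do not imply that the superposition $x\mapsto U_1(x,u(x))$ is measurable for a measurable $u$; fixed-$s$ measurability gives no control on how the sets vary with $s$. What rescues the argument is precisely H($U_1$)(iii): measurability in $x$ together with upper semicontinuity in $s$ makes $U_1$ an upper Carath\'eodory multimap, and such maps are superpositionally measurable --- this is exactly how the paper argues, invoking Theorem 1.3.4 of Kamenskii--Obukhovskii--Zecca before applying the selection theorem (the paper uses Yankov--von Neumann--Aumann; your Kuratowski--Ryll-Nardzewski citation is equally fine once measurability of the superposition is secured). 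With that correction your part (i) is complete. In part (ii) your formulation is, if anything, slightly more careful than the paper's at the Mazur step: convex combinations of the $\eta_j$ mix values of $U_1(x,\cdot)$ at different points $u_j(x)$, so one genuinely needs the Aubin--Cellina-type argument you sketch (for every $\varepsilon>0$ the tail combinations lie in the closed convex $\varepsilon$-neighborhood of $U_1(x,u(x))$, then let $\varepsilon\downarrow 0$), rather than a literal inclusion of the combinations in $U_1(x,u_n(x))$.
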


\begin{proof}
	\textnormal{(i)}
	Note that $U_1$ and $U_2$ satisfy an upper Carath\'eodory condition, that is, $x\mapsto U_1(x,s)$ and $x\mapsto U_2(x,s)$ are measurable and $s\mapsto U_1(x,s)$ and $s\mapsto U_2(x,s)$ are u.s.c. Therefore, we can apply Theorem 1.3.4 of Kamenskii-Obukhovskii-Zecca \cite{Kamenskii-Obukhovskii-Zecca-2001} to conclude that for each $(u,v)\in X$ the functions $x\mapsto U_1(x,u(x))$ and $x\mapsto U_2(x,v(x))$ are both measurable in $\Omega$ and on $\Gamma_2$, respectively. Additionally, from the Yankov-von Neumann-Aumann selection theorem (see e.\,g.\,Papageorgiou-Winkert \cite[Theorem 2.7.25]{Papageorgiou-Winkert-2018}), we are able to find measurable selections $\eta\colon \Omega \to \R$ and $\xi\colon \Gamma_2\to \R$ such that $\eta(x)\in U_1(x,u(x))$ for a.\,a.\,$x\in \Omega$ and $\xi(x)\in U_2(x,v(x))$ for a.\,a.\,$x\in \Gamma_2$. From hypotheses \textnormal{H($U_1$)}(iv) and \textnormal{H($U_2$)}(iv) we have that
	\begin{align}\label{eqns3.012}
		\begin{split}
			\|\eta\|_{p',\Omega}^{p'}
			&=\int_\Omega|\eta(x)|^{p'}\,\diff x\le \int_\Omega\left(\alpha_{U_1}(x)+\beta_{U_1}|u(x)|^{\theta_3-1}\right)^{p'}\,\diff x\\[2ex]
			&\le
			\begin{cases}
				\displaystyle\int_\Omega\left(\alpha_{U_1}(x)+\beta_{U_1}|u(x)|^{p-1}\right)^{p'}\,\diff x&\text{ if $\theta_3=p$},\\[3ex]
				\displaystyle\int_\Omega\left(\alpha_{U_1}(x)+c_4+|u(x)|^{p-1}\right)^{p'}\,\diff x&\text{ if $\theta_3<p$},
			\end{cases}\\[2ex]
			&\le
			\begin{cases}
				\displaystyle\int_\Omega c_5\left(|\alpha_{U_1}(x)|^{p'}+\beta_{U_1}|u(x)|^{p}\right)\,\diff x&\text{ if $\theta_3=p$},\\[3ex]
				\displaystyle\int_\Omega c_5\left(|\alpha_{U_1}(x)|^{p'}+1+|u(x)|^{p}\right)\,\diff x&\text{ if $\theta_3<p$},
			\end{cases}\\[2ex]
			&<+\infty,
		\end{split}
	\end{align}
	for some $c_4,c_5>0$, and
	\begin{align}\label{eqns3.013}
		\begin{split}
			\|\xi\|_{p',\Gamma_2}^{p'}
			&=\int_{\Gamma_2}|\xi(x)|^{p'}\,\diff\Gamma\le \int_{\Gamma_2}\left(\alpha_{U_2}(x)+\beta_{U_2}|v(x)|^{\theta_4-1}\right)^{p'}\,\diff\Gamma\\[2ex]
			&\le
			\begin{cases}
				\displaystyle\int_{\Gamma_2}\left(\alpha_{U_2}(x)+\beta_{U_2}|v(x)|^{p-1}\right)^{p'}\,\diff\Gamma&\text{ if $\theta_4=p$},\\[3ex]
				\displaystyle\int_{\Gamma_2}\left(\alpha_{U_2}(x)+c_6+|v(x)|^{p-1}\right)^{p'}\,\diff\Gamma&\text{ if $\theta_4<p$},
			\end{cases}\\[2ex]
			&\le
			\begin{cases}
				\displaystyle\int_{\Gamma_2} c_7\left(|\alpha_{U_2}(x)|^{p'}+\beta_{U_2}|v(x)|^{p}\right)\,\diff\Gamma&\text{ if $\theta_4=p$},\\[3ex]
				\displaystyle\int_{\Gamma_2} c_7\left(|\alpha_{U_2}(x)|^{p'}+1+|v(x)|^{p}\right)\,\diff\Gamma&\text{ if $\theta_4<p$},
			\end{cases}\\[2ex]
			&<+\infty,
		\end{split}
	\end{align}
	for some $c_6,c_7>0$, where we have used Young inequality for the cases $\theta_3<p$ and $\theta_4<p$. This means that $U$ is well-defined and for each $(u,v)\in X$ the set $U(u,v)$ is bounded in $X^*$. Moreover, the closedness and convexity of $U_1$ and $U_2$ guarantee that the set $U(u,v)$ is closed and convex in $X^*$ for all $(u,v)\in X$.
	
	\textnormal{(ii)}
	From Proposition \ref{uscproposition}, it is sufficient to prove that for each weakly closed set $D$ of $X^*$, the set $U^-(D)$ is closed in $X$. Let $\{(u_n,v_n)\}_{n\in\N}\subset U^-(D)$ be such that $(u_n,v_n)\to (u,v)$ in $X$ as $n\to\infty$ for some $(u,v)\in X$. Without any loss of generality, we may assume that
	\begin{align}\label{eqns3.014}
		\begin{split}
			u_n(x)&\to u(x)\quad\text{as }n\to\infty \quad \text{for a.\,a.\,}x\in \Omega, \\
			v_n(x)&\to v(x)\quad\text{as }n\to\infty \quad \text{for a.\,a.\,}x\in \Gamma_2.
		\end{split}
	\end{align}
	Let $\{(\eta_n,\xi_n)\}_{n\in\N}\subset X^*$ be a sequence such that $(\eta_n,\xi_n)\in U(u_n,v_n)$ for each $n\in \mathbb N$. From \eqref{eqns3.012} and \eqref{eqns3.013}, we can see that $\{(\eta_n,\xi_n)\}_{n\in\N}$ is bounded in $X^*$. The reflexivity of $X$ permits us to assume that, passing to a subsequence if necessary, $(\eta_n,\xi_n)\wto (\eta,\xi)$ in $X^*$ as $n\to \infty$ for some $(\eta,\xi)\in D$, owing to the weak closedness of $D$. Our objective is to prove that $(\eta,\xi)\in U(u,v)$, that is, $\eta(x)\in U_1(x,u(x))$ for a.\,a.\,$x\in \Omega$ and $\xi(x)\in U_2(x,v(x))$ for a.\,a.\,$x\in \Gamma_2$. Subsequently, we only show that $\eta(x)\in U_1(x,u(x))$ for a.\,a.\,$x\in \Omega$, the other proof works similarly.
	
	From Mazur's theorem we know that there exists a sequence $\{\zeta_n\}_{n\in\N}$ of convex combinations of $\{\eta_n\}_{n\in\N}$ such that
	\begin{align}\label{eqns3.015}
		\zeta_n\to \eta\quad \text{in }L^{q'}(\Omega)\quad \text{and}\quad \zeta_n(x)\to u(x)\quad \text{for a.\,a.\,}x\in \Omega\quad \text{as}\quad n\to\infty.
	\end{align}
	The convexity of $U_1$ reveals that $\zeta_n(x)\in U_1(x,u_n(x))$ for a.\,a.\,$x\in \Omega$. Applying the convergences in \eqref{eqns3.014} and \eqref{eqns3.015} along with the upper semicontinuity of $U_1$ (see hypothesis \textnormal{H($U_1$)(iii)}), we get that $\eta(x)\in U_1(x,u(x))$ for a.\,a.\,$x\in \Omega$. Likewise, it is true that $\xi(x)\in U_2(x,v(x))$ for a.\,a.\,$x\in \Gamma_2$. This means that $(\eta,\xi)\in U(u,v)\cap D$, thus, $(u,v)\in U^-(D)$. Therefore, we can apply Proposition~\ref{uscproposition} to conclude that $U$ is strongly-weakly u.s.c.
\end{proof}

By virtue of Theorem~\ref{Theorems1}, we are now in a position to consider  the solution mapping  $S\colon V\times X^*\to V$ of problem \eqref{eqn3.1} defined by
\begin{align*}
	S(w,(\eta,\xi)):=u(w,(\eta,\xi))\quad \text{for all }(w,(\eta,\xi))\in V\times X^*,
\end{align*}
where $u(w,(\eta,\xi))\in V$ is the unique solution of problem \eqref{eqn3.1} corresponding to $(w,(\eta,\xi))\in V\times X^*$.

The following lemma indicates that the solution map $S$ is completely continuous, that is, for any sequence $\{(w_n,(\eta_n,\xi_n))\}_{n\in\N}\subset V\times X^*$ and $(u,(\eta,\xi))\in V\times X^*$ with $(w_n,(\eta_n,\xi_n))\wto (w,(\eta,\xi))$ in $V\times X^*$ as $n\to\infty$ it holds $S(w_n,(\eta_n,\xi_n))\wto S(w,(\eta,\xi))$ in $V$ as $n\to\infty$.

\begin{lemma}\label{lemma3}
	Let $p\ge 2$. Assume that \eqref{conditions-p-q-mu}, \textnormal{H($f$)}, \textnormal{H($\phi$)},  \textnormal{H($L$)}, \textnormal{H($J$)} and \textnormal{H($0$)} are fulfilled. Then, the solution map $S\colon V\times X^*\to V$ of problem \eqref{eqn3.1} is completely continuous.
\end{lemma}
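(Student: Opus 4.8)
The plan is to fix a sequence $(w_n,(\eta_n,\xi_n))\rightharpoonup(w,(\eta,\xi))$ in $V\times X^*$, write $u_n:=S(w_n,(\eta_n,\xi_n))$ for the unique solution of \eqref{eqn3.1} furnished by Theorem \ref{Theorems1}, and show that every weakly convergent subsequence of $\{u_n\}$ converges (in fact strongly) to $S(w,(\eta,\xi))$; a subsequence argument then yields the assertion for the whole sequence. \textbf{A priori bound.} Since $0\in K(w_n)$ by Lemma \ref{lemma1}(i), I would test the variational inequality defining $u_n$ with $v=0$. Using $\langle Au_n,u_n\rangle=\rho_{\mathcal H}(\nabla u_n)+\rho_{\mathcal H}(u_n)$, the growth estimate \textnormal{H($f$)(ii)} for the convection term, the Poincar\'e and trace inequalities \eqref{lamrho}, the lower bound $\varphi(v)\ge-\alpha_\varphi\|v\|_V-\beta_\varphi$ for the proper convex l.s.c.\ functional $\varphi$, and the boundedness of $\{(\eta_n,\xi_n)\}$ in $X^*$ (being weakly convergent), the smallness conditions in \textnormal{H($0$)} (namely $c_f\delta(\theta_1)<1$ and $d_f\lambda\delta(\theta_2)<1$) let me absorb the top-order terms into the modular and conclude, via Proposition \ref{inesp}, that $\{u_n\}$ is bounded in $V$. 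Passing to a subsequence I get $u_n\rightharpoonup\hat u$ in $V$, and by the compact embeddings of Proposition \ref{proposition_embeddings} also $u_n\to\hat u$ in $L^{q_1}(\Omega)$ and in $L^p(\Gamma_2)$.

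\textbf{Strong convergence via $(\Ss_+)$.} Exactly as in Lemma \ref{lemma1}(ii), $L(\hat u)\le\limsup_n L(u_n)\le\limsup_n J(w_n)=J(w)$ by \textnormal{H($L$)} and \textnormal{H($J$)}, so $\hat u\in K(w)$. To upgrade to strong convergence I would invoke the $(\Ss_+)$ property of $A$ (Proposition \ref{prop1}): using Lemma \ref{lemma1}(iii) choose $z_n\in K(w_n)$ with $z_n\to\hat u$ in $V$, test the inequality for $u_n$ with $v=z_n$, and rearrange to bound $\langle Au_n,u_n-z_n\rangle$ from above. The duality terms $\int_\Omega\eta_n(z_n-u_n)\,\diff x$ and $\int_{\Gamma_2}\xi_n(z_n-u_n)\,\diff\Gamma$ tend to $0$ because $z_n-u_n\to0$ strongly in $L^p(\Omega)$ and $L^p(\Gamma_2)$ while $(\eta_n,\xi_n)$ stays bounded; the convection term vanishes likewise since \textnormal{H($f$)(i)} bounds $f(\cdot,u_n,\nabla u_n)$ in $L^{q_1'}(\Omega)$ and $z_n-u_n\to0$ in $L^{q_1}(\Omega)$. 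The boundary term $\varphi(z_n)-\varphi(u_n)$ is controlled by combining the weak lower semicontinuity of $\varphi$ (giving $\liminf_n\varphi(u_n)\ge\varphi(\hat u)$) with the estimate $\limsup_n\varphi(z_n)\le\varphi(\hat u)$, which follows from the convexity of $\phi$ applied to the explicit scaling form $z_n=(J(w_n)/J(w))\hat u$ of the recovery sequence. Since $\langle Au_n,z_n-\hat u\rangle\to0$ (as $Au_n$ is bounded and $z_n-\hat u\to0$), this gives $\limsup_n\langle Au_n,u_n-\hat u\rangle\le0$, and the $(\Ss_+)$ property yields $u_n\to\hat u$ strongly in $V$.

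\textbf{Passage to the limit and conclusion.} With strong convergence in hand, $Au_n\to A\hat u$ in $V^*$ by continuity of $A$, and $f(\cdot,u_n,\nabla u_n)\to f(\cdot,\hat u,\nabla\hat u)$ in $L^{q_1'}(\Omega)$ by continuity of the Nemytskij operator under \textnormal{H($f$)(i)}. For arbitrary $v\in K(w)$ I use Lemma \ref{lemma1}(iii) to pick $v_n\in K(w_n)$ with $v_n\to v$, test the inequality for $u_n$ with $v_n$, and pass to the limit term by term: the $A$-term converges to $\langle A\hat u,v-\hat u\rangle$, the pairings with $\eta_n,\xi_n$ converge by weak--strong products, the convection term converges, and the boundary term is again handled by $\liminf_n\varphi(u_n)\ge\varphi(\hat u)$ together with $\limsup_n\varphi(v_n)\le\varphi(v)$. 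This shows $\hat u\in K(w)$ satisfies the weak formulation of \eqref{eqn3.1} with data $(w,(\eta,\xi))$, so by the uniqueness part of Theorem \ref{Theorems1} we have $\hat u=S(w,(\eta,\xi))$. Since every subsequence of the bounded sequence $\{u_n\}$ admits a further subsequence converging strongly to the same limit $S(w,(\eta,\xi))$, the whole sequence converges to it strongly, hence in particular weakly, which is the claimed complete continuity.

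\textbf{Main obstacle.} The principal difficulty is that the constraint sets move with the data, $K(w_n)\neq K(w)$, so neither $u_n$ nor $\hat u$ can be tested against a fixed set; this forces the use of the recovery-sequence construction of Lemma \ref{lemma1}(iii) both in the $(\Ss_+)$ step and in the final limit passage. Coupled with this is the careful treatment of the nonsmooth boundary functional $\varphi$, for which---no growth on $\phi$ being assumed---one must extract the matching one-sided bounds $\liminf_n\varphi(u_n)\ge\varphi(\hat u)$ and $\limsup_n\varphi(v_n)\le\varphi(v)$ from lower semicontinuity and from the convexity of $\phi$ along the scaled recovery sequences.
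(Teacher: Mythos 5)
Your proposal is correct and follows essentially the same route as the paper's proof: an a priori bound obtained by testing with $v=0$ (using $0\in K(w_n)$, H($f$)(ii), H($\phi$), H($0$) and Proposition \ref{inesp}), identification of the weak limit as an element of $K(w)$, strong convergence via the recovery sequence of Lemma \ref{lemma1}(iii) combined with the $(\Ss_+)$ property of $A$, and a final limit passage with a second recovery sequence, uniqueness from Theorem \ref{Theorems1}, and a subsequence argument. The only cosmetic differences are that the paper runs the boundedness step by contradiction rather than directly, and that it merely asserts the one-sided estimate $\limsup_{n}\varphi(y_n)-\liminf_{n}\varphi(u_n)\le 0$, which you justify slightly more explicitly via convexity of $\phi$ along the scaled recovery sequence.
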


\begin{proof}
	Let $\{(w_n,(\eta_n,\xi_n))\}_{n\in\N}\subset V\times X^*$, $\{u_n\}_{n\in\N}\subset V$  be sequences and $(w,(\eta,$ $\xi))\in V\times X^*$ such that
	\begin{align*}
		(w_n,(\eta_n,\xi_n))\wto (w,(\eta,\xi))\text{ in $V\times X^*$ as $n\to\infty$}
	\end{align*}
	and  $u_n=S(w_n,(\eta_n,\xi_n))$ for each $n\in \mathbb N$. Then, for each $n\in\mathbb N$, we have $u_n\in K(w_n)$ and
	\begin{equation}\label{eqn3.9}
		\begin{split}
			&\int_\Omega\left(|\nabla u_n|^{p-2}\nabla u_n+\mu(x)|\nabla u_n|^{q-2}\nabla u_n\r)\cdot \nabla(v-u_n)\,\diff x\\
			&\quad+\int_\Omega\left(|u_n|^{p-2}u_n+\mu(x)|u_n|^{q-2}u_n\right)(v-u_n)\,\diff x\\
			&\quad +\int_{\Gamma_3}\phi(x,v)\,\diff\Gamma-\int_{\Gamma_3}\phi(x,u_n)\,\diff\Gamma\\
			&\ge \int_\Omega \eta_n(x)(v-u_n)\,\diff x
			+\int_{\Gamma_2} \xi_n(x)(v-u_n)\,\diff\Gamma\\
			&\quad+\int_\Omega f(x,u_n,\nabla u_n)(v-u_n)\,\diff x \quad \text{for all }v\in K(w_n).
		\end{split}
	\end{equation}
	
	{\bf Claim:} $\{u_n\}_{n\in\N}$ is bounded in $V$.
	
	Arguing by contradiction, without any loss of generality, we may assume that $\|u_n\|_V\to +\infty$ as $n\to\infty$. Lemma~\ref{lemma1}(i) indicates that $0\in K(w_n)$ for each $n\in \mathbb N$.  Taking $v=0$ in \eqref{eqn3.9} yields
	\begin{align*}
		&\int_\Omega|\nabla u_n|^{p}+\mu(x)|\nabla u_n|^{q}+|u_n|^p+\mu(x)|u_n|^q\,\diff x+\int_{\Gamma_3}\phi(x,u_n(x))\,\diff\Gamma\\
		&\le \int_\Omega \eta_n(x)u_n(x)\,\diff x+\int_{\Gamma_2} \xi_n(x)u_n(x)\,\diff\Gamma+\int_\Omega f(x,u_n,\nabla u_n)u_n(x)\,\diff x\\
		&\quad
		+\int_{\Gamma_3}\phi(x,0)\,\diff\Gamma\\
		&\le  \|\phi(\cdot,0)\|_{1,\Gamma_3} +\int_\Omega c_f|\nabla u_n(x)|^{\theta_1}+d_f|u_n(x)|^{\theta_2}+\beta_f(x)\,\diff x+\|\eta_n\|_{p',\Omega}\|u_n\|_{p,\Omega}\\
		&\quad +\|\xi_n\|_{p',\Gamma_2}\|u_n\|_{p,\Gamma_2},
	\end{align*}
	where we have used H\"older's inequality and hypotheses \textnormal{H($f$)(ii)} and \textnormal{H($\phi$)(i)}. Using   \eqref{lamrho}, \eqref{conditions-p-q-mu}, \eqref{eqns3.08}, \eqref{eqns3.09} and inequality $\max\{c_f\delta(\theta_1),d_f \delta(\theta_2)\}<1$ we obtain
	\begin{align*}
		\int_\Omega|\nabla u_n|^{p}+\mu(x)|\nabla u_n|^{q}+|u_n|^p+\mu(x)|u_n|^q\,\diff x\le c_8
	\end{align*}
	for some $c_8>0$ which is independent of $n$ thanks to the boundedness of $\{(\eta_n,\xi_n)\}_{n\in\N}$ in $X^*$. Combining the inequality above and Proposition~\ref{inesp}, we get a contradiction. So, the Claim is proved.
	
	From the Claim we may assume, passing to a subsequence if necessary, that
	\begin{align*}
		u_n \wto u \quad \text{in }V\quad  \text{as}\quad n\to\infty
	\end{align*}
	for some $u\in V$.
	
	We assert that $u=S(w,(\eta,\xi))$, i.e., $u$ is the unique solution of problem \eqref{eqn3.1} corresponding to $(w,(\eta,\xi))\in V\times X^*$. Lemma~\ref{lemma1}(ii) indicates that $\graph(K)$ is sequentially closed in $V_w\times V_w$. Keeping in mind $\{(u_n,w_n)\}_{n\in\N}\subset \graph(K)$ and $(u_n,w_n)\wto (u,w)$ in $V\times V$ as $n\to\infty$, we have $u\in K(w)$. On the other hand, it follows from Lemma~\ref{lemma1}(iii) that there exists a sequence $\{y_n\}_{n\in\N}\subset V$ satisfying $y_n\in K(w_n)$ and
	\begin{align*}
		y_n\to u\quad \text{in }V \quad \text{as}\quad n\to\infty.
	\end{align*}
	Inserting $v=y_n$ in \eqref{eqn3.9} and passing to the upper limit as $n\to\infty$ to the resulting inequality, we have
	\begin{align*}
		&\limsup_{n\to\infty}\int_\Omega\left(|\nabla u_n|^{p-2}\nabla u_n+\mu(x)|\nabla u_n|^{q-2}\nabla u_n\r) \cdot \nabla(u_n-y_n)\,\diff x\\
		&\quad +\limsup_{n\to\infty}\int_\Omega\left(|u_n|^{p-2}u_n+\mu(x)|u_n|^{q-2}u_n\right)(u_n-y_n)\,\diff x\\
		&\le \limsup_{n\to\infty}\int_\Omega f(x,u_n,\nabla u_n)(u_n-y_n)\,\diff x+
		\limsup_{n\to\infty}\int_{\Gamma_3}\phi(x,y_n)\,\diff\Gamma  \\
		& \quad -\liminf_{n\to\infty}\int_{\Gamma_3}\phi(x,u_n)\,\diff\Gamma +\limsup_{n\to\infty}\int_{\Gamma_2} \xi_n(x)(y_n-u_n)\,\diff\Gamma\\
		&\quad		+ \limsup_{n\to\infty}\int_\Omega \eta_n(x)(u_n-y_n)\,\diff x.
	\end{align*}
	Because the embeddings of $V$ into $L^{q_1}(\Omega)$, $V$ into $L^{p}(\Omega)$ and $V$ into $L^{p}(\Gamma_2)$ are all compact (due to $q_1<p^*$, see Proposition \ref{proposition_embeddings}), the sequences $\{(\eta_n,\xi_n)\}_{n\in\N}$ and $\{f(\cdot,u_n,\nabla u_n)\}_{n\in\N} $ are bounded in $X^*$ and $L^{q_1'}(\Omega)$ (see hypothesis \textnormal{H($f$)}(i)), respectively. We obtain
	\begin{align*}
		& \limsup_{n\to\infty}\int_\Omega \eta_n(x)(u_n-y_n)\,\diff x+\limsup_{n\to\infty}\int_{\Gamma_2} \xi_n(x)(u_n-y_n)\,\diff\Gamma\\
		&+\limsup_{n\to\infty}\int_\Omega f(x,u_n,\nabla u_n)(u_n-y_n)\,\diff x=0.
	\end{align*}
	From the convexity and continuity of $\phi$ (hence, $V\ni u\mapsto \int_{\Gamma_3}\phi(x,u(x))\,\diff\Gamma$ is weakly l.s.c.), it is not difficult to see that
	\begin{align*}
		\limsup_{n\to\infty}\int_{\Gamma_3}\phi(x,y_n(x))\,\diff\Gamma
		-\liminf_{n\to\infty}\int_{\Gamma_3}\phi(x,u_n(x))\,\diff\Gamma\le 0.
	\end{align*}
	Taking into account the last three inequalities, we have
	\begin{align*}
		&\limsup_{n\to\infty}\int_\Omega \left (|\nabla u_n|^{p-2} \nabla u_n  +\mu(x)|\nabla u_n|^{q-2} \nabla u_n\r)\cdot \nabla (u_n-u) \,\diff x\\
		&+\limsup_{n\to\infty}\int_\Omega\left(|u_n|^{p-2}u_n+\mu(x)|u_n|^{q-2}u_n\right)(u_n-u)\,\diff x \\
		&\le \limsup_{n\to\infty}\int_\Omega \left (|\nabla u_n|^{p-2} \nabla u_n  +\mu(x)|\nabla u_n|^{q-2} \nabla u_n\r)\cdot \nabla (u_n-u)\,\diff x\\
		& +\liminf_{n\to\infty}\int_\Omega \left (|\nabla u_n|^{p-2} \nabla u_n  +\mu(x)|\nabla u_n|^{q-2} \nabla u_n\r)\cdot \nabla (u-y_n) \,\diff x\\
		&+\limsup_{n\to\infty}\int_\Omega\left(|u_n|^{p-2}u_n+\mu(x)|u_n|^{q-2}u_n\right)(u_n-u)\,\diff x \\
		&+\liminf_{n\to\infty}\int_\Omega\left(|u_n|^{p-2}u_n+\mu(x)|u_n|^{q-2}u_n\right)(u-y_n)\,\diff x \\
		&\le \limsup_{n\to\infty}\int_\Omega \left (|\nabla u_n|^{p-2} \nabla u_n  +\mu(x)|\nabla u_n|^{q-2} \nabla u_n\r)\cdot \nabla (u_n-y_n)\,\diff x\\
		&+\limsup_{n\to\infty}\int_\Omega\left(|u_n|^{p-2}u_n+\mu(x)|u_n|^{q-2}u_n\right)(u_n-y_n)\,\diff x \\
		&\le 0.
	\end{align*}
	The latter together with the convergence $u_n\wto u$ in $V$ as $n\to\infty$,  and Proposition~\ref{prop1} (i.e., $A$ is of type $(\Ss_+)$) deduces that
	\begin{align*}
		u_n\to u\quad \text{in }V \quad \text{as} \quad n\to\infty.
	\end{align*}
	
	For any fixed $v\in K(w)$, Lemma~\ref{lemma1}(iii) permits us to find a  sequence $\{v_n\}_{n\in\N}\subset V$ such that
	\begin{align*}
		v_n\in K(w_n)\quad \text{and}\quad  v_n\to v \quad \text{in }V\quad \text{as}\quad n\to\infty.
	\end{align*}
	Taking $v=v_n$  in \eqref{eqn3.9} and passing to the upper limit as $n\to\infty$ we  obtain
	\begin{align*}
		&\int_\Omega\left(|\nabla u|^{p-2}\nabla u+\mu(x)|\nabla u|^{q-2}\nabla u\r)\cdot \nabla(v-u)\,\diff x+\int_{\Gamma_3}\phi(x,v)\,\diff\Gamma
		-\int_{\Gamma_3}\phi(x,u)\,\diff\Gamma\\
		&\quad+\int_\Omega\left(|u|^{p-2}u+\mu(x)|u|^{q-2}u\right)(v-u)\,\diff x\\
		&\ge \limsup_{n\to\infty}\int_\Omega\left(|\nabla u_n|^{p-2}\nabla u_n+\mu(x)|\nabla u_n|^{q-2}\nabla u_n\r)\cdot  \nabla(v_n-u_n) \,\diff x\\
		&\quad +\limsup_{n\to\infty}\int_{\Gamma_3}\phi(x,v_n)\,\diff\Gamma-\liminf_{n\to\infty}\int_{\Gamma_3}\phi(x,u_n)\,\diff\Gamma\\
		&\quad+\limsup_{n\to\infty}\int_\Omega\left(|u_n|^{p-2}u_n+\mu(x)|u_n|^{q-2}u_n\right)(v_n-u_n)\,\diff x\\
		&\ge \limsup_{n\to\infty}\int_\Omega \eta_n(x)(v_n-u_n)\,\diff x+\limsup_{n\to\infty}\int_{\Gamma_2} \xi_n(x)(v_n-u_n)\,\diff\Gamma\\
		&\quad +\limsup_{n\to\infty}\int_\Omega f(x,u_n,\nabla u_n)(v_n-u_n)\,\diff x\\
		&\ge \int_\Omega \eta(x)(v-u)\,\diff x+\int_{\Gamma_2} \xi(x)(v-u)\,\diff\Gamma
		+\int_\Omega f(x,u,\nabla u)(v-u)\,\diff x.
	\end{align*}
	Note that $v\in K(w)$ is arbitrary, so we conclude that $u$ is the unique solution of problem \eqref{eqn3.1} corresponding to $(w,(\eta,\xi))$, that is, $u=S(w,(\eta,\xi))$. Since every convergent subsequence of $\{u_n\}_{n\in\N}$ converges to the same limit $u$,  we know that the whole sequence $\{u_n\}_{n\in\N}$ converges strongly to $u$ in  $V$. This means that $S\colon V\times X^*\to V$ is completely continuous.
\end{proof}

The following lemma gives a priori estimates for the weak solutions of problem \eqref{eqn1}.

\begin{lemma}\label{lemmaes}
	Let $p\ge 2$. Suppose that \eqref{conditions-p-q-mu}, \textnormal{H($f$)}, \textnormal{H($\phi$)}, \textnormal{H($U_1$)}, \textnormal{H($U_2$)}, \textnormal{H($L$)}, \textnormal{H($J$)} and \textnormal{H($0$)} are satisfied. If  the solution set of problem \eqref{eqn1}, denoted by $\Upsilon$, is nonempty, then there exists a constant $M>0$ such that
	\begin{align}\label{bounedesp}
		\|u\|_{V}\le M\quad \text{for all }u\in \Upsilon.
	\end{align}
\end{lemma}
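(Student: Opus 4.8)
The plan is to derive a single energy inequality by testing the weak formulation against $v=0$, and then to absorb all top-order terms by means of \textnormal{H($0$)}, following exactly the coercivity estimate of the existence part of Theorem~\ref{Theorems1} but now carrying the extra contributions produced by the multivalued data. So let $u\in\Upsilon$ be arbitrary. By Definition~\ref{weaksol} there exist selections $\eta\in L^{p'}(\Omega)$ and $\xi\in L^{p'}(\Gamma_2)$ with $\eta(x)\in U_1(x,u(x))$ a.e.\ in $\Omega$ and $\xi(x)\in U_2(x,u(x))$ a.e.\ on $\Gamma_2$, for which the variational inequality holds for every $v\in K(u)$. Since $0\in K(u)$ by Lemma~\ref{lemma1}(i), I would insert $v=0$ and rearrange to reach
\begin{align*}
	&\int_\Omega\big(|\nabla u|^p+\mu(x)|\nabla u|^q+|u|^p+\mu(x)|u|^q\big)\,\diff x+\int_{\Gamma_3}\phi(x,u)\,\diff\Gamma\\
	&\quad\le \int_\Omega \eta u\,\diff x+\int_{\Gamma_2}\xi u\,\diff\Gamma+\int_{\Gamma_3}\phi(x,0)\,\diff\Gamma+\int_\Omega f(x,u,\nabla u)u\,\diff x,
\end{align*}
which is structurally identical to \eqref{eqns3.07}, the only difference being that $\eta$ and $\xi$ are now tied to $u$ through $U_1,U_2$ instead of being fixed data.

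The second step is to bound the right-hand side termwise, keeping in mind that every constant must be independent of the particular $u$. The convection term is controlled by \textnormal{H($f$)(ii)} and split into the four cases of \eqref{eqns3.09}; the datum $\int_{\Gamma_3}\phi(x,0)\,\diff\Gamma$ is finite by \textnormal{H($\phi$)(i)}, while the term $\int_{\Gamma_3}\phi(x,u)\,\diff\Gamma$ on the left is bounded below by $-\alpha_\varphi\|u\|_V-\beta_\varphi$ through \eqref{eqns3.08}. The genuinely new terms are $\int_\Omega \eta u\,\diff x$ and $\int_{\Gamma_2}\xi u\,\diff\Gamma$: using the pointwise growth bounds \textnormal{H($U_1$)(iv)} and \textnormal{H($U_2$)(iv)} (already quantified in \eqref{eqns3.012}--\eqref{eqns3.013}) together with H\"older's inequality, I estimate these by $\|\alpha_{U_1}\|_{p',\Omega}\|u\|_{p,\Omega}+\beta_{U_1}\|u\|_{p,\Omega}^{\theta_3}$ and $\|\alpha_{U_2}\|_{p',\Gamma_2}\|u\|_{p,\Gamma_2}+\beta_{U_2}\|u\|_{p,\Gamma_2}^{\theta_4}$, respectively.

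The third and central step is the absorption. After using \eqref{lamrho} to convert the domain and boundary $L^p$-norms into $\|\nabla u\|_{p,\Omega}^p$ where needed, and Young's inequality to push all subcritical ($\theta_i<p$) and all first-order ($\alpha$- and $\varphi$-)contributions into lower-order terms, the only top-order contributions that remain carry the coefficients compared in the second inequality of \textnormal{H($0$)}, namely $c_f$ against $1$ on the gradient part and $d_f\lambda+\beta_{U_1}+\beta_{U_2}$ against $1$ on the remaining $p$-homogeneous part, each weighted by the indicators $\delta(\theta_i)$. Since \textnormal{H($0$)} forces both of these to be strictly below $1$, a positive fraction $\kappa>0$ of the modular survives; as the right-hand side never touches the $\mu$-weighted $q$-terms, I keep the complete modular on the left and obtain
\begin{align*}
	\kappa\int_\Omega\big(|\nabla u|^p+\mu(x)|\nabla u|^q+|u|^p+\mu(x)|u|^q\big)\,\diff x\le C_1+C_2\|u\|_V^{s}
\end{align*}
for some $s<p$, where $C_1,C_2\ge 0$ and $s$ depend only on the data of \eqref{eqn1}.

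Finally, Proposition~\ref{inesp} converts this modular bound into a norm bound: if $\|u\|_V\le 1$ there is nothing to prove, and if $\|u\|_V>1$ then Proposition~\ref{inesp}(iv), applied to $u$ and to $|\nabla u|$, gives $\rho_{\mathcal H}(u)+\rho_{\mathcal H}(\nabla u)\ge 2^{1-p}\|u\|_V^p$, so the previous estimate reads $2^{1-p}\kappa\|u\|_V^p\le C_1+C_2\|u\|_V^s$ with $s<p$, which forces $\|u\|_V\le M$ for a constant $M$ independent of $u$, i.e.\ \eqref{bounedesp}. The main obstacle is precisely this absorption in the critical case $\theta_1=\theta_2=\theta_3=\theta_4=p$: unlike in Theorem~\ref{Theorems1}, where $\eta,\xi$ were fixed and produced only first-order terms, here they depend on $u$ with full $p$-growth and hence feed additional $p$-homogeneous terms into the estimate, and it is only the sharp calibration of \textnormal{H($0$)} that keeps the net coefficient on the left positive and thus yields a bound uniform over the entire solution set $\Upsilon$.
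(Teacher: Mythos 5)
Your proposal reproduces the paper's own proof of this lemma: insert $v=0\in K(u)$ (Lemma~\ref{lemma1}(i)) into the inequality of Definition~\ref{weaksol}, bound the convection term by \textnormal{H($f$)}(ii) and the two multivalued terms by \textnormal{H($U_1$)}(iv), \textnormal{H($U_2$)}(iv) together with H\"older's and Young's inequalities (these are exactly the estimates \eqref{eqns.3.018b} and \eqref{eqns3.018}), and then absorb the surviving $p$-homogeneous contributions using the second inequality of \textnormal{H($0$)} to obtain a modular bound uniform over $\Upsilon$, so the approach is the same. The only flaw is a small technical slip in the final modular-to-norm step, which the paper leaves implicit: Proposition~\ref{inesp}(iv) applies to $u$ and to $|\nabla u|$ separately only when $\|u\|_{\mathcal H}>1$ and $\|\nabla u\|_{\mathcal H}>1$ individually, which does not follow from $\|u\|_V>1$, so your claimed inequality $\rho_{\mathcal H}(u)+\rho_{\mathcal H}(\nabla u)\ge 2^{1-p}\|u\|_V^p$ can fail (take $\|u\|_{\mathcal H}=0.9$, $\|\nabla u\|_{\mathcal H}=0.3$, with both modulars near their lower bounds $\|\cdot\|_{\mathcal H}^q$ and $q$ large). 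The repair is immediate: if $\|u\|_V>2$, then $\max\{\|u\|_{\mathcal H},\|\nabla u\|_{\mathcal H}\}\ge \|u\|_V/2>1$, hence Proposition~\ref{inesp}(iv) gives $\rho_{\mathcal H}(u)+\rho_{\mathcal H}(\nabla u)\ge 2^{-p}\|u\|_V^p$, and comparing with your right-hand side $C_1+C_2\|u\|_V^{s}$, $s<p$, yields the uniform bound \eqref{bounedesp}.
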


\begin{proof}
	Let $u\in V$ be a weak solution of problem \eqref{eqn1}. Then, there exist functions $(\eta,\xi)\in X^*$ with $\eta(x)\in  U_1(x,u(x))$ for a.\,a.\,$x\in \Omega$ and $\xi(x)\in U_2(x,u(x))$ for a.\,a.\,$x\in \Gamma_2$ such that
	\begin{align*}
		&\int_\Omega\left(|\nabla u|^{p-2}\nabla u+\mu(x)|\nabla u|^{q-2}\nabla u\r)\cdot \nabla(v-u)\,\diff x
		+\int_{\Gamma_3}\phi(x,v)\,\diff\Gamma
		-\int_{\Gamma_3}\phi(x,u)\,\diff\Gamma\\
		&+\int_\Omega\left(| u|^{p-2} u+\mu(x)| u|^{q-2} u\r)(v-u)\,\diff x\\
		&\geq \int_\Omega f(x,u,\nabla u)(v-u)\,\diff x
		+\int_\Omega \eta(x)(v-u)\,\diff x
		+\int_{\Gamma_2} \xi(x)(v-u)\,\diff\Gamma
	\end{align*}
	for all $v\in K(u)$. Since $0\in K(u)$, we can take $v=0$ in the inequality above to get
	\begin{equation}\label{eqns3.016}
		\begin{split}
			&\int_\Omega\left(|\nabla u|^{p-2}\nabla u+\mu(x)|\nabla u|^{q-2}\nabla u\r)\cdot \nabla u+|u|^p+\mu(x)|u|^q\,\diff x\\
			&\le \int_{\Gamma_3}\phi(x,0)\,\diff\Gamma-\int_{\Gamma_3}\phi(x,u)\,\diff\Gamma
			+\int_\Omega f(x,u,\nabla u)u\,\diff x
			+\int_\Omega \eta(x)u\,\diff x+\int_{\Gamma_2} \xi(x)u\,\diff\Gamma.
		\end{split}
	\end{equation}
	Let $\varepsilon_3,\varepsilon_4>0$. From hypotheses \textnormal{H($U_1$)}(iv) and \textnormal{H($U_2$)}(iv) it follows that
	\begin{align}\label{eqns.3.018b}
		\begin{split}
			\int_\Omega \eta(x)u\,\diff x
			&\le \int_\Omega |\eta(x)||u|\,\diff x
			\le \int_\Omega \l(\alpha_{U_1}(x)+\beta_{U_1}|u|^{\theta_3-1}\r)|u|\,\diff x\\
			&\le
			\begin{cases}
				\displaystyle\int_\Omega \alpha_{U_1}(x)|u|
				+\beta_{U_1}|u|^{p}\,\diff x&\text{if }\theta_3=p,\\[3ex]
				\displaystyle\int_\Omega (\alpha_{U_1}(x)+c_9(\varepsilon_3))|u|+\varepsilon_3|u|^{p}\,\diff x&\text{if } \theta_3<p,
			\end{cases}\\[1ex]
			&\leq
			\begin{cases}
				\displaystyle\|\alpha_{U_1}\|_{p',\Omega}\|u\|_{p,\Omega}
				+\beta_{U_1}\|u\|_{p,\Omega}^{p}&\text{if }\theta_3=p,\\[1ex]
				\|\alpha_{U_1}(\cdot)+c_9(\varepsilon_3)\|_{p',\Omega}\|u\|_{p,\Omega}+\varepsilon_3\|u\|_{p,\Omega}^{p}&\text{if }\theta_3<p,
			\end{cases}
		\end{split}
	\end{align}
	for some $c_9(\varepsilon_3)>0$, and
	\begin{align}\label{eqns3.018}
		\begin{split}
			\int_{\Gamma_2} \xi(x)u\,\diff\Gamma
			&\le \int_{\Gamma_2} |\xi(x)||u|\,\diff\Gamma
			\le \int_{\Gamma_2} (\alpha_{U_2}(x)+\beta_{U_2}|u|^{\theta_4-1})|u|\,\diff\Gamma\\
			&\leq
			\begin{cases}
				\displaystyle\int_{\Gamma_2} \alpha_{U_2}(x)|u|+\beta_{U_2}|u|^{p}\,\diff\Gamma&\text{if } \theta_4=p,\\[3ex]
				\displaystyle\int_{\Gamma_2} (\alpha_{U_2}(x)+c_{10}(\varepsilon_4))|u|+\varepsilon_4|u|^{p}\,\diff\Gamma&\text{if }\theta_4<p,
			\end{cases}\\[1ex]
			&\leq
			\begin{cases}
				\displaystyle\|\alpha_{U_2}\|_{p',\Gamma_2}\|u\|_{p,\Gamma_2}+\beta_{U_2}\|u\|_{p,\Gamma_2}^{p}&\text{if }\theta_4=p,\\
				\|\alpha_{U_2}(\cdot)+c_{10}(\varepsilon_3)\|_{p',\Gamma_2}\|u\|_{p,\Gamma_2}+\varepsilon_4\|u\|_{p,\Gamma_2}^{p}&\text{if }\theta_4<p,
			\end{cases}
		\end{split}
	\end{align}
	for some $c_{10}(\varepsilon_4)>0$, where we have used Young's inequality (for the cases $\theta_3<p$ and $\theta_4<p$) and H\"older's inequality. Taking \eqref{conditions-p-q-mu}, \eqref{eqns3.09}, \eqref{eqns3.010} and \eqref{eqns3.016}, \eqref{eqns.3.018b}, \eqref{eqns3.018} into account, we take $\varepsilon_3$ (for the case $\theta_3<p$) and $\varepsilon_4$ (for the case $\theta_4<p$) small enough and use the inequality $\max\{c_f\delta(\theta_1),d_f \delta(\theta_2)+\beta_{U_1} \delta(\theta_3) +\beta_{U_2} \delta(\theta_4)\}<1$ to infer that there exists a constant $c_{11}>0$ which is independent of $u$ such that
	\begin{align*}
		\int_\Omega\left(|\nabla u|^{p-2}\nabla u+\mu(x)|\nabla u|^{q-2}\nabla u\r)\cdot \nabla u+|u|^p+\mu(x)|u|^q\,\diff x\le c_{11}.
	\end{align*}
	Therefore, it follows that the solution set of problem \eqref{eqn1} is bounded, namely, there exists a constant $M>0$ such that \eqref{bounedesp} is satisfied. This completes the proof of the lemma.
\end{proof}

Now we are in the position to state and prove our main result in this paper concerning the nonemptiness and weak compactness of the solution set $\Upsilon$ to problem \eqref{eqn1}.

\begin{theorem}\label{maintheorem}
	Let $p\ge 2$. Under the assumptions of Lemma~\ref{lemma3},  the solution set $\Upsilon$ of problem \eqref{eqn1} is nonempty and weakly compact in $V$.
\end{theorem}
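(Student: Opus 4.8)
The plan is to reformulate \eqref{eqn1} as a fixed point problem to which the Kakutani--Ky Fan theorem (Theorem \ref{fpt}) applies. The single-valued ingredient is the solution map $S$ of the auxiliary problem \eqref{eqn3.1} (well defined by Theorem \ref{Theorems1} and completely continuous by Lemma \ref{lemma3}); the multivalued ingredient is $U$ from \eqref{defU}, which has closed convex values and is strongly--weakly u.s.c.\ by Lemma \ref{lemma2}. A weak solution of \eqref{eqn1} is exactly a $u\in V$ admitting a selection $(\eta,\xi)\in U(\gamma u)$ with $u=S(u,(\eta,\xi))$, so that both the obstacle coupling (through $K(u)$) and the multivalued coupling (through $U(\gamma u)$) close up simultaneously.

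The conceptual obstacle is that, since $S(w,\cdot)$ is nonlinear, the set $\{S(w,(\eta,\xi)):(\eta,\xi)\in U(\gamma w)\}$ need not be convex, so a fixed point map on $V$ alone is not directly amenable to Theorem \ref{fpt}. To circumvent this I would work on the reflexive product space $Y:=V\times X^*$ ($V$ reflexive by assumption, $X^*$ reflexive since $1<p'<\infty$) and define $\Phi\colon Y\to 2^{Y}$ by
\[
  \Phi\big(w,(\eta,\xi)\big):=\big\{S(w,(\eta,\xi))\big\}\times U(\gamma w).
\]
A fixed point $(u,(\eta,\xi))\in\Phi(u,(\eta,\xi))$ satisfies $u=S(u,(\eta,\xi))$ and $(\eta,\xi)\in U(\gamma u)$, hence is precisely a weak solution of \eqref{eqn1} in the sense of Definition \ref{weaksol}. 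The decisive gain is that the values of $\Phi$ are now automatically convex: the first factor is a singleton (as $S$ is single valued) and the second is convex by Lemma \ref{lemma2}(i); they are likewise nonempty and closed.

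Next I would verify the remaining hypotheses of Theorem \ref{fpt} on $D:=\overline{B_V(0,R)}\times\overline{B_{X^*}(0,\widetilde R)}$, choosing $\widetilde R$ and then $R$ suitably. The growth bounds H($U_1$)(iv), H($U_2$)(iv) give $U(\gamma w)\subset \overline{B_{X^*}(0,\widetilde R)}$ for $\|w\|_V\le R$ once $\widetilde R$ exceeds a fixed multiple of $1+R^{p-1}$ [this is \eqref{eqns3.012}--\eqref{eqns3.013}], while testing the inequality defining $u=S(w,(\eta,\xi))$ with $v=0\in K(w)$ (Lemma \ref{lemma1}(i)) and using H($f$)(ii), the lower bound \eqref{eqns3.08} for $\varphi$, the constants \eqref{lamrho}, Proposition \ref{inesp}, and crucially the smallness condition H($0$) to absorb the critical $p$-homogeneous terms, yields $\|S(w,(\eta,\xi))\|_V\le R$ for $R$ large; this coercivity estimate, essentially the computation underlying Lemma \ref{lemmaes}, is the main technical step. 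The sequential closedness of $\graph\Phi$ in $Y_w\times Y_w$ then follows by combining the three lemmas: if $(w_n,(\eta_n,\xi_n))\weak(w,(\eta,\xi))$ and $\Phi(w_n,(\eta_n,\xi_n))\ni(u_n,(\zeta_n,\vartheta_n))\weak(u,(\zeta,\vartheta))$, then $\gamma w_n\to\gamma w$ in $X$ (compactness of $\gamma$), the complete continuity of $S$ (Lemma \ref{lemma3}) forces $u_n\to S(w,(\eta,\xi))$ so that $u=S(w,(\eta,\xi))$, and the strong--weak upper semicontinuity with closed values of $U$ (Lemma \ref{lemma2}) gives $(\zeta,\vartheta)\in U(\gamma w)$; hence the limit lies in $\Phi(w,(\eta,\xi))$. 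Theorem \ref{fpt} then supplies a fixed point, i.e.\ $\Upsilon\neq\emptyset$.

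Finally, for weak compactness of $\Upsilon$ I would show it is bounded and weakly sequentially closed, so that weak compactness follows from the reflexivity of $V$ (Eberlein--\v{S}mulian). Boundedness is Lemma \ref{lemmaes}. For weak closedness, let $u_n\in\Upsilon$ with $u_n\weak u$ in $V$; writing $u_n=S(u_n,(\eta_n,\xi_n))$ with $(\eta_n,\xi_n)\in U(\gamma u_n)$, the bounds \eqref{eqns3.012}--\eqref{eqns3.013} show $\{(\eta_n,\xi_n)\}$ is bounded in $X^*$, so along a subsequence $(\eta_n,\xi_n)\weak(\eta,\xi)$; exactly the closedness argument of the preceding paragraph (now with $w_n=u_n$, using $\gamma u_n\to\gamma u$, Lemma \ref{lemma2} and Lemma \ref{lemma3}) yields $(\eta,\xi)\in U(\gamma u)$ and $u=S(u,(\eta,\xi))$, i.e.\ $u\in\Upsilon$. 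Since every weakly convergent subsequence of a sequence in $\Upsilon$ has its limit in $\Upsilon$, the set $\Upsilon$ is weakly compact in $V$, completing the proof.
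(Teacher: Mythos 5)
Your proposal is correct and takes essentially the same approach as the paper: the paper's proof likewise applies the Kakutani--Ky Fan theorem (Theorem \ref{fpt}) to the map $\Xi(u,(\eta,\xi))=\big(S(u,(\eta,\xi)),\,U(\gamma u)\big)$ on the product space $V\times X^*$, using exactly your ingredients (Theorem \ref{Theorems1}, Lemmas \ref{lemma2}, \ref{lemma3}, \ref{lemmaes}), the same closed-graph argument via complete continuity of $S$ and strong--weak upper semicontinuity of $U$, and the same boundedness-plus-weak-closedness argument for weak compactness. The only cosmetic difference is that you obtain the invariant set $D$ by a direct self-map coercivity estimate, whereas the paper phrases the identical computation as a contradiction argument (testing with $v=0$ and invoking \textnormal{H($0$)}).
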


\begin{proof}
	{\bf Nonemptiness:}
	We consider the multivalued mapping $\Xi\colon V\times X^*\to 2^{V\times X^*}$ defined by
	\begin{align*}
		\Xi(u,(\eta,\xi)):=(S(u,(\eta,\xi)), U(\gamma u)),
	\end{align*}
	where $U$ is given in (\ref{defU}) and $\gamma u=(\gamma_1u,\gamma_2u)$ for all $u\in V$. From Theorem \ref{Theorems1} and Lemma \ref{lemma2}, we know that $\Xi$ has nonempty, bounded, closed and convex values. Observe that if $(u,(\eta,\xi))$ is a fixed point of $\Xi$, then we have $u=S(u,(\eta,\xi))$ and $(\eta,\xi)\in U(\gamma u)$. By the definitions of $S$ and $U$ it is not difficult to see that $u$ is also a weak solution of problem \eqref{eqn1}. Based on this fact, we are going to apply the Kakutani-Ky Fan fixed point theorem stated as Theorem \ref{fpt}, to verify that the fixed point set of $\Xi$ is nonempty.
	
	First, we claim that there exists a constant $M^*>0$ such that
	\begin{align}\label{bounedinc}
		S(\overline{B_V(0,M^*)},\ U(\gamma \overline{B_V(0,M^*)}))\subset \overline{B_V(0,M^*)},
	\end{align}
	where $\overline{B_V(0,M^*)}:=\{u\in V\,:\,\|u\|_V\le M^*\}$. Arguing by contradiction, suppose there is not a constant $M^*$ such that the inclusion holds.  Then for each $n>0$ there exist $w_n,z_n\in \overline{B_V(0,n)}$ and $(\eta_n,\xi_n)\in X^*$ with $(\eta_n,\xi_n)\in U(\gamma z_n)$ such that \begin{align*}
		u_n=S(w_n,(\eta_n,\xi_n)) \quad \text{and}\quad \|u_n\|_V>n.
	\end{align*}
	Hence, for every $n>0$, we have
	\begin{align*}
		&\int_\Omega\left(|\nabla u_n|^{p-2}\nabla u_n+\mu(x)|\nabla u_n|^{q-2}\nabla u_n\r)\cdot \nabla(v-u_n)\,\diff x\\
		&+\int_\Omega\left(| u_n|^{p-2} u_n+\mu(x)| u_n|^{q-2} u_n\r) (v-u_n)\,\diff x
		\\	&+\int_{\Gamma_3}\phi(x,v)\,\diff\Gamma
		-\int_{\Gamma_3}\phi(x,u_n)\,\diff\Gamma\\
		&\ge \int_\Omega \eta_n(x)(v-u_n)\,\diff x
		+\int_{\Gamma_2} \xi_n(x)(v-u_n)\,\diff\Gamma
		+\int_\Omega f(x,u_n,\nabla u_n)(v-u_n)\,\diff x
	\end{align*}
	for all $v\in K(w_n)$. Choosing $v=0$ in the inequality above, we obtain
	\begin{align}\label{eqns3.019}
		\begin{split}
			&\int_\Omega\left(|\nabla u_n|^{p-2}\nabla u_n+\mu(x)|\nabla u_n|^{q-2}\nabla u_n\r)\cdot \nabla u_n+|u_n|^p+\mu(x)|u_n|^q\,\diff x\\
			&\le \int_{\Gamma_3}\phi(x,0)\,\diff\Gamma
			-\int_{\Gamma_3}\phi(x,u_n)\,\diff\Gamma
			+ \int_\Omega \eta_n(x)u_n\,\diff x
			+\int_{\Gamma_2} \xi_n(x)u_n\,\diff\Gamma\\
			&\quad +\int_\Omega f(x,u_n,\nabla u_n)u_n\,\diff x.
		\end{split}
	\end{align}
	Let $\varepsilon_5,\varepsilon_6>0$. Applying hypotheses \textnormal{H($U_1$)}(iv) and \textnormal{H($U_2$)}(iv) gives
	\begin{align} \label{eqns3.020}
		\begin{split}
			\int_\Omega \eta_n(x)u_n\,\diff x
			&\le \int_\Omega (\alpha_{U_1}(x)+\beta_{U_1}|z_n|^{\theta_3-1})|u_n|\,\diff x\\
			&\leq
			\begin{cases}
				\displaystyle\int_\Omega \l(\alpha_{U_1}(x)
				+\beta_{U_1}|z_n|^{p-1}\r)|u_n|\,\diff x&\text{if }\theta_3=p,\\[3ex]
				\displaystyle\int_\Omega \l(\l(\alpha_{U_1}(x)+c_{12}(\varepsilon_5)\r)|u_n|+\varepsilon_5|z_n|^{p-1}|u_n|\r)\,\diff x&\text{if }\theta_3<p,
			\end{cases}\\
			&\le
			\begin{cases}
				\displaystyle\|\alpha_{U_1}\|_{p',\Omega}\|u_n\|_{p,\Omega}
				+\beta_{U_1}\|z_n\|_{p,\Omega}^{p-1}\|u_n\|_{p,\Omega}&\text{if }\theta_3=p,\\
				\|\alpha_{U_1}(\cdot)+c_{12}(\varepsilon_5)\|_{p',\Omega}\|u_n\|_{p,\Omega}+\varepsilon_6\|z_n\|_{p,\Omega}^{p-1}\|u_n\|_{p,\Omega}&\text{if }\theta_3<p,
			\end{cases}
		\end{split}
	\end{align}
	for some $c_{12}(\varepsilon_5)>0$, and
	\begin{align}\label{eqns3.021}
		\begin{split}
			\int_{\Gamma_2} \xi_n(x)u_n\,\diff\Gamma
			&\le \int_{\Gamma_2} \l(\alpha_{U_2}(x)+\beta_{U_2}|z_n|^{\theta_4-1}\r)|u_n|\,\diff\Gamma\\
			&\leq
			\begin{cases}
				\displaystyle\int_{\Gamma_2} \l(\alpha_{U_2}(x)+\beta_{U_2}|z_n|^{p-1}\r)|u_n(x)|\,\diff\Gamma&\text{if } \theta_4=p,\\[3ex]
				\displaystyle\int_{\Gamma_2} \l(\alpha_{U_2}(x)+c_{13}(\varepsilon_6)\r)|u_n|+\varepsilon_6|z_n|^{p-1}|u_n|\,\diff\Gamma&\text{if }\theta_4<p,
			\end{cases}\\
			& \leq
			\begin{cases}
				\displaystyle\|\alpha_{U_2}\|_{p',{\Gamma_2}}\|u_n\|_{p,{\Gamma_2}}+\beta_{U_2}\|z_n\|_{p,{\Gamma_2}}^{p-1}\|u_n\|_{p,{\Gamma_2}}&\text{if } \theta_4=p,\\
				\|\alpha_{U_2}(\cdot)+c_{13}(\varepsilon_6)\|_{p',{\Gamma_2}}\|u_n\|_{p,{\Gamma_2}}+\varepsilon_5\|z_n\|_{p,{\Gamma_2}}^{p-1}\|u_n\|_{p,{\Gamma_2}}&\text{if }\theta_4<p,
			\end{cases}
		\end{split}
	\end{align}
	for some $c_{13}(\varepsilon_6)>0$. Taking  \eqref{eqns3.08}, \eqref{eqns3.09} and \eqref{eqns3.019}, \eqref{eqns3.020}, \eqref{eqns3.021} into account, we are able to find a constant $c_{14}>0$ which is independent of $n$ such that
	\begin{align*}
		\int_\Omega\left(|\nabla u_n|^{p-2}\nabla u_n+\mu(x)|\nabla u_n|^{q-2}\nabla u_n\r)\cdot \nabla u_n+|u_n|^p+\mu(x)|u_n|^q\,\diff x\le c_{14}
	\end{align*}
	for all $n>0$,
	where we have used the fact that $\|z_n\|_V\le n=\|u_n\|_V$.  Letting $n\to\infty$ for the inequality above, it leads to a contradiction. So, there exists a constant $M^*>0$ such that \eqref{bounedinc} is fulfilled.
	
	From hypotheses \textnormal{H($U_1$)(iv)} and \textnormal{H($U_2$)(iv)}, we can see that $U\colon X\to 2^{X^*}$ is a bounded mapping. Let $M_1>0$ be such that
	\begin{align*}
		\|U(\gamma \overline{B_V(0,M^*)})\|_{X^*}\le M_1.
	\end{align*}
	Now we introduce a bounded, closed and convex subset $D$ of $V\times X^*$ given by
	\begin{align*}
		D=\l\{(u,(\eta,\xi))\in V\times X^*\,:\,\|u\|_V\le M^*\text{ and }\|(\eta,\xi)\|_{X^*}\le M_1\r\}.
	\end{align*}
	From this  and \eqref{bounedinc} we see that $\Xi$ maps $D$ into itself.
	
	Next, we are going to prove that the graph of $\Xi$ is sequentially closed in $(V\times X^*)_w\times (V\times X^*)_w$. Let  $\{(w_n,(\eta_n,\xi_n))\}_{n\in\N}\subset V\times X^*$ and $\{(u_n,(\delta_n,\sigma_n))\}_{n\in\N}\subset V\times X^*$  be sequences such that $(\delta_n,\sigma_n)\in U(\gamma (w_n))$, $u_n=S(w_n,(\eta_n,\xi_n))$ and
	\begin{align*}
		(w_n,(\eta_n,\xi_n)&\wto(w,(\eta,\xi))\quad \text{in }V\times X^* \quad \text{as }n\to\infty\\
		(u_n,(\delta_n,\sigma_n))&\wto(u,(\delta,\sigma))\quad \text{in }V\times X^*\quad \text{as }n\to\infty
	\end{align*}
	for some $(w,(\eta,\xi))\in V\times X^*$ and $(u,(\delta,\sigma))\in V\times X^*$. From Lemma \ref{lemma3} we know that
	\begin{align*}
		u_n=S(w_n,(\eta_n,\xi_n))\to S(w,(\eta,\xi))\quad \text{in } V\times X^*\quad \text{as }n\to\infty.
	\end{align*}
	This means that $u=S(w,(\eta,\xi))$. Recall that $U$ is strongly-weakly u.s.c.\,with boun\-ded, closed and convex values (see Lemma \ref{lemma2}), it allows us to apply Theorem 1.1.4 of  Kamenskii-Obukhovskii-Zecca \cite{Kamenskii-Obukhovskii-Zecca-2001} to conclude that the graph of $U$ is strongly-weakly closed. The latter combined with the compactness of $\gamma$ implies that the graph of $u\mapsto U(\gamma u)$ is weakly-weakly closed. We conclude that the graph of $\Xi$ is closed in $(V\times X^*)_w\times (V\times X^*)_w$.
	
	Therefore, all conditions of Theorem~\ref{fpt} are verified. Using this theorem, we conclude that $\Xi$ has at least a fixed point, say $(u^*,(\eta^*,\xi^*))\in V\times X$.  Hence,  $u^*\in V$ is a weak solution of problem \eqref{eqn1}.
	
	{\bf Weak compactness.}
	The boundedness of the solution set $\Upsilon$ of problem \eqref{eqn1} is a direct consequence of Lemma \ref{lemmaes}. Since $V$ is reflexive, we shall verify  the weak closedness of the solution set $\Upsilon$ of problem \eqref{eqn1}.  Let $u$ be a solution of problem \eqref{eqn1}.  Then, by the definitions of weak solutions (see Definition~\ref{weaksol}) and of $\Xi$, there exist  $(\eta,\xi)\in X^*$ such that $u=S(u,(\eta,\xi))$ and $(\eta,\xi)\in U(\gamma u)$, that is, $(u,(\eta,\xi))\in \Xi(u,(\eta,\xi))$. Let $\{u_n\}_{n\in\N}$ be a sequence of solutions to problem \eqref{eqn1} such that $u_n\wto u$ in $V$ as $n\to\infty$. Hence, we are able to find a sequence  $\{(\eta_n,\xi_n)\}_{n\in\N}\subset X^*$ such that $(u_n,(\eta_n,\xi_n))\in \Xi(u_n,(\eta_n,\xi_n))$. Recall that $U$ is a bounded mapping, it follows that $\{(\eta_n,\xi_n)\}_{n\in\N}$ is bounded in $X^*$. Passing to a subsequence if necessary, we may suppose that $(\eta_n,\xi_n)\wto (\eta,\xi)$ in $X^*$ for some $(\eta,\xi)\in X^*$. Recall that $\Xi$ is weakly sequentially closed,  $(u_n,(\eta_n,\xi_n))\in \Xi(u_n,(\eta_n,\xi_n))$ and $(u_n,(\eta_n,\xi_n))\wto (u,(\eta,\xi))$ in $V\times X^*$, it holds  $(u,(\eta,\xi))\in \Xi(u,(\eta,\xi))$. This means that $u$ is a solution to problem \eqref{eqn1}. Consequently, the solution set of problem \eqref{eqn1} is weakly  compact.
\end{proof}

We end this section by considering special cases of problem \eqref{eqn1}.

If  $J(u)\equiv+\infty$  for all $u\in W^{1,\mathcal H}(\Omega)$, then problem \eqref{eqn1} becomes the non-obstacle mixed boundary value problem (\ref{eqns5}). A careful observation gives the following corollary.

\begin{corollary}\label{corollary}
	Let $p\ge 2$. Assume that \eqref{conditions-p-q-mu}, \textnormal{H($f$)}, \textnormal{H($\phi$)}, \textnormal{H($U_1$)}, \textnormal{H($U_2$)} and \textnormal{H($0$)} are satisfied. Then, the solution set of problem (\ref{eqns5}) is nonempty and weakly compact in $V$.
\end{corollary}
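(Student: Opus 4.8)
The plan is to deduce this corollary from the apparatus already developed for the main Theorem~\ref{maintheorem}, specializing to the degenerate choice $J\equiv+\infty$. First I would observe that when $J(u)\equiv+\infty$ for every $u\in V$, the constraint set defined in \eqref{defK} becomes
\begin{align*}
	K(u)=\{v\in V\,:\,L(v)\le J(u)\}=V\quad\text{for all }u\in V,
\end{align*}
independently of $L$. Hence problem \eqref{eqn1} collapses to the non-obstacle mixed boundary value problem \eqref{eqns5}: a function $u\in V$ is a weak solution precisely when the variational inequality of Definition~\ref{weaksol} holds for all $v\in V$. In particular the hypotheses H($L$) and H($J$) play no role, which is exactly why they are absent from the statement of the corollary.

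Next I would verify that each auxiliary result invoked in the proof of Theorem~\ref{maintheorem} survives with $K\equiv V$. The three assertions of Lemma~\ref{lemma1} become trivial: $V$ is closed and convex and contains $0$; its graph is all of $V\times V$, hence sequentially closed in $V_w\times V_w$; and part (iii) holds with the constant sequence $v_n=v$. The well-definedness and uniqueness of the solution map $S$ is then furnished by the corollary following Theorem~\ref{Theorems1} for problem \eqref{eqns3.011}, which requires only \eqref{conditions-p-q-mu}, H($f$), H($\phi$) and the inequality $\max\{e_f,h_f\lambda^{\frac{1}{p}}\}<k(p)$ contained in H($0$). Since $K$ no longer depends on its first argument, $S(w,(\eta,\xi))$ is independent of $w$. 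The complete continuity of $S$ is then Lemma~\ref{lemma3}: its proof carries over verbatim because every appeal to Lemma~\ref{lemma1}(ii)--(iii) is now satisfied by the constant choices $y_n=u$ and $v_n=v$, so the $(\Ss_+)$-argument and the passage to the limit in the variational inequality are unchanged. Likewise the a priori bound of Lemma~\ref{lemmaes} persists, since its proof uses only $0\in K(u)$ (true here) together with H($f$), H($\phi$), H($U_1$), H($U_2$) and H($0$).

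With these ingredients in hand, nonemptiness follows by replaying the Kakutani--Ky Fan argument of Theorem~\ref{maintheorem}. I would set $\Xi(u,(\eta,\xi)):=\big(S(w,(\eta,\xi)),\,U(\gamma u)\big)$ on $V\times X^*$, where the first coordinate is now independent of $w$; by Theorem~\ref{Theorems1} and Lemma~\ref{lemma2} it has nonempty, bounded, closed and convex values, and a fixed point of $\Xi$ yields a weak solution of \eqref{eqns5}. The self-map property on a large set $\overline{B_V(0,M^*)}\times\{(\eta,\xi)\in X^*\,:\,\|(\eta,\xi)\|_{X^*}\le M_1\}$ follows from the same testing-with-$v=0$ estimate, using $\|z_n\|_V\le n=\|u_n\|_V$, and the graph of $\Xi$ is weakly sequentially closed by combining Lemma~\ref{lemma3} with the strong-weak upper semicontinuity of $U$ and the compactness of $\gamma$. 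Theorem~\ref{fpt} then produces a fixed point. Weak compactness is obtained exactly as before: boundedness of the solution set from Lemma~\ref{lemmaes} and weak closedness from the weak-weak closedness of $\graph(\Xi)$. The step requiring the most care is not any new estimate but the bookkeeping of the preceding paragraph, namely confirming that every invocation of H($L$) and H($J$) throughout Lemmas~\ref{lemma1}--\ref{lemmaes} and Theorem~\ref{maintheorem} is genuinely vacuous once $K\equiv V$, so that the reduced hypothesis set of the corollary suffices; no essentially new analysis is needed.
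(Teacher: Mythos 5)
Your proposal is correct and takes essentially the same route as the paper: the paper's entire justification of Corollary~\ref{corollary} is the remark that setting $J\equiv+\infty$ turns the constraint set $K$ into all of $V$, so that Lemmas~\ref{lemma1}--\ref{lemmaes} and the fixed-point argument of Theorem~\ref{maintheorem} go through with H($L$) and H($J$) rendered vacuous. Your write-up simply spells out the ``careful observation'' the paper leaves implicit, including the correct point that one must re-verify the lemmas with $K\equiv V$ rather than cite Theorem~\ref{maintheorem} as a black box, since $J\equiv+\infty$ does not literally satisfy H($J$).
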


Next, we assume the following conditions.

\begin{enumerate}
	\item[\textnormal{H($j_1$):}]
	$j_1\colon \Omega\times \R\to \R$ and $r_1\colon\R\to \R$ are functions which satisfy the following conditions:
	\begin{enumerate}
		\item[\textnormal{(i)}]
		the function $x\mapsto j_1(x,s)$ is measurable in $\Omega$ for all $s\in\R$ with $x\mapsto j_1(x,0)$ belonging to $L^1(\Omega)$;
		\item[\textnormal{(ii)}]
		for a.\,a.\,$x\in \Omega$, $s\mapsto j_1(x,s)$ is locally Lipschitz continuous and $r_1$ is continuous;
		\item[\textnormal{(iii)}]
		there exist $\theta_3\in[1,p]$, $\alpha_{j_1}\in L^{p'}(\Omega)_+$ and $\beta_{j_1}>0$ such that for all $s\in\R$ and a.\,a.\,$x\in \Omega$, we have
		\begin{equation*}
			|r_1(s)\xi| \le \alpha_{j_1}(x)+\beta_{j_1}|s|^{\theta_3-1}
		\end{equation*}
		for all $\xi\in \partial j_1(x,s)$.
	\end{enumerate}
\end{enumerate}

\begin{enumerate}
	\item[\textnormal{H($j_2$):}]
	$j_2\colon \Gamma_2\times \R\to \R$ and $r_2\colon\R\to \R$ are functions which satisfy the following conditions:
	\begin{enumerate}
		\item[\textnormal{(i)}]
		the function $x\mapsto j_2(x,s)$ is measurable on $\Gamma_2$ for all $s\in\R$ with $x\mapsto j_2(x,0)$ belonging to $L^1(\Gamma_2)$;
		\item[\textnormal{(ii)}]
		for a.\,a.\,$x\in \Gamma_2$, $s\mapsto j_2(x,s)$ is locally Lipschitz continuous and $r_2$ is continuous;
		\item[\textnormal{(ii)}]
		there exist $\theta_4\in[1,p]$, $\alpha_{j_2}\in L^{p'}(\Gamma_2)_+$ and $\beta_{j_2}>0$ such that for all $s\in\R$ and a.\,a.\,$x\in \Gamma_2$, we have
		\begin{equation*}
			|r_2(s)\eta| \le \alpha_{j_2}(x)+\beta_{j_2}|s|^{\theta_4-1}
		\end{equation*}
		for all $\eta\in \partial j_2(x,s)$.
	\end{enumerate}
\end{enumerate}

If $U_1$ and $U_2$ are given by $U_1(x,s)=r_1(s)\partial j_1(x,s)$ for a.\,a.\,$x\in \Omega$, $s\in\R$ and $U_2(x,s)=r_2(s)\partial j_2(x,s)$ for a.\,a.\,$x\in\Gamma_2$, $s\in\R$, then problem \eqref{eqn1} becomes problem \eqref{eqns2}. We have the following result.

\begin{theorem}\label{theorem-special-case}
	Let $p\ge 2$. Assume that \eqref{conditions-p-q-mu}, \textnormal{H($f$)}, \textnormal{H($\phi$)}, \textnormal{H($L$)}, \textnormal{H($J$)}, \textnormal{H($j_1$)}, \textnormal{H($j_2$)} and \textnormal{H($0$)} are satisfied. Then, the solution set of problem \eqref{eqns2} is nonempty and weakly compact in $V$.
\end{theorem}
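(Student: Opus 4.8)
The plan is to recognize problem \eqref{eqns2} as the instance of \eqref{eqn1} in which $U_1(x,s)=r_1(s)\partial j_1(x,s)$ and $U_2(x,s)=r_2(s)\partial j_2(x,s)$, and then to invoke Theorem~\ref{maintheorem}. Since \eqref{conditions-p-q-mu}, \textnormal{H($f$)}, \textnormal{H($\phi$)}, \textnormal{H($L$)}, \textnormal{H($J$)} and \textnormal{H($0$)} are assumed outright, the only thing left is to check that these specific choices of $U_1$ and $U_2$ satisfy \textnormal{H($U_1$)} and \textnormal{H($U_2$)}; once this is done, all hypotheses required in the proof of Theorem~\ref{maintheorem} (those of Lemmas~\ref{lemma2}, \ref{lemma3} and \ref{lemmaes}) are in force and the nonemptiness and weak compactness of the solution set of \eqref{eqns2} follow immediately. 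I would verify \textnormal{H($U_1$)} in full and note that \textnormal{H($U_2$)} is handled identically with $\Omega$ replaced by $\Gamma_2$.

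The two structure-and-growth conditions are routine. For \textnormal{H($U_1$)(i)}, Proposition~\ref{P1}(iii) says $\partial j_1(x,s)$ is a nonempty, convex, weakly$^*$ compact (hence bounded and closed) subset of $\R$, and multiplication by the scalar $r_1(s)$ preserves each of these properties, so $U_1(x,s)$ is nonempty, bounded, closed and convex. For \textnormal{H($U_1$)(iv)} I would simply set $\alpha_{U_1}:=\alpha_{j_1}$ and $\beta_{U_1}:=\beta_{j_1}$; then \textnormal{H($j_1$)(iii)}, which bounds $|r_1(s)\xi|$ for every $\xi\in\partial j_1(x,s)$, is exactly the desired estimate $|U_1(x,s)|\le\alpha_{U_1}(x)+\beta_{U_1}|s|^{\theta_3-1}$.

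The substantive points are measurability \textnormal{H($U_1$)(ii)} and upper semicontinuity \textnormal{H($U_1$)(iii)}, and I expect the latter to be the crux. For measurability the fixed scalar $r_1(s)$ is immaterial, so it suffices that $x\mapsto\partial j_1(x,s)$ be measurable, which follows from the measurability of $x\mapsto j_1(x,s)$ in \textnormal{H($j_1$)(i)} together with the local Lipschitz continuity in \textnormal{H($j_1$)(ii)}. For the upper semicontinuity of $s\mapsto U_1(x,s)$ I would argue via closed graph and local boundedness: if $s_n\to s$ and $\zeta_n=r_1(s_n)\xi_n$ with $\xi_n\in\partial j_1(x,s_n)$, then \textnormal{H($j_1$)(iii)} keeps $\{\xi_n\}$ bounded, so along a subsequence $\xi_n\to\xi$, and Proposition~\ref{P1}(v) forces $\xi\in\partial j_1(x,s)$; since $r_1$ is continuous, $\zeta_n\to r_1(s)\xi\in U_1(x,s)$. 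For multivalued maps into $\R$ with compact values this closedness, combined with the local boundedness coming from \textnormal{H($j_1$)(iii)}, is equivalent to upper semicontinuity. The delicate feature is that $r_1$ may change sign, so one cannot treat the scalar factor as a fixed positive constant; it is precisely the continuity of $r_1$ together with the compactness of $\partial j_1(x,s)$ that lets the product map inherit the closed graph and hence remain upper semicontinuous. With \textnormal{H($U_1$)} and, by the same reasoning, \textnormal{H($U_2$)} established, Theorem~\ref{maintheorem} applies and completes the proof.
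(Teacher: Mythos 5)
Your overall route is exactly the paper's: treat \eqref{eqns2} as the instance of \eqref{eqn1} with $U_1(x,s)=r_1(s)\partial j_1(x,s)$ and $U_2(x,s)=r_2(s)\partial j_2(x,s)$, verify \textnormal{H($U_1$)} and \textnormal{H($U_2$)}, and invoke Theorem~\ref{maintheorem}; your verifications of conditions (i), (ii) and (iv) match the paper's. There is, however, one flawed justification in your upper semicontinuity argument, and it occurs precisely at the point you yourself flag as delicate. You claim that \textnormal{H($j_1$)(iii)} keeps $\{\xi_n\}_{n\in\N}$ bounded, where $\zeta_n=r_1(s_n)\xi_n$, $\xi_n\in\partial j_1(x,s_n)$ and $s_n\to s$. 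But \textnormal{H($j_1$)(iii)} bounds only the products $|r_1(s_n)\xi_n|$, not the subgradients $\xi_n$ themselves; when $r_1(s_n)\to r_1(s)=0$ (the vanishing or sign-changing situation you are worried about) it gives no control whatsoever on $\xi_n$. Moreover, this is exactly the case where boundedness of $\{\xi_n\}_{n\in\N}$ is genuinely needed: if $r_1(s)\neq 0$ one can simply write $\xi_n=\zeta_n/r_1(s_n)$ for large $n$ and pass to the limit using Proposition~\ref{P1}(v), whereas if $r_1(s)=0$ one must show $\zeta_n\to 0$, which requires $\{\xi_n\}_{n\in\N}$ bounded together with $r_1(s_n)\to 0$; a bounded product, which is all that \textnormal{H($j_1$)(iii)} yields, does not suffice for this.

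The fix is the one the paper uses: the boundedness of $\{\xi_n\}_{n\in\N}$ comes from the local Lipschitz continuity of $s\mapsto j_1(x,s)$, via Proposition~\ref{P1}(iii) and (v). Since $s_n\to s$, for $n$ large all $s_n$ lie in a neighborhood of $s$ on which $j_1(x,\cdot)$ is Lipschitz with some constant $L_x$, and Proposition~\ref{P1}(iii) then gives $|\xi_n|\le L_x$. With this substitution, the remainder of your argument (subsequential convergence $\xi_n\to\xi$, closedness of the graph of $\partial j_1(x,\cdot)$ from Proposition~\ref{P1}(v), continuity of $r_1$, hence $\zeta_n\to r_1(s)\xi\in U_1(x,s)$, and the analogous reasoning for $U_2$) goes through and coincides with the paper's proof; everything else in your proposal is fine.
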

\begin{proof}
	It is obvious that the conclusion is a direct consequence of  Theorem \ref{maintheorem}. So, we have to verify that the functions $U_1$ and $U_2$, defined by $U_1(x,s)=r_1(s)\partial j_1(x,s)$ for a.\,a.\,$x\in \Omega$, $s\in\R$ and $U_2(x,s)=r_2(s)\partial j_2(x,s)$ for a.\,a.\,$x\in\Gamma_2$, $s\in\R$, fulfill hypotheses \textnormal{H($U_1$)} and \textnormal{H($U_2$)}, respectively.
	
	It follows from Proposition~\ref{P1} that for a.\,a.\,$x\in \Omega$ (resp. for a.a. $x\in\Gamma_2$) and all $s\in\R$ the set $U_1(x,s)$ (resp. $U_2(x,s)$) is nonempty, bounded, closed and convex in $\R$, namely, conditions \textnormal{H($U_1$)} and \textnormal{H($U_2$)} are satisfied. Hypotheses \textnormal{H($j_1$)}(i) and \textnormal{H($j_2$)}(ii) indicate that for all $s\in\R$, the  functions $x\mapsto U_1(x,s)=r_1(s)$ $\partial j_1(x,s)$ and $x\mapsto U_2(x,s)=r_2(s)\partial j_2(x,s)$ are measurable in $\Omega$ and on $\Gamma_2$, respectively. This means that \textnormal{H($U_1$)(i)} and \textnormal{H($U_2$)(i)} hold.
	
	We claim that $s\mapsto r_1(s)\partial j_1(x,s)$ is u.s.c. From Proposition~\ref{uscproposition}, it is sufficient to show that $(r_1(\cdot)\partial j_1(x,\cdot))^-(D)$ is closed for each closed set $D\subset \R$. Let $\{s_n\}_{n\in\N}\subset(r_1(\cdot)\partial j_1(x,\cdot))^-(D) $ be such that $s_n\to s$ as $n\to\infty$. Then, there exists a sequence $\{\eta_n\}_{n\in\N}\subset \R$ satisfying $\eta_n\in r_1(s_n)\partial j_1(x,s_n)\cap D$ for each $n\in\mathbb N$. We are able to find a sequence $\{\xi_n\}_{n\in\N}$ such that $\eta_n=r_1(s_n)\xi_n$ and $\xi_n\in\partial j_1(x,s_n)$ for all $n\in \mathbb N$ and for a.\,a.\,$x\in \Omega$. Recall that $s_n\to s$, we can apply Proposition~\ref{P1}(iii) and (v) to conclude that $\{\xi_n\}_{n\in\N}$ is bounded in $\R$. Hence, without any loss of generality, we may assume that $\xi_n\to \xi$ in $\R$ as $n\to\infty$ for some $\xi\in D$, because of the closedness of $D$. But, the closedness of $\partial j_1$ (see Proposition \ref{P1}(v)) admits that $\xi\in \partial j_1(x,s)$. This combined with the continuity of $r_1$ deduces that $\eta_n=r_1(s_n)\xi_n\to r_1(s)\xi\in r_1(s)\partial j_1(x,s)$. This implies that $s\in (r_1(\cdot)\partial j_1(x,\cdot))^-(D) $, that is, $(r_1(\cdot)\partial j_1(x,\cdot))^-(D) $ is closed. Applying Proposition \ref{uscproposition} we see that $s\mapsto r_1(s)\partial j_1(x,s)$ is u.s.c. Using the same arguments as before, we can also show that $s\mapsto r_2(s)\partial j_2(x,s)$ is u.s.c. Therefore, \textnormal{H($U_1$)}(iii) and \textnormal{H($U_2$)}(iii) are verified.
	
	Finally, hypotheses \textnormal{H($U_1$)(iv)} and \textnormal{H($U_2$)(iv)} are the consequences of the assumptions \textnormal{H($j_1$)}(iii) and \textnormal{H($j_2$)}(iii). Consequently, we apply Theorem \ref{maintheorem} to obtain the desired conclusion.
\end{proof}

\section*{Acknowledgment}

This project has received funding from the NNSF of China Grant Nos. 12001478, 12026255 and 12026256, and the European Union's Horizon 2020 Research and Innovation Programme under the Marie Sklodowska-Curie grant agreement No. 823731 CONMECH, National Science Center of Poland under Preludium Project No. 2017/25/N/ST1/00611,  the Startup Project of Doctor Scientific Research of Yulin Normal University No. G2020ZK07, and the Natural Science Foundation of Guangxi.  The research of Vicen\c tiu D. R\u adulescu was supported by a grant of the Romanian Ministry of Research, Innovation and Digitization, CNCS/CCCDI--UEFISCDI, project number PCE 137/2021, within PNCDI III.

\end{document}